\newtheorem{theorem}{Theorem}[section]
\newtheorem{lemma}[theorem]{Lemma}
\theoremstyle{definition}
\theoremstyle{remark}
\newtheorem{remark}[theorem]{Remark}
\numberwithin{equation}{section}
\definecolor{darkred}{rgb}{.7,0,0}
\definecolor{grey}{rgb}{.7,.6,.5}
\newcommand{\agree}[1]{{\color{black}{#1}}}
\newcommand\mlremove{\bgroup\markoverwith
{\textcolor{red}{\rule[.5ex]{2pt}{1pt}}}\ULon}
\newcommand\asremove{\bgroup\markoverwith
{\textcolor{cyan}{\rule[.5ex]{2pt}{1pt}}}\ULon}
\newcommand\agreeremove{\bgroup\markoverwith
{\textcolor{blue}{\rule[.5ex]{2pt}{1pt}}}\ULon}
\newcommand{\scaleeps}{\varepsilon}
\newcommand{\epsm}{\epsilon}
\newcommand{\epsapprox}{\delta}
\newcommand{\cg}{\mathcal{G}}
\newcommand{\ER}{\mathcal{E}_R}
\newcommand{\EG}{\mathcal{E}_G}
\newcommand{\cL}{\mathcal{L}}
\newcommand{\cI}{\mathcal{I}}
\newcommand{\Tm}{T_{\rm{max}}}
\newcommand{\cM}{\mathcal{M}}
\newcommand{\ms}{m^\ast}
\newcommand{\ts}{\theta^\ast_{\infty}}
\newcommand{\tst}{\theta^\ast_{T}}
\newcommand{\Ai}{A_{\infty}}
\newcommand{\AT}{A_{\scriptsize{T}}}
\newcommand{\bi}{b_{\infty}}
\newcommand{\bT}{b_{\scriptsize{T}}}
\newcommand{\erm}{$\square$}
\providecommand{\mathbbm}{\mathbb} 
\newcommand{\R}{\mathbbm{R}}
\newcommand{\N}{\mathbbm{N}}
\newcommand{\cA}{\mathcal{A}}
\newcommand{\sX}{\mathsf{X}}
\newcommand{\sY}{\mathsf{Y}}
\newcommand{\bbR}{\mathbb{R}}
\newcommand{\Expect}{\operatorname{\mathbb{E}}}
\DeclareMathOperator*{\argmin}{argmin} 
\newtheorem{assumption}{Assumption}
\begin{document}

\setcitestyle{numbers,open={[},close={]}} 
\bibliographystyle{plainnat}

\title[Machine Learning of Model Error]{A Framework for Machine Learning of\\ Model Error in Dynamical Systems}


\author{Matthew E. Levine}
\address{Dept. of Computing and Mathematical Sciences\\
California Institute of Technology\\
Pasadena, CA 91125, USA}
\email{mlevine@caltech.edu}
\thanks{The authors are grateful to David Albers, Oliver Dunbar,
Ian Melbourne, Fei Lu, Ivan D. Jimenez Rodriguez, and Yisong Yue for helpful discussions.
The work of MEL and AMS was supported by NIH RO1 LM012734
``Mechanistic Machine Learning''.
MEL is also supported by the National Science Foundation Graduate Research Fellowship under Grant No. DGE‐1745301.
AMS is also supported by NSF (award AGS-1835860), NSF (award DMS-1818977),
the Office of Naval Research (award N00014-17-1-2079),
and the AFOSR under MURI award number FA9550-20-1-0358
(Machine Learning and Physics-Based Modeling and Simulation).
The computations presented here were conducted in the Resnick High Performance Center, a facility supported by Resnick Sustainability Institute at the California Institute of Technology.}

\author{Andrew M. Stuart}
\address{Dept. of Computing and Mathematical Sciences\\
California Institute of Technology\\
Pasadena, CA 91125, USA}
\email{astuart@caltech.edu}

\subjclass[2020]{Primary 68T30, 37A30, 37M10; Secondary 37M25, 41A30}

\date{July 13, 2021.}


\keywords{Dynamical Systems, Model Error, Statistical Learning, Random Features, Recurrent Neural Networks, Reservoir Computing.}

\begin{abstract}

The development of data-informed predictive models for dynamical systems is of widespread interest in many disciplines.
We present a unifying framework for
blending mechanistic and machine-learning approaches
to identify dynamical systems from noisily and partially observed data.
We compare pure data-driven learning with hybrid
models which incorporate imperfect domain knowledge,
referring to the discrepancy between an assumed truth model
and the imperfect mechanistic model as {\em model error.}
Our formulation is agnostic to the chosen machine learning model,
is presented in both continuous- and discrete-time settings,
and is compatible both with model errors that exhibit substantial memory and errors that are memoryless.

First, we study memoryless linear (w.r.t. parametric-dependence) model error
from a learning theory perspective, defining excess risk and generalization error.
For ergodic continuous-time systems, we prove that
both excess risk and generalization error
are bounded above by terms that diminish with the
square-root of $T$, the time-interval over which training data is specified.

Secondly, we study scenarios that benefit from modeling with memory, proving
universal approximation theorems for two classes
of continuous-time recurrent neural networks (RNNs): both
can learn memory-dependent
model error, assuming that it is governed
by a finite-dimensional hidden variable and that,
together, the observed and hidden variables form
a continuous-time Markovian system.
In addition, we connect one class of RNNs
to reservoir computing, thereby relating learning of memory-dependent error to recent work
on supervised learning between Banach spaces using random features.

Numerical results are presented (Lorenz '63, Lorenz '96 Multiscale systems) to compare purely data-driven and hybrid approaches,
finding hybrid methods less data-hungry and more parametrically efficient.
We also find that, while a continuous-time framing allows for robustness
to irregular sampling and desirable domain-interpretability,
a discrete-time framing can provide similar or better predictive performance,
especially when data are undersampled and the vector field
defining the true dynamics cannot be identified.
Finally, we demonstrate numerically how data assimilation can be leveraged to learn hidden dynamics from noisy, partially-observed data,
and illustrate challenges in representing memory by this approach,
and in the training of such models.

\end{abstract}

\maketitle


\section{Introduction}
\label{sec:intro}

\subsection{Background and Literature Review}
\label{ssec:back}

The modeling and prediction of dynamical systems and time-series is
an important goal in numerous domains, including biomedicine, climatology,
robotics, and the social sciences.
Traditional approaches to modeling these systems appeal to careful study of their mechanisms,
and the design of targeted equations to represent them.
These carefully built {\em mechanistic models}
have impacted humankind in numerous
arenas, including our ability to land spacecraft on celestial bodies,
provide high-fidelity numerical weather prediction, and artificially
regulate physiologic processes, through the use of pacemakers and
artificial pancreases, for example.
This paper focuses on the learning of {\em model error}:
we assume that an imperfect mechanistic model is known, and that
data are used to improve it. We introduce a framework for this problem,
focusing on distinctions between Markovian and non-Markovian
model error, providing a unifying review of relevant literature,
developing some underpinning theory related to both
the Markovian and non-Markovian settings, and presenting
numerical experiments which illustrate our key findings.

To set our work in context, we
first review the use of data-driven methods for time-dependent
problems, organizing the literature review around
four themes comprising \Cref{sssec:1,sssec:2,sssec:3,sssec:4};
these are devoted, respectively,
to pure data-driven methods, hybrid methods that build
on mechanistic models, non-Markovian models that describe memory,
and applications of the various approaches. Having set the
work in context, in \Cref{ssec:cont} we detail the
contributions we make, and describe the
organization of the paper.

\subsubsection{Data-Driven Modeling of Dynamical Systems}
\label{sssec:1}

A recent wave of machine learning successes in data-driven
modeling, especially in imaging sciences,
has shown that we can demand even more from existing models, or that
we can design models of more complex phenomena than heretofore.
Traditional models built from, for example, low order polynomials
and/or linearized model reductions, may appear limited
when compared to the flexible function approximation frameworks
provided by neural networks and kernel methods.
Neural networks, for example, have a long history of success in modeling dynamical systems
\citep{narendra_neural_1992,gonzalez-garcia_identification_1998,krischer_model_1993,rico-martinez_continuous-time_1994,rico-martines_discrete-_1993,rico-martinez_discrete-_1992,lagaris_artificial_1998}
and recent developments in deep learning for operators continue to propel this trend \citep{lu_deeponet_2020,bhattacharya_model_2020,li_markov_2021,li_fourier_2021}.

The success of neural networks arguably relies on
balanced expressivity and generalizability, but
other methods also excel in learning parsimonious and
generalizable representations of dynamics. A particularly
popular methodology is to perform
sparse regression over a dictionary of
vector fields, including the use of thresholding approaches (SINDy)
\citep{brunton_discovering_2016} and $L_1$-regularized polynomial regression \citep{tran_exact_2017,schaeffer_learning_2017,schaeffer_extracting_2018,schaeffer_extracting_2020}.
Non-parametric methods, like Gaussian process models \citep{rasmussen_gaussian_2006},
have also been used widely for modeling nonlinear dynamics
\citep{wang_gaussian_2005,frigola_variational_2014,kocijan_modelling_2016,chkrebtii_bayesian_2016}.
While a good choice of kernel is often essential for the success of these methods, recent progress has been made towards automatic hyperparameter tuning via parametric kernel flows \citep{hamzi_learning_2021}.
Successes with Gaussian process models were also extended
to high dimensional problems by using random feature map
approximations \citep{rahimi_random_2008}
within the context of data-driven learning of parametric
partial differential equations (PDEs)
and solution operators \citep{nelsen_random_2020}.
Advancements to data-driven methods based on Koopman operator theory and
dynamic mode decomposition
also offer exciting new possibilities for predicting nonlinear dynamics
from data
\citep{tu_dynamic_2014, korda_data-driven_2020,alexander_operator-theoretic_2020}.

It is important to consider whether to model in discrete- or continuous-time, as both have potential advantages.
The primary positive for continuous-time modeling lies in its flexibility and
interpretability.
In particular, continuous-time approaches are more readily and naturally applied to irregularly sampled timeseries data, e.g. electronic health record data \citep{rubanova_latent_2019}, than discrete-time methods.
Furthermore, this flexibility with respect to timestep enables simple
transferability of a model learnt from discrete-time data at one timestep,
to new settings with a different timestep and indeed to variable timestep
settings; the learned right-hand-side can be used to generate numerical
solutions at any timestep. On the other hand, applying a discrete-time model to a new timestep either requires exact alignment of subsampled data or some post-processing interpolation step.
Continuous-time models may also provide greater interpretability than discrete-time methods when the right-hand-side of the ordinary differential
equation (ODE) is a more physically interpretable object than the $\Delta t$-solution operator (e.g. for equation discovery, \citep{kaheman_learning_2019}).

Traditional implementations of continuous-time learning require accurate estimation of time-derivatives of the state,
but this may be circumvented using approaches that leverage
autodifferentiation software \citep{ouala_learning_2020,rubanova_latent_2019,kahemanAutomaticDifferentiationSimultaneously2022}
or methods which learn from statistics derived from time-series, such as
moments or correlation functions \citep{schneider2021learning}.
\citet{keller_discovery_2021,du_discovery_2021} provide rigorous analysis demonstrating
how inference of a continuous-time model from discrete-time data must be
conducted with great care;
they prove how stable and consistent linear multistep methods for continuous-time integration
may not possess the same guarantees when used for the inverse problem, i.e. discovery of dynamics. \citet{queiruga_continuous--depth_2020}
provide pathological illustrations of this phenomenon in the context of Runge-Kutta methods.

Discrete-time approaches, on the other hand, are easily deployed when train and test data sample rates are the same.
For applications in which data collection is easily configured (e.g. simulated settings, available automatic sensors, etc.), discrete-time methods are typically much easier to implement and test than continuous-time methods.
Moreover, they allow for \enquote{non-intrusive} model correction, as additions are applied outside of the numerical integrator;
this may be relevant for practical integration with complex simulation software.
In addition, discrete-time approaches can be preferable when there is unavoidably large error in continuous-time inference \citet{chorin_discrete_2015,lu_comparison_2016}.

Both non-parametric and parametric
model classes are used in the learning of dynamical systems,
with the latter connecting to the former via the
representer theorem, when Gaussian process
regression \citep{rasmussen_gaussian_2006} is used
\citep{burov_kernel_2020,gilani_kernel-based_2021,harlim_machine_2021}.

\subsubsection{Hybrid Mechanistic and Data-Driven Modeling}
\label{sssec:2}

Attempts to transform domains that have relied on traditional
mechanistic models, by using
purely data-driven (i.e. \emph{de novo} or \enquote{learn from scratch}) approaches, often fall short.
Now, there is a growing recognition by machine learning researchers that these mechanistic models are very valuable \citep{miller_breimans_2021}, as they represent the distillation of centuries of data collected in countless studies
and interpreted by domain experts.
Recent studies have consistently found advantages of hybrid methods
that blend mechanistic knowledge and data-driven techniques;
\citet{willard_integrating_2021} provide a thorough review of this shift amongst scientists and engineers.
Not only do these hybrid methods improve predictive performance \citep{pathak_hybrid_2018},
but they also reduce data demands \citep{rackauckas_universal_2020}
and improve interpretability and trustworthiness,
which is essential for many applications. This is exemplified by work in
autonomous drone landing \citep{shi_neural_2018} and helicopter flying \citep{rackauckas_hybrid_2021},
as well as predictions for COVID-19 mortality risk \citep{sottile_real-time_2021} and COVID-19 treatment response \citep{qian_integrating_2021}.

The question of how best to use the power of data and machine learning to
leverage and build upon our existing mechanistic knowledge is thus
of widespread current interest.
This question and research direction has been anticipated over
the last thirty years of research activity at the interface
of dynamical systems with machine learning
\citep{rico-martinez_continuous-time_1994,wilson_generalised_1997,lovelett_partial_2020}, and now a critical mass of effort is developing.
A variety of studies have been tackling these questions in weather and climate modeling
\citep{kashinath_physics-informed_2021,farchi_using_2021};
even in the imaging sciences, where
pure machine learning has been spectacularly successful,
emerging work shows that incorporating knowledge of
underlying physical mechanisms improves
performance in a variety of image recognition tasks \citep{ba_blending_2019}.

As noted and studied by \citet{ba_blending_2019,freno_machine-learning_2019} and others, there are a few common high-level approaches for blending machine learning with mechanistic knowledge:
(1) use machine learning to learn additive residual corrections for the mechanistic model
\agree{\citep{saveriano_data-efficient_2017,shi_neural_2018,kaheman_learning_2019,harlim_machine_2021,farchi_using_2021,lu_data-based_2017,lu_data-driven_2020,yinAugmentingPhysicalModels2021}};
(2) use the mechanistic model as an input or feature for a machine learning model
\citep{pathak_hybrid_2018,lei_hybrid_2020,borra_effective_2020};
(3) use mechanistic knowledge in the form of a differential equation
as a final layer in a neural network representation of the
solution, or equivalently define the loss function to include
approximate satisfaction of the differential equation
\citep{raissi_physics-informed_2019,raissi_multistep_2018,chen_solving_2021,smith_eikonet_2020};
and (4) use mechanistic intuition to constrain or inform
the machine learning architecture
\citep{haber_stable_2017,maulik_reduced-order_2021}.
Many other successful studies have developed specific designs that further hybridize these and other perspectives
\citep{hamilton_hybrid_2017,freno_machine-learning_2019,yi_wan_bubbles_2020,jia_physics-guided_2021}.
In addition, parameter estimation for mechanistic models is a well-studied topic in data assimilation, inverse problems, and other mechanistic modeling communities, but recent approaches that leverage machine learning for this task may create new opportunities for
accounting for temporal parameter variations \citep{miller_learning_2020}
and unknown observation functions \citep{linial_generative_2021}.

An important distinction should be made between physics-informed \emph{surrogate} modeling and
what we refer to as \emph{hybrid} modeling.
Surrogate modeling primarily focuses on replacing high-cost, high-fidelity mechanistic model simulations
with similarly accurate models that are cheap to evaluate.
These efforts have shown great promise by training machine learning models on expensive high-fidelity simulation data,
and have been especially successful when the
underlying physical (or other domain-specific) mechanistic knowledge and equations are
incorporated into the model training \citep{raissi_physics-informed_2019} and architecture \citep{maulik_reduced-order_2021}.
We use the term hybrid modeling, on the other hand, to indicate when
the final learned system involves interaction (and possibly feedback) between
mechanism-based and data-driven models \citep{pathak_hybrid_2018}.

In this work, we focus primarily on hybrid methods that learn residuals to an imperfect mechanistic model.
We closely follow the discrete-time hybrid modeling framework developed by \cite{harlim_machine_2021},
while providing new insights from the continuous-time modeling perspective.
The benefits of this form of hybrid modeling, which we and many others have observed, are not yet fully understood in a theoretical sense.
Intuitively, nominal mechanistic models are most useful when they encode key nonlinearities that are not readily inferred
using general model classes and modest amounts of data.
Indeed, classical approximation theorems for fitting polynomials, fourier modes, and other common function bases directly reflect this relationship by bounding the error with respect to a measure of complexity of the target function (e.g. Lipschitz constants, moduli of continuity, Sobolev norms, etc.) \citep{devore_constructive_1993}[Chapter 7].
Recent work by \citet{e_priori_2019} provides a priori error bounds for two-layer neural networks and kernel-based regressions, with
constants that depend explicitly on the norm of the target function
in the model-hypothesis space (a Barron space and a reproducing kernel Hilbert space, resp.).
At the same time, problems for which mechanistic models only capture low-complexity trends (e.g. linear) may still be good candidates for hybrid learning (over purely data-driven), as an accurate linear model reduces the parametric burden for the machine-learning task; this effect is likely accentuated in data-starved regimes.
Furthermore, even in cases where data-driven models perform satisfactorily,
a hybrid approach may improve interpretability, trustworthiness, and controllability without sacrificing performance.

Hybrid models are often cast in Markovian, memory-free settings
where the learned dynamical system (or its learned residuals) are solely dependent on the
observed states. This approach can be highly effective when measurements of all
relevant states are available or when the influence of the unobserved states is adequately described by a function of the observables. This is the perspective employed by \citet{shi_neural_2018},
where they learn corrections to physical equations of motion for an autonomous vehicle
in regions of state space where the physics perform poorly---
these residual errors are \emph{driven} by un-modeled turbulence during landing, but can be \emph{predicted} using the observable states of the vehicle (i.e. position, velocity, and acceleration).
This is also the perspective taken in applications of high-dimensional multiscale dynamical systems,
wherein sub-grid closure models parameterize the effects of expensive fine-scale interactions (e.g. cloud simulations)
as functions of the coarse variables \citep{grabowski_coupling_2001,khairoutdinov_cloud_2001,tan_extended_2018,brenowitz_prognostic_2018,ogorman_using_2018,rasp_deep_2018,schneider2021learning,beucler_enforcing_2021}.
The result is a hybrid dynamical system with a physics-based equation defined on the coarse variables with a Markovian correction term that accounts for the effects of the expensive fine scale dynamics.

\subsubsection{Non-Markovian Data-Driven Modeling}
\label{sssec:3}

Unobserved and unmodeled processes are often responsible for
model errors that cannot be represented in a Markovian fashion within
the observed variables alone. This need has driven substantial advances
in memory-based modeling. One approach to this is the use of delay
embeddings \citep{takens_detecting_1981}.
These methods are inherently tied to discrete
time representations of the data and, although very successful in
many applied contexts, are of less value when the goal of
data-driven learning is to fit
continuous-time models; this is a desirable modeling goal in many settings.

An alternative to understanding memory is via
the Mori-Zwanzig formalism, which is a fundamental building
block in the presentation of memory and hidden variables and
may be employed for both discrete-time and continuous-time
models.
Although initially developed primarily in the context of
statistical mechanics, it provides the basis for understanding
hidden variables in dynamical systems, and thus underpins many
generic computational tools applied in this setting
\citep{chorin_optimal_2000,zhu_estimation_2018,gouasmi_priori_2017}. It
has been successfully applied to problems in fluid turbulence \citep{duraisamy_turbulence_2019,parish_dynamic_2017} and molecular dynamics \citep{li_computing_2017,hijon_morizwanzig_2010}.
\citet{lin_data-driven_2021} demonstrate connections between Mori-Zwanzig and delay embedding theory in the context of non-linear autoregressive models using Koopman operator theory.
Indeed, \citet{gilani_kernel-based_2021} shows a correspondence
between the Mori-Zwanzig representation of the Koopman operator
and Taken's delay-embedding flow map.
Studies by \citet{ma_model_2019,wang_recurrent_2020} demonstrate how the Mori-Zwanzig
formalism motivates the use of recurrent neural networks (RNNs)
\citep{rumelhart_learning_1986,goodfellow_deep_2016}
as a deep learning approach to non-Markovian closure modeling.
\citet{harlim_machine_2021} also use the Mori-Zwanzig formalism to deduce a non-Markovian closure model, and evaluate RNN-based approximations of the closure dynamics.
Closure modeling using RNNs has recently emerged as a new way to
learn memory-based closures
\citep{kani_dr-rnn_2017,chattopadhyay_data-driven_2020-1,harlim_machine_2021}.

Although the original formulation of Mori-Zwanzig as a general purpose
approach to modeling partially observed systems was in continuous-time
\citep{chorin_optimal_2000}, many practical implementations adopt a
discrete-time picture \citep{darve_computing_2009,chorin_discrete_2015,lin_data-driven_2021}.
This causes the learned memory terms to depend on sampling rates, which,
in turn, can inhibit flexibility and interpretability.

Recent advances in continuous-time memory-based modeling, however, may be applicable to these non-Markovian hybrid model settings.
The theory of continuous-time RNNs (i.e. formulated as differential equations, rather than a recurrence relation)
was studied in the 1990s \citep{funahashi_approximation_1993,beer_dynamics_1995}, albeit for equations with a specific
additive structure.
This structure was exploited in a continuous-time reservoir computing (RC) approach by \citet{lu_attractor_2018} for reconstructing chaotic attractors from data.
Comparisons between RNNs and RC (a subclass of RNNs with random parameters fixed in the recurrent state)
in discrete-time have yielded mixed conclusions in terms of their relative efficiencies and ability to retain memory \citep{pyle_domain-driven_2021,gauthierNextGenerationReservoir2021a,vlachas_backpropagation_2020,chattopadhyay_data-driven_2020}.
Recent formulations of continuous-time RNNs have departed slightly from the additive structure,
and have focused on constraints and architectures that ensure stability and accuracy of the
resulting dynamical system
\citep{chang_antisymmetricrnn_2019,erichson_lipschitz_2020,niu_recurrent_2019,rubanova_latent_2019,
sherstinsky_fundamentals_2020,ouala_learning_2020}.
In addition, significant theoretical work has been performed for linear RNNs in continuous-time \citep{li_curse_2020}.
Nevertheless, these various methods have not yet been formulated within a hybrid modeling framework, nor has their approximation power been
carefully evaluated in that context.
A recent step in this direction, however, is the work by \citet{gupta_neural_2021}, which tackles non-Markovian hybrid modeling in continuous-time with neural network-based delay differential equations (DDEs).

\subsubsection{Noisy Observations and Data Assimilation}
\label{ssec:noiseSetting}
For this work we consider settings in which the observations may be both
noisy and partial; the observations may be partial either because the system
is undersampled in time, or because certain variables are not observed at all. We emphasize
that ideas from statistics can be used to smooth and/or interpolate data
to remove noise and deal with undersampling \cite{craven1978smoothing}
and to deal with missing data \cite{meng1997algorithm};
and ideas from data assimilation
\cite{asch_data_2016,law_data_2015,reich_probabilistic_2015}
can be used to remove noise and to learn about
unobserved variables \agree{\cite{chenAutodifferentiableEnsembleKalman2021,
gottwald_supervised_2021,gottwaldCombiningMachineLearning2021}}.
In some of our experiments we will use noise-free data in continuous-time,
to clearly expose issues separate from noise/interpolation; but in other
experiments we will use methodologies from data assimilation to enhance
our learning \cite{chenAutodifferentiableEnsembleKalman2021}.

\subsubsection{Applications of Data-Driven Modeling}
\label{sssec:4}

In order to deploy hybrid methods in real-world scenarios,
we must also be able to cope with noisy, partial observations.
Accommodating the learning of model error in this setting,
as well as state estimation, is an active area of
research in the data assimilation (DA) community
\citep{pulido_stochastic_2018,farchi_using_2021,bocquet_bayesian_2020}.
Learning dynamics from noisy data is generally non-trivial for nonlinear systems---there is a chicken-and-egg problem in which accurate state estimation typically relies on the availability of correct models,
and correct models are most readily identified using accurate
state estimates.
Recent studies have addressed this challenge by attempting to jointly learn the noise and the dynamics.
\citet{gottwald_supervised_2021} approach
this problem from a data assimilation perspective,
and employ an Ensemble Kalman Filter (EnKF) to iteratively update the parameters for their dynamics model, then filter the current state using the updated dynamics. \agree{A recent follow-up to this work applies the DA-approach to partially-observed systems, and learns a model on a space of discrete-time delay-embeddings \citep{gottwaldCombiningMachineLearning2021}.}
Similar studies were performed by \citet{brajard_combining_2021}, and applied specifically in model error scenarios \citep{brajard_combining_2020,farchi_using_2021,wikner_combining_2020}.
\agree{\citet{ayedLearningDynamicalSystems2019a} focus on learning a continuous-time neural network representation of an ODE from partial observations, and learn a separate encoder neural network to map a historical warmup sequence to likely initial conditions in the un-observed space.}
 \citet{kaheman_learning_2019} approach this problem from a variational perspective,
performing a single optimization over all noise sequences and dynamical parameterizations.
\citet{nguyen_em-like_2019} use an Expectation-Maximization (EM) perspective to compare these variational and ensemble-based approaches, and further study is needed to understand the trade-offs between these styles of optimization.
\citet{chenAutodifferentiableEnsembleKalman2021} study an EnKF-based optimization scheme that performs joint, rather than EM-based learning, by
running gradient descent on an architecture that backpropagates through the data assimilator.

We note that data assimilators are themselves dynamical systems, which can be tuned (using optimization and machine learning) to provide more accurate state updates and more efficient state identification.
However, while learning improved DA schemes is sometimes viewed as a strategy for coping with model error \citep{zhu_adaptive_2000},
we see the optimization of DA and the correction of model errors as two separate problems that should be addressed individually.

When connecting models of dynamical systems to real-world data, it is also essential to recognize
that available observables may live in a very different space than the underlying dynamics.
Recent studies have shown ways to navigate this using autoencoders and dynamical systems models to jointly learn a latent embedding and dynamics in that latent space \citep{champion_data-driven_2019}.
Proof of concepts for similar approaches primarily focus on image-based inputs, but have potential for applications in medicine \citep{linial_generative_2021} and reduction of nonlinear PDEs \citep{maulik_reduced-order_2021}.

\subsection{Our Contributions}
\label{ssec:cont}

Despite this large and recent body
of work in data-driven learning methods
and hybrid modeling strategies, many challenges remain for understanding how to
best combine mechanistic and machine-learned models; indeed, the answer is
highly dependent on the application.  Here, we construct a mathematical
framework that unifies many of the common approaches for blending mechanistic
and machine learning models; having done so we provide strong evidence for
the value of hybrid approaches. Our contributions are listed as follows:

\begin{enumerate}

\item We provide an overarching framework for learning model error from (possibly noisy) data in dynamical systems settings,
studying both discrete- and continuous-time models,
together with both memoryless (Markovian) and memory-dependent
representations of the model error.
This formulation is agnostic to choice of mechanistic model and class of machine learning functions.

\item We study the Markovian learning problem in the context of ergodic
continuous-time dynamics, proving bounds on excess
risk and generalization error.

\item We present a simple approximation theoretic approach to
learning memory-dependent (non-Markovian)
model error in continuous-time, proving a form of
universal approximation for two families of memory-dependent
model error defined using recurrent neural networks.

\item We describe numerical experiments which: a) demonstrate the utility of
learning model error in comparison both with pure data-driven learning and
with pure (but slightly imperfect) mechanistic modeling;
b) compare the benefits of learning
discrete- versus continuous-time models;
c) demonstrate the utility of auto-differentiable data assimilation to learn dynamics from partially observed, noisy data;
d) explain issues arising
in memory-dependent model error learning in the (typical) situation
where the dimension of the memory variable is unknown.

\end{enumerate}

In \Cref{sec:setting}, we address contribution 1.
by defining the general settings
of interest for dynamical systems in both continuous- and discrete-time.
We then link these underlying systems to a machine learning framework
in Sections \ref{sec:ML} and \ref{sec:opt}; in the former we formulate
the problem in the setting of statistical learning, and in the latter we
define concrete optimization problems found from finite parameterizations
of the hypothesis class in which the model error is sought.
\Cref{sec:imp} is focused on specific choices of
architectures, and underpinning theory
for machine learning methods with these choices:
we analyze linear methods from the
perspective of learning theory in the context of ergodic
dynamical systems (contribution 2.); and we describe an
approximation theorem for continuous-time hybrid
recurrent neural networks (contribution 3.).
Finally, \Cref{sec:experiments} presents our detailed
numerical experiments; we
apply the methods in \Cref{sec:imp}
to exemplar dynamical systems of the forms outlined in \Cref{sec:setting},
and highlight our {findings (contribution 4.).

\section{Dynamical Systems Setting}
\label{sec:setting}

In the following, we use the phrase \emph{Markovian model error} to describe
model error expressible entirely in terms of the observed variable at the
current time, the memoryless situation; \emph{non-Markovian model error}
refers to the need to express the model error in terms of the past
history of the observed variable.

We present a general framework for modeling a dynamical system with
Markovian model error, first in continuous-time (\Cref{ssec:MarkovContinuous})
and then in discrete-time (\Cref{ssec:MarkovDiscrete}).
We then extend the framework to the setting of non-Markovian model error
(\Cref{ssec:MultiScale}), including a parameter $\scaleeps$ which enables us to
smoothly transition from scale-separated problems (where Markovian
closure is likely to be accurate) to problems where the unobserved
variables are not scale-separated from those observed (where
Markovian closure is likely to fail and memory needs to be accounted for).

It is important to note that the continuous-time formulation necessarily assumes an underlying data-generating process that is continuous in nature. The discrete-time formulation can be viewed as a discretization of an underlying continuous system, but can also represent systems that are truly discrete.

The settings that we present are all intended to represent and classify common situations that
arise in modeling and predicting dynamical systems.
In particular, we stress two key features.
First, we point out that mechanistic models (later referred to as a vector field $f_0$ or flow map $\Psi_0$) are often available and may provide predictions with reasonable fidelity. However, these models are often simplifications of the true system, and thus can be improved with data-driven approaches. Nevertheless, they provide a useful starting point that can reduce the complexity and data-hunger of the learning problems.
In this context, we study trade-offs between discrete- and continuous-time framings.
While we begin with fully-observed contexts in which the dynamics are Markovian with respect to the observed state $x$,
 we later note that we may only have access to partial observations $x$ of a larger system $(x,y)$.
By restricting our interest to prediction of these observables, we show how a latent dynamical process (e.g. a RNN) has the power to reconstruct the correct dynamics for our observables.

\subsection{Continuous-Time}
\label{ssec:MarkovContinuous}

Consider the following dynamical system

\begin{equation}
 \label{eq:fdag}
\begin{aligned}
&\dot{x} = f^\dag(x),\quad
x(0) = x_0,
\end{aligned}
 \end{equation}
and define $\sX_s := C([0,s];\R^{d_x})$.
If $f^\dag \in C^1(\R^{d_x}; \R^{d_x})$ then \eqref{eq:fdag}
has  solution $x(\cdot) \in \sX_T$
for any $T<\Tm=\Tm(x_0) \in \R^+$, the maximal interval of existence.

The primary model error scenario we envisage in this section is one in which
the vector field $f^\dag$ can only be partially known or accessed: we assume that
\begin{equation*}
\label{eq:ffz}
f^\dag=f_0 + m^\dag
\end{equation*}
where $f_0$ is known to us and $m^\dag$ is not known.
For any $f_0 \in C^1(\R^{d_x}; \R^{d_x})$ (regardless of its fidelity), there exists a function $m^\dag(x) \in C^1(\R^{d_x}; \R^{d_x})$ such that \eqref{eq:fdag} can be rewritten as
\begin{equation}
 \label{eq:fmdag}
 \dot{x} = f_0(x) + m^\dag(x).
\end{equation}
However, for this paper, it is useful to think of $m^\dag$
as being small relative to $f_0$;
the function $m^\dag$ accounts for \emph{model error.}
While the approach in \eqref{eq:fmdag} is targeted at learning residuals of $f_0$,
$f^\dag$ can alternatively be reconstructed from $f_0$ through a different function
$m^\dag(x) \in C^1(\R^{2d_x}; \R^{d_x})$ using the form
\begin{equation}
 \label{eq:fmaug}
 \dot{x} = m^\dag(x, f_0(x)).
\end{equation}

Both approaches are defined on spaces that allow perfect reconstruction of $f^\dag$.
However, the first formulation hypothesizes that the missing information is additive;
the second formulation provides no such indication.
Because the first approach ensures substantial usage of $f_0$, it has advantages in settings where $f_0$ is trusted by practitioners and model explainability is important.
The second approach will likely see advantages in settings where there is a simple non-additive form of model error, including coordinate transformations and other (possibly state-dependent) nonlinear warping functions of the nominal physics $f_0$.
Note that the use of $f_0$ in representing the model error in the augmented-input setting of \eqref{eq:fmaug}
includes the case of not leveraging $f_0$ at all.
It is, hence, potentially more useful than simply adopting an
$x-$dependent model error; but it requires
learning a more complex function.

The augmented-input method also has connections to model stacking \citep{wolpert_stacked_1992} or bagging \citep{breimanBaggingPredictors1996};
this perspective can be useful when there are $N$ model hypotheses:

\begin{equation*}
  \label{eq:fmstack}
  \dot{x} = m^\dag\big(x, f^{(1)}_0(x), \ \hdots \, f^{(N)}_0(x); \theta\big).
\end{equation*}
The residual-based design in \eqref{eq:fmdag} relates more to model boosting \citep{schapireStrengthWeakLearnability1990}.

Our goal is to use machine learning to approximate these corrector functions $m^\dag$ using our nominal knowledge $f_0$ and observations of a trajectory $\{x(t)\}_{t=0}^T \in \sX_T$, for some $T<\Tm(x_0)$, from the true system \eqref{eq:fdag}.
In this work, we consider only the case of learning $m^\dag(x)$
in equation \eqref{eq:fmdag}.
For now the reader may
consider $\{x(t)\}_{t=0}^T$ given without noise so that,
in principle, $\{\dot{x}(t)\}_{t=0}^T$ is known and may be leveraged.
In practice this will not be the case, for example if the data are
high-frequency but discrete in time; we address this issue in what
follows.

\subsection{Discrete-Time}
\label{ssec:MarkovDiscrete}
Consider the following dynamical system

\begin{equation}
 \label{eq:fdag_discrete}
 x_{k+1} = \Psi^\dag(x_k)
\end{equation}
and define $\sX_K :=\ell^\infty\bigl(\{0,\dots,K\};\R^{d_x}\bigl)$.
If $\Psi^\dag \in C(\R^{d_x}; \R^{d_x})$, the map yields solution
$\{x_{k}\}_{k \in \mathbb{Z}^+} \in \sX_\infty :=\ell^\infty\bigl(\mathbb{Z}^+;\R^{d_x}\bigl).$\footnote{Here we define $\mathbb{Z}^+=\{0,\dots,\}$, the non-negative
integers, including zero.}
As in the continuous-time setting, we assume we only have access to an approximate mechanistic model $\Psi_0 \in C(\R^{d_x}; \R^{d_x})$,
which can be corrected using an additive residual term $m^\dag \in C(\R^{d_x}; \R^{d_x})$:
\begin{equation}
 \label{eq:fm_discrete}
 x_{k+1} = \Psi_0(x_k) + m^\dag(x_k),
\end{equation}
or by feeding $\Psi_0$ as an input to a corrective warping function $m^\dag \in C(\R^{2d_x}; \R^{d_x})$
\begin{equation*}
 \label{eq:fmaug_discrete}
 x_{k+1} = m^\dag(x_k, \Psi_0(x_k));
\end{equation*}
we focus our experiments on the additive residual framing in \eqref{eq:fm_discrete}.

Note that the discrete-time formulation can be made compatible with continuous-time data
sampled uniformly at rate $\Delta t$ (i.e. $x(k \Delta t) = x_k$ for $k \in \N$).
To see this, let $\Phi^\dag(x_0,t) := x(t)$ be the solution operator for \eqref{eq:fdag}
(and $\Phi_0$ defined analogously for $f_0$).
We then have

\begin{subequations}
  \begin{align}
    \Psi^\dag(v) &:= \Phi^\dag(v, \Delta t) \label{eq:Psidag} \\
    \Psi_0(v) &:= \Phi_0(v, \Delta t), \label{eq:Psi0}
  \end{align}
\end{subequations}
which can be obtained via numerical integration of $f^\dag, f_0$, respectively.

\subsection{Partially Observed Systems (Continuous-Time)}
\label{ssec:MultiScale}
The framework in \Cref{ssec:MarkovContinuous,ssec:MarkovDiscrete}
assumes that the system dynamics are Markovian with respect to observable $x$.
Most of our experiments are performed in the fully-observed Markovian case.
However, this assumption rarely holds in real-world systems.
Consider a block-on-a-spring experiment conducted in an introductory physics laboratory.
In principle, the system is strictly governed by the position and momentum of the block (i.e. $f_0$), along with a few scalar parameters.
However (as most students' error analysis reports will note), the dynamics are also driven by a variety of external factors, like a wobbly table or a poorly greased track.
The magnitude, timescale, and structure of the influence of these different factors are rarely known;
and yet, they are somehow encoded in the discrepancy between the nominal equations of motion and the (noisy) observations of this multiscale system.

Thus we also consider the setting in which the dynamics of $x$ is not Markovian.
If we consider $x$ to be the observable states of a Markovian system in
dimension higher than $d_x$, then we can write the full system as

\begin{subequations}
 \label{eq:fgdag}
 \begin{align}
 \dot{x} &= f^\dag(x,y), \quad x(0)=x_0 \\
 \dot{y} &= \frac{1}{\scaleeps}g^\dag(x,y), \quad y(0)=y_0.
 \end{align}
 \end{subequations}
Here $f^\dag \in C^1(\R^{d_x}\times \R^{d_y}; \R^{d_x})$, $g^\dag \in C^1(\R^{d_x}\times \R^{d_y}; \R^{d_y})$, and $\scaleeps>0$ is a constant measuring
the degree of scale-separation (which is large when $\scaleeps$ is small).
The system yields solution
\footnote{With $\sY_T$ defined analogously to $\sX_T.$}
$x(\cdot) \in \sX_T, y(\cdot) \in \sY_T$
for any $T<\Tm(x(0),y(0)) \in \mathbb{R}^+,$ the maximal interval of existence.
We view $y$ as the complicated, unresolved, or unobserved aspects
of the true underlying system.

For any $f_0 \in C^1(\R^{d_x}; \R^{d_x})$ (regardless of its fidelity), there exists a function $m^\dag(x,y) \in C^1(\R^{d_x}\times \R^{d_y}; \R^{d_x})$ such that \eqref{eq:fgdag} can be rewritten as
\begin{subequations}
 \label{eq:fgmdag}
 \begin{align}
 \dot{x} &= f_0(x) + m^\dag(x,y) \\
 \dot{y} &= \frac{1}{\scaleeps}g^\dag(x,y).
 \end{align}
\end{subequations}
Now observe that, by considering the solution of
equation (\ref{eq:fgmdag}b) as a function of the
history of $x$, the influence of $y(\cdot) \in \sY_t$ on the solution $x(\cdot) \in \sX_t$ can be captured by a parameterized (w.r.t. $t$)
family of operators $m^\dag_t \colon \sX_t \times \R^{d_y} \times \R^+ \mapsto \R^{d_x}$ on the historical trajectory $\{x(s)\}_{s=0}^t$, unobserved initial condition $y(0)$, and scale-separation parameter $\scaleeps$ such that

\begin{equation}
 \label{eq:f0closurememory}
 \begin{aligned}
 \dot{x}(t) &= f_0 \bigl(x(t) \bigr) + m^\dag_t \bigl(\{x(s)\}_{s=0}^t ; \ y(0), \ \scaleeps \bigr).
 \end{aligned}
\end{equation}

Our goal is to use machine learning to find a Markovian model, in which
$x$ is part of the state variable, using our nominal knowledge $f_0$
and observations of a trajectory $\{x(t)\}_{t=0}^T \in \sX_T$, for
some $T<\Tm(x_0,y_0)$, from the true system \eqref{eq:fgdag}; note
that $y(\cdot)$ is not observed and nothing is assumed
known about the vector field $g^\dag$ or the parameter $\scaleeps.$

Note that equations \eqref{eq:fgdag}, \eqref{eq:fgmdag} and
\eqref{eq:f0closurememory} are all equivalent formulations of
the same problem and have identical solutions. The third formulation
points towards two intrinsic difficulties: the unknown \enquote{function}
to be learned is in fact defined by a family
of operators $m^\dag_t$ mapping the Banach space of
path history into $\R^{d_x}$; secondly the operator is parameterized
by $y(0)$ which is unobserved. We will address the first issue
by showing that the operators $m^\dag_t$ can be arbitrarily
well-approximated from within a family of differential equations
in dimension $\R^{2d_x+d_y}$; the second issue may be addressed
by techniques from data assimilation
\citep{asch_data_2016,law_data_2015,reich_probabilistic_2015}
once this approximating family is learned.
We emphasize, however, that we do not investigate the practicality
of this approach to learning non-Markovian systems and much remains
to be done in this area.

It is also important to note that these non-Markovian operators
$m^\dag_t$ can sometimes be adequately approximated by invoking
a Markovian model for $x$ and simply learning function $m^\dag(\cdot)$
as in \Cref{ssec:MarkovContinuous}.
For example, when $\scaleeps \to 0$ and the $y$ dynamics, with $x$ fixed, are sufficiently mixing, the averaging principle \citep{bensoussan_asymptotic_2011,vanden-eijnden_fast_2003,pavliotis_multiscale_2008}
may be invoked to deduce that

$$\lim_{\scaleeps \to 0} m^\dag_t \bigl(\{x(s)\}_{s=0}^t; \ y(0), \scaleeps \bigr) =
m^\dag(x(t))$$
for some $m^\dag$ as in \Cref{ssec:MarkovContinuous}. This fact is used
in section 3 of \citep{jiang_modeling_2020} to study the learning of
closure models for linear Gaussian stochastic differential
equations (SDEs).

It is highly advantageous to identify settings where Markovian modeling is sufficient, as it is a simpler learning problem.
We find that learning $m^\dag_t$ is necessary when there is
significant memory required to explain the dynamics of $x$;
learning $m^\dag$
is sufficient when memory effects are minimal.
In \Cref{sec:experiments}, we show that Markovian closures can perform well for certain tasks even when the scale-separation factor $\scaleeps$ is not small.
In \Cref{sec:ML} we demonstrate how the family of operators
$m^\dag_t$ may be represented through ODEs,
appealing to ideas which blend continuous-time RNNs with an assumed known
vector field $f_0.$

\subsection{Partially Observed Systems (Discrete-Time)}
\label{ssec:MultiScaleD}

The discrete-time analog of the previous setting considers a mapping
\begin{subequations}
 \label{eq:fgdag_discrete}
 \begin{align}
 x_{k+1} &= \Psi_1^\dag(x_k,y_k) \\
 y_{k+1} &= \Psi_2^\dag(x_k,y_k)
 \end{align}
 \end{subequations}
with $\Psi_1^\dag \in C(\R^{d_x}\times \R^{d_y}; \R^{d_x})$, $\Psi_2^\dag \in C(\R^{d_x}\times \R^{d_y}; \R^{d_y})$, yielding solutions $\{x_{k}\}_{k \in \mathbb{Z}^+} \in \sX_\infty$
and $\{y_{k}\}_{k \in \mathbb{Z}^+} \in \sY_\infty.$
We assume unknown $\Psi_1^\dag, \Psi_2^\dag$, but known approximate model $\Psi_0$
to rewrite \eqref{eq:fgdag_discrete} as

\begin{subequations}
 \label{eq:fgmdag_discrete}
 \begin{align}
 x_{k+1} &= \Psi_0(x_k) + m^\dag(x_k,y_k) \\
 y_{k+1} &= \Psi_2^\dag(x_k,y_k).
 \end{align}
\end{subequations}
We can, analogously to \eqref{eq:f0closurememory}, write a solution in the space of observables as
\begin{equation}
 \label{eq:Psi0closurememory}
 \begin{aligned}
   x_{k+1} &= \Psi_0 \bigl(x_k \bigr) + m^\dag_k \bigl(\{x_s\}_{s=0}^k , \ y_0\bigr)
 \end{aligned}
\end{equation}
with $m^\dag_k \colon \sX_k \times \R^{d_y} \to \R^{d_x}$, a function of
the historical trajectory $\{x_s\}_{s=0}^k$ and the unobserved
initial condition $y_0$.
If this discrete-time system is computed from the time $\Delta t$
map for \eqref{eq:fdag} then, for $\scaleeps\ll 1$ and when averaging
scenarios apply as discussed in \Cref{ssec:MultiScale},
the memoryless model in \eqref{eq:fm_discrete} may be used.

\section{Statistical Learning for Ergodic Dynamical Systems}
\label{sec:ML}
Here, we present a learning theory framework within which
to consider methods for discovering model error from data.
We outline the learning theory in a continuous-time Markovian setting
(using possibly discretely sampled data), and point to its
analogs in discrete-time and non-Markovian settings.

In the discrete-time settings, we assume access to discretely sampled training data  $\{x_k = x(k \Delta t)\}_{k=0}^K$, where $\Delta t$ is a uniform sampling rate and we assume that $K\Delta t=T.$
In the continuous-time settings, we assume access to continuous-time training data $\{\dot{x}(t), x(t)\}_{t=0}^T$;
\Cref{ssec:setup} discusses the
important practical question of estimating $\dot{x}(t), x(t)$ from discrete
(but high frequency) data.
In either case, consider the problem of identifying $m \in \mathcal{M}$
(where $\mathcal{M}$ represents the model, or hypothesis, class)
that minimizes a loss function quantifying closeness of $m$ to $m^\dag.$
In the Markovian setting we choose a measure $\mu$ on $\R^{d_x}$ and define the loss

$$\mathcal{L}_\mu(m,m^\dag) := \int_{\R^{d_x}} \|m(x) - m^\dag(x)\|^2_2 d\mu(x).$$

If we assume that, at the true $m^\dag$, $x(\cdot)$ is ergodic
with invariant density $\mu$,
then we can exchange time and space averages to see,
for infinitely long trajectory $\{x(t)\}_{t \ge 0}$,

\begin{align*}
  \mathcal{I}_\infty(m) &:= \lim_{T \to \infty} \frac{1}{T} \int_0^T \|m(x(t)) - m^\dag(x(t))\|^2_2 dt \\
  &= \int_{\R^{d_x}} \|m(x) - m^\dag(x)\|^2_2 d\mu(x) \\
  &= \mathcal{L}_\mu(m,m^\dag).
\end{align*}
Since we may only have access to a trajectory dataset of finite length $T$, it is natural to define
$$\mathcal{I}_T(m) :=  \frac{1}{T} \int_0^T \|m(x(t)) - m^\dag(x(t))\|^2_2 dt$$
and note that, by ergodicity,
$$\lim_{T \to \infty} \mathcal{I}_T(m) = \mathcal{L}_\mu(m,m^\dag).$$
Finally, we can use \eqref{eq:fmdag} to get
\begin{equation}
\label{eq:lossc}
\mathcal{I}_T(m) =  \frac{1}{T} \int_0^T \| \dot{x}(t) - f_0(x(t)) - m(x(t))  \|^2_2 dt.
\end{equation}
This, possibly regularized, is a natural loss function to employ when continuous-time data
is available, and should be viewed as approximating $\mathcal{L}_\mu(m,m^\dag).$
We can use these definitions to frame the problem of learning
model error in the language of statistical learning \citep{vapnik_nature_2013}.

If we let $\cM$ denote the hypothesis class over which we seek to
minimize $\mathcal{I}_T(m)$ then we may define
$$\ms_\infty=\argmin_{m \in \cM}\mathcal{L}_\mu(m,m^\dag)=\argmin_{m \in \cM}\mathcal{I}_\infty(m), \quad \ms_T=\argmin_{m \in \cM} \mathcal{I}_T(m).$$
The {\em risk} associated with seeking to approximate $m^\dag$ from the class $\cM$
is defined by $\mathcal{L}_\mu(\ms_\infty,m^\dag)$, noting that this is $0$
if $m^\dag \in \cM.$ The risk measures the intrinsic error incurred
by seeking to learn $m^\dagger$ from the restricted class $\cM$, which
typically does not include $m^\dagger;$ it is an approximation theoretic
concept which encodes the richness of the hypothesis class $\cM$. The
risk may be decreased by increasing the expressiveness of $\cM$. Thus
risk is independent of the data employed.
\emph{Empirical risk minimization} refers to minimizing
$\mathcal{I}_T$ (or a regularized version) rather than $\mathcal{I}_\infty,$
and this involves the specific instance of data that is available.
To quantify the effect of data volume on learning $m^\dag$ through
empirical risk minimization, it is helpful to introduce the following
two concepts.  The {\em excess risk} is defined by
\begin{equation}
  \label{eq:RT}
  R_T := \mathcal{I}_\infty(\ms_T)-\mathcal{I}_\infty(\ms_\infty)
\end{equation}
and represents the additional approximation error incurred by using
data defined over a finite time horizon $T$ in the estimate of
$m^\dag.$
The {\em generalization error} is
\begin{equation}
  \label{eq:GT}
  G_T := \mathcal{I}_T(\ms_T)-\mathcal{I}_\infty(\ms_T)
\end{equation}
and represents the discrepancy between training error, which is
defined using a finite trajectory, and idealized test error, which
is defined using an infinite length trajectory (or, equivalently,
the invariant measure $\mu$), when evaluated
at the estimate of the function $m^\dag$ obtained from finite data.
We return to study excess risk and generalization error
in the context of linear (in terms of parametric-dependence)
models for $m^{\dag}$, and under ergodicity assumptions on the data
generating process, in \Cref{ssec:learninglinear}.

We have introduced a machine learning framework
in the continuous-time Markovian setting,
but it may be adopted in discrete-time and in non-Markovian settings.
In \Cref{sec:opt}, we define appropriate objective functions for each of these cases.

\begin{remark} The developments we describe here for learning in
ODEs can be extended to the case of
learning SDEs; see
\citep{bento_information_2011,kutoyants_statistical_2004}.
In that setting, consistency in the large $T$ limit is well-understood.
It would be interesting to build on the learning theory perspective
described here to study statistical consistency for
ODEs; the approaches developed
in the work by \citet{mcgoff_consistency_2015,su_large_2021} are potentially
useful in this regard.
\erm
\end{remark}

\section{Parameterization of the Loss Function}
\label{sec:opt}
In this section, we define explicit optimizations for learning (approximate)
model error functions $m^\dag$ for the Markovian settings, and
model error operators $m^\dag_t$ for the non-Markovian settings;
both continuous- and discrete-time formulations are given.
We defer discussion of specific approximation architectures
to the next section. Here we make a notational transition from
optimization over (possibly non-parametric) functions
$m \in \cM$ to functions parameterized by $\theta$ that
characterize the class $\cM$.

In all the numerical experiments
in this paper, we study the use of continuous- and
discrete-time approaches to model data generated by a
continuous-time process. The set-up in this section reflects
this setting, in which two key parameters appear:
$T$, the continuous-time horizon for the data;
and $\Delta t$, the frequency of the data. The latter
parameter will always appear in the discrete-time models;
but it may also be implicit in continuous-time models
which need to infer continuous-time quantities from
discretely sampled data. We relate $T$ and $\Delta t$ by
$K\Delta t=T.$
We present the general forms of $\mathcal{J}_T(\theta)$ (with optional regularization terms $R(\theta)$). Optimization via derivative-based
methodology requires either analytic differentiation of the
dynamical system model with respect to parameters, or the use
of autodifferentiable ODE
solvers \citep{rubanova_latent_2019}.

\subsection{Continuous-Time Markovian Learning}
\label{ssec:ctml}
Here, we approximate the Markovian closure term in \eqref{eq:fmdag} with a parameterized function $m(x; \ \theta)$.
Assuming full knowledge of $\dot{x}(t), x(t)$, we learn the correction term
for the flow field by minimizing the following objective function of $\theta$:

\begin{align}
  \label{eq:Jcmark}
  \mathcal{J}_T(\theta) &=  \frac{1}{T}\int_0^T \big\| \dot{x}(t) - f_0(x(t)) - m(x(t);\ \theta) \big \|^2dt + R(\theta)
\end{align}
Note that $\mathcal{J}_T(\theta)=\mathcal{I}_T\bigl(m(\ \cdot \ ;\ \theta)\bigr)+R(\theta);$ thus the proposed methodology is a regularization of the
empirical risk minimization described in the preceding section.

Notable examples that leverage this framing include: the
paper \citep{kaheman_learning_2019}, where $\theta$ are coefficients
for a library of low-order polynomials and $R(\theta)$ is a
sparsity-promoting regularization defined by the SINDy framework;
\agree{the paper \citep{yinAugmentingPhysicalModels2021}, where $\theta$ are parameters of a deep neural network (DNN) and $L_2$ regularization is applied to the weights;}
the paper \citep{shi_neural_2018}, where $\theta$ are DNN parameters and $R(\theta)$ encodes
constraints on the Lipschitz constant for $m$ provided by spectral
normalization; and the paper
\citep{watson_applying_2019} which applies this approach to the Lorenz '96 Multiscale system using neural networks with an $L_2$ regularization on the weights.

\subsection{Discrete-Time Markovian Learning}
\label{ssec:dtml}

Here, we learn the Markovian correction term in \eqref{eq:fm_discrete} by minimizing:

\begin{align}
  \label{eq:Jdmark}
  \mathcal{J}_T(\theta) &= \frac{1}{K}\sum_{k=0}^{K-1} \big\| x_{k+1} - \Psi_0(x_k) - m(x_k;\ \theta) \big\|^2 + R(\theta)
\end{align}
This is the natural discrete-time analog of \eqref{eq:Jcmark} and
may be derived analogously, starting from a discrete analog of the
loss $\mathcal{L}_{\mu}(m,m^\dag)$ where now $\mu$ is assumed to
be an ergodic measure for \eqref{eq:fdag_discrete}. If a discrete
analog of \eqref{eq:lossc} is defined, then parameterization of $m$,
and regularization, leads to \eqref{eq:Jdmark}.
This is the underlying model assumption in the work by \citet{farchi_using_2021}.

\subsection{Continuous-Time Non-Markovian Learning}
\label{ssec:ctnml}
We can attempt to recreate the dynamics in $x$ for \eqref{eq:f0closurememory}
by modeling the non-Markovian residual term.
A common approach is to augment the dynamics space with a variable
$r \in \R^{d_r}$ leading to a model of the form
\begin{subequations}
  \label{eq:ml_cont_mem}
  \begin{align}
    \dot{x} &= f_0(x) + f_1(r, x;\ \theta) \label{eq:ml_cont_mem1} \\
    \dot{r} &= f_2(r, x;\ \theta) \label{eq:ml_cont_mem2}.
  \end{align}
\end{subequations}
We then seek a $d_r$ large enough, and then parametric models
$\{f_j(r,x; \cdot)\}_{j=1}^2$ expressive enough, to ensure that
the dynamics in $x$ are reproduced by \eqref{eq:ml_cont_mem}.
Note that, although the model error in $x$ is non-Markovian,
as it depends on the history of $x$, we are seeking to explain
observed $x$ data by an enlarged model, including hidden variables $r$,
in which the dynamics of $[x,r]$ is Markovian.

When learning hidden dynamics from partial observations,
we must jointly infer the missing states $r(t)$ and the, typically parameterized,
governing dynamics $f_1,f_2$. Furthermore, when the family of
parametric models is not closed with respect to translation of
$r$ it will also be desirable to learn $r_0$; when $x$ is observed noisily, it is similarly important to learn $x_0$.

To clarify discussions of \eqref{eq:ml_cont_mem} and its training from data,
let $u = [x,r]$ and $f$ be the concatenation of the vector fields given by $f_0,f_1,f_2$ such that
\begin{equation}
\label{eq:udot}
\dot{u} = f(u; \ \theta),
\end{equation}
with solution $u(t; \ v, \theta)$ solving \eqref{eq:udot} (and, equivalently, \eqref{eq:ml_cont_mem}) with initial condition $v$ (i.e. $u(0; \ v, \theta) = v$).
Consider observation operators $H_x, H_r$, such that $x = H_x u$, and $r = H_r u$, and further
define noisy observations of $x$ as
$$z(t) = x(t) + \eta(t),$$
where $\eta$ is i.i.d. observational noise.
We now outline three
optimization approaches to learning from noisily, partially observed data $z$.

\subsubsection{Optimization; Hard Constraint For Missing Dynamics} \label{sssec:H}
Since \eqref{eq:ml_cont_mem} is deterministic, it may suffice to jointly learn parameters and initial condition
$u(0)=u_0$ by minimizing \citep{rubanova_latent_2019}:
\begin{equation}
  \label{eq:Jcmem}
\begin{aligned}
  \mathcal{J}_T(\theta, u_0) &= \frac{1}{T}\int_0^T \big\| z(t) - H_x u(t; \ u_0, \theta) \big\|^2 dt + R(\theta) \\
\end{aligned}
\end{equation}
\agree{A similar approach was applied in \citep{ayedLearningDynamicalSystems2019a},
but where initial conditions were learnt as outputs of an additional DNN encoder network that maps observation sequences (of fixed length and temporal discretization) to initial conditions.}

\subsubsection{Optimization; Weak Constraint For Missing Dynamics} \label{sssec:W}
The hard constraint minimization is very sensitive
for large $T$ in settings where the dynamics is chaotic. This can
be ameliorated, to some extent, by considering the objective function
\begin{equation}
  \label{eq:JcmemWeakConstraint}
\begin{aligned}
  \mathcal{J}_T\big(\theta, u(t)\big) &= \frac{1}{T}\int_0^T \big\| z(t) - H_x u(t) \big\|^2 dt  \\
  & \quad  + \lambda \frac{1}{T}\int_0^T \big\| \dot{u}(t) - f(u(t);\ \theta) \big\|^2 dt.
\end{aligned}
\end{equation}
This objective function is employed in \citep{ouala_learning_2020}, where it is
motivated by the weak-constraint variational formulation (4DVAR)
arising in data assimilation \cite{law_data_2015}.

\subsubsection{Optimization; Data Assimilation For Missing Dynamics}
\label{ssec:mlda}
The weak constraint approach may still scale poorly with $T$ large,
and still relies on gradient-based optimization to infer hidden states.
To avoid these potential issues, we follow the recent work of \citep{chenAutodifferentiableEnsembleKalman2021},
using filtering-based methods to estimate the hidden state. This implicitly
learns initializations and it removes noise from data. It allows
computation of gradients of the resulting loss function
back through the filtering algorithm to learn model parameters.
We define a filtered state
$$\hat{u}_{t,\tau} := \hat{u}_t\Big(\tau; \ \hat{v}, \theta_\textrm{DYN}, \theta_\textrm{DA}, \big\{z(t+s)\big\}_{s=0}^\tau\Big)$$
as an estimate of $u(t+\tau) | \{z(t+s)\}_{s=0}^\tau$ when initialized at $\hat{u}_{t,0}=\hat{v}$.
\footnote{In practice we have found that setting $\hat{v} = 0$ works well.}
In this formulation, we distinguish $\theta_\textrm{DYN}$ as parameters for modeling dynamics via \eqref{eq:ml_cont_mem},
and $\theta_\textrm{DA}$ as hyper-parameters governing the specifics of a data assimilation scheme.
Examples of $\theta_\textrm{DA}$ are the constant gain matrix $K$ that must be chosen for 3DVAR, or parameters of the inflation and localization methods
deployed within Ensemble Kalman Filtering.
By parameterizing these choices as $\theta_\textrm{DA}$, we can optimize them jointly with model parameters $\theta_\textrm{DYN}$. To do this, let $\theta = [\theta_\textrm{DYN}, \theta_\textrm{DA}]$ and minimize
\begin{equation}
  \label{eq:JcmemMatt}
\begin{aligned}
  \mathcal{J}_T(\theta) &=  \frac{1}{(T-\tau_1-\tau_2) \tau_2} \int_{t=0}^{T-\tau_1-\tau_2} \int_{s=0}^{\tau_2}  \big\| z(t + \tau_1 + s) - H_x u(s;\ \hat{u}_{t,\tau_1}, \theta) \big\|^2 ds  \ dt.  \\
\end{aligned}
\end{equation}
Here, $\tau_1$ denotes the length of assimilation time used to estimate the state which initializes a parameter-fitting over window of duration $\tau_2;$ this
parameter-fitting leads to the inner-integration over $s$.
This entire picture is then translated through $t$ time units and the
objective function is found by integrating over $t$.
Optimizing \eqref{eq:JcmemMatt} can be understood as a minimization over short-term forecast errors generated from all assimilation windows. The inner integral takes a fixed start time $t$, applies data assimilation over a window $[t, t+\tau_1]$ to estimate an initial condition $\hat{u}_{t,\tau_1}$, then computes a short-term ($\tau_2$) prediction error resulting from this DA-based initialization.
The outer integral sums these errors over all available windows in long trajectory of data of length $T$.

In our work, we perform filtering using a simple 3DVAR method, whose constant
gain can either be chosen as constant, or can be learnt from data.
When constant, a natural choice is $K \propto H_x^T$, and this approach
has a direct equivalence to
standard warmup strategies employed in RNN and RC training \citep{vlachas_backpropagation_2020,pathak_hybrid_2018}.
The paper \citep{chenAutodifferentiableEnsembleKalman2021} suggests
minimization of a similar objective, but considers more general observation operators $h$, restricts the outer integral to non-overlapping windows, and solves the filtering problem with an EnKF with known state-covariance structure.

\begin{remark}
To motivate learning parameters of the data assimilation we make the following
observation: for problems in which the model is known
(i.e. $\theta_\textrm{DYN}$ is fixed) we observe
successes with the approach of identifying 3DVAR gains that
empirically outperform the theoretically derived
gains in \citep{law_analysis_2013}. Similar is to be expected for
parameters defining inflation and localization in the EnKF.
\erm
\end{remark}

\begin{remark}
  \label{rem:f1f2}
Specific functional forms of $f_1, f_2$ (and their corresponding parameter
inference strategies) reduce \eqref{eq:ml_cont_mem} to various approaches.
For the continuous-time RNN analysis that we discuss in Section \ref{sec:imp}
we will start by considering settings in which $f_1$ and $f_2$ are approximated
from expressive function classes, such as neural networks.
We will then specify to models in which $f_1$ is linear in $r$
and independent of $x$, whilst $f_2$ is a single layer neural network.
It is intuitive that the former may be more expressive and
allow a smaller $d_r$ than the latter; but the latter connects directly
to reservoir computing, a connection we make explicitly in what
follows. Our numerical experiments in Section \ref{sec:experiments}
will be performed in both settings: we will train models from the more
general setting; and by carefully designed experiments we will shed light
on issues arising from over-parameterization, in the sense of choosing
to learn a model in dimension higher than that of the true observed-hidden
model, working in the setting of linear coupling term $f_1,$ depending only on $r$.
\erm
\end{remark}
\begin{remark} The recent paper \citep{gupta_neural_2021} proposes
an interesting, and more computationally tractable, approach to
learning model error in the presence of memory.
They propose to learn a closure operator
$m_\tau( \cdot \ ; \ \theta)\colon \sX_\tau \to \R^{d_x}$ for a
 DDE with finite memory $\tau$:

\begin{equation}
  \label{eq:ml_cont_mem_delay}
    \dot{x}(t) = f_0\big(x(t)\big) + m_\tau \big(\{x(t - s)\}_{s=0}^\tau;\ \theta\big);
\end{equation}
neural networks are used to learn the operator $m_{\tau}.$
Alternatively, Gaussian processes are used to fit a specific
class of stochastic delay differential equation (SDDE)
\eqref{eq:ml_cont_mem_delay} in \citep{schneider2021learning}.
However, although delay-based approaches have seen some
practical success, in many applications they present issues for
domain interpretability and Markovian ODE or PDE closures are more desirable.
\erm
\end{remark}

\subsection{Discrete-Time Non-Markovian Learning}
\label{ssec:dtnml}
In similar spirit to \Cref{ssec:ctnml},
we can aim to recreate discrete-time dynamics in $x$ for \eqref{eq:Psi0closurememory} with model
\begin{subequations}
  \label{eq:ml_disc_mem}
  \begin{align}
    x_{k+1} &= \Psi_0(x_k) + \Psi_1(r_k, x_k;\ \theta) \label{eq:ml_disc_mem1} \\
    r_{k+1} &= \Psi_2(r_k, x_k;\ \theta) \label{eq:ml_disc_mem2}
  \end{align}
\end{subequations}
and objective function

\begin{equation}
  \label{eq:Jdmem}
\begin{aligned}
  \mathcal{J}_T(\theta, r_0) &= \frac{1}{K}\sum_{k=0}^{K-1} \big\| x_{k+1} - \Psi_0(x_k) - \Psi_1(r_k, x_k;\ \theta) \big\|^2 + R(\theta) \\
  & \textrm{s.t.} \quad  \{r_k\}_{k=1}^{K-1} \quad \textrm{solves \eqref{eq:ml_disc_mem2}.}
\end{aligned}
\end{equation}
Observe that estimation of initial condition $r_0$ is again crucial, and the data assimilation methods discussed in \Cref{ssec:ctnml}
can be adapted to this discrete-time setting.
The functional form of $\Psi_1, \Psi_2$ (and their corresponding parameter inference strategies) reduce \eqref{eq:ml_disc_mem} to various approaches, including recurrent neural networks, latent ODEs,
and delay-embedding maps (e.g. to get a delay embedding map, $\Psi_2$ is a shift operator).
\citet{pathak_hybrid_2018} use reservoir computing (a random features
analog to RNN, described in the next section) with $L_2$ regularization to study an approach similar to \eqref{eq:ml_disc_mem}, but included $\Psi_0(x_k)$ as a feature in $\Psi_1$ and $\Psi_2$ instead of using it as the central model upon which to learn residuals.
The data-driven super-parameterization approach in \citep{chattopadhyay_data-driven_2020-1} also appears to follow the underlying assumption of \eqref{eq:ml_disc_mem}.
\Citet{harlim_machine_2021} evaluate hybrid models of form
\eqref{eq:ml_disc_mem} both in settings where delay embedding closures are
employed and where RNN-based approximations via LSTMs are employed.

\section{Underpinning Theory}
\label{sec:imp}
In this section we
identify specific hypothesis classes $\mathcal{M}.$
We do this using random feature maps \citep{rahimi_random_2008}
in the Markovian settings (\Cref{ssec:mmrf}),
and using recurrent neural networks in the memory-dependent setting.
We then discuss these problems
from a theoretical standpoint. In \Cref{ssec:learninglinear}
we study excess risk and generalization error in the context of linear
models (a setting which includes the random features model as
a special case).
And we conclude by discussing the use of RNNs \citep{goodfellow_deep_2016}[Chapter 10]
for the non-Markovian settings (discrete- and continuous-time) in \Cref{ssec:nmrnn}; we
present an approximation theorem for continuous-time hybrid RNN models.
Throughout this section, the specific use of random
feature maps and of recurrent neural networks
is for illustration only;
other models could, of course, be used.

\subsection{Markovian Modeling with Random Feature Maps}
\label{ssec:mmrf}
In principle, any hypothesis class can be used to learn $m^\dag$.
However, we focus on models that are easily trained
on large-scale complex systems
and yet have proven approximation power
for functions between finite-dimensional Euclidean spaces.
For the Markovian modeling case, we use random feature maps;
like traditional neural networks, they possess arbitrary
approximation power \citep{rahimi_weighted_2008,rahimi_uniform_2008},
but further benefit from a quadratic minimization problem in the training phase, as do kernel or Gaussian process methods.
In our case studies, we found random feature models sufficiently expressive, we found optimization easily implementable, and we found the learned
models generalized well.
Moreover, their linearity with respect to unknown parameters enables a straightforward analysis of excess risk and generalization error in \Cref{ssec:learninglinear}.
Details on the derivation and specific design choices for our random feature modeling approach can be found in \Cref{sec:rfAppendix}, where we explain
how we sample $D$ random feature functions $\varphi: \R^{d_x} \to \R$
and stack them to form a vector-valued feature map
$\phi \colon \R^{d_x} \to \R^D$. Given this
random function $\phi$, we define the hypothesis class
\begin{equation}
\label{eq:hc}
\mathcal{M}=\{m \colon \R^{d_x} \to \R^{d_x} \ | \ \exists \ C \in \R^{d_x \times D}:
m(x)=C\phi(x)\}.
\end{equation}

\subsubsection{Continuous-Time}
\label{ssec:ImpMarkCont}
In the continuous-time framing, our Markovian closure model uses
hypothesis class \eqref{eq:hc} and thus takes the form
\begin{equation*}
\begin{aligned}
  \dot{x} &= f_0(x) + C\phi(x(t)).\\
\end{aligned}
\end{equation*}
We rewrite \eqref{eq:Jcmark} for this particular case with an $L_2$ regularization parameter $\lambda \in \R^+$:

\begin{equation}
  \label{eq:J_rfrc}
  \mathcal{J}_T(C) = \frac{1}{2T} \int_0^T \left\|\dot{x}(t) - f_0(x(t)) - C\phi \big(x(t)\big) \right\|^2dt + \frac{\lambda}{2} \|C\|^2.
\end{equation}

We employ the notation $A \otimes B := A B^T$ for the outer-product between
matrices $A \in \R^{m \times n},  B \in \R^{l \times n}$, and
the following notation for time-average
$$\overline{A}_T := \frac{1}{T} \int_0^T A(t) dt$$
of $A\in L^1([0,T],\R^{m \times n}).$
The objective function in \eqref{eq:J_rfrc} is quadratic and convex in $C$ and thus is globally minimized for the unique $C^\ast$ which makes the derivative of $\mathcal{J}_T$ zero. Consequently, the minimizer $C^\ast$ satisfies the following linear equation
(derived in \Cref{sec:ipderivation}):
\begin{equation}
  \label{eq:lip}
      (Z + \lambda I) (C^\ast)^T = Y.
\end{equation}
Here, $I \in \R^{D \times D}$ is the identity and
\begin{equation}
  \label{eq:ZY}
  \begin{aligned}
  Z &= \overline{[\phi \otimes \phi]}_T \in \R^{D \times D},\\
  Y &= \overline{[\phi \otimes m^\dag]}_T \in \R^{D \times d_{x}}.
  \end{aligned}
\end{equation}
Of course $m^\dag$ is not known, but $m^\dag(t)=\dot{x}(t)-f_0(x(t))$
can be computed from data.

To summarize, the algorithm proceeds as follows: 1) create a
realization of random feature vector $\phi$; 2) compute the
integrals in \eqref{eq:ZY} to obtain $Z, Y$; and 3) solve the
linear matrix equation \eqref{eq:lip} for $C^\ast$. Together
this leads to our approximation $m^\dag(x) \approx \ms_T(x; \ \theta) := C^\ast \phi(x)$.

\subsubsection{Discrete-Time}
In discrete-time, our Markovian closure model is
\begin{equation*}
\begin{aligned}
  x_{k+1} &= \Psi_0(x_k) + C\phi(x_k),\\
\end{aligned}
\end{equation*}
and is learnt by minimizing
\begin{equation}
  \label{eq:Jd_rfrc}
  \mathcal{J}_T(\theta) = \frac{1}{K}\sum_{k=0}^{K-1} \big\| x_{k+1} - \Psi_0(x_k) - C\phi \big(x(t)\big) \big\|^2 +  \frac{\lambda}{2} \|C\|^2.
\end{equation}
The objective function in \eqref{eq:Jd_rfrc} is quadratic in $C$ and thus globally minimized
at $C^\ast$.
As in \Cref{ssec:ImpMarkCont}, we can
compute $Z,Y$ and solve a linear system for $C^\ast$ to approximate $m^\dag(x) \approx \ms_T(x; \ \theta) := C^\ast \phi(x)$.
This formulation closely mirrors the fully data-driven linear regression approach in \citep{gottwald_supervised_2021}.


\subsection{Learning Theory for Markovian Models with Linear Hypothesis Class}
\label{ssec:learninglinear}

In this subsection we provide estimates of the
excess risk and generalization error in the context of
learning $m^\dag$ in \eqref{eq:fmdag} from a trajectory
over time horizon $T$. We study
ergodic continuous-time models
in the setting of \Cref{ssec:ctml}.
To this end we consider the
very general linear hypothesis class given by
\begin{equation}
\label{eq:hc2}
\mathcal{M}=\{m \colon \R^{d_x} \to \R^{d_x} \ | \ \exists \ \theta \in \R^{p}:
m(x)=\sum_{\ell=1}^p \theta_\ell f_{\ell}(x)\};
\end{equation}
we note that if the $\{f_\ell\}$ are i.i.d. draws of function $\phi$
in the case $D=d_x$ then this too reduces to a random features model, but
that our analysis in the context of statistical learning does not
rely on the random features structure.
In fact our analysis can be used to provide learning theory for other linear settings, where $\{f_\ell\}$ represents a dictionary of hypothesized features whose coefficients are to be learnt from data.
Nonetheless, universal approximation for random features \citep{rahimi_random_2008}
provides an important example of an approximation class for which the
loss function $\cI_{\infty}$ may be made arbitrarily small by choice of $p$ large enough
and appropriate choice of parameters, and the reader may find it useful to
focus on this case.
We also note that the theory we present in this subsection
is readily generalized to working with
hypothesis class \eqref{eq:hc}.

We make the following ergodicity assumption about the
data generation process:

\begin{assumption}
\label{asp:1}
Equation \eqref{eq:fmdag} possesses a compact attractor $\cA$
supporting invariant measure $\mu.$ Furthermore the dynamical system
on $\cA$ is ergodic with respect to $\mu$ and satisfies
a central limit theorem of the following form: for all H\"older continuous
$\varphi: \mathbb{R}^{d_x} \mapsto \mathbb{R}$, there is $\sigma^2=
\sigma^2(\varphi)$ such that
\begin{equation}
\label{eq:CLT}
\sqrt{T}\Biggl(\frac{1}{T}\int_0^T \varphi\bigl(x(t)\bigr)dt-\int_{\mathbb{R}^{d_x}}
\varphi\bigl(x\bigr)\mu(dx)\Biggr) \Rightarrow N(0,\sigma^2)
\end{equation}
where $\Rightarrow$ denotes convergence in distribution with respect to $x(0) \sim \mu$.
Furthermore a law of the iterated logarithm holds:
almost surely with respect to $x(0) \sim \mu$,
\begin{equation}
\label{eq:LIL}
{\rm limsup}_{T \to \infty}\Bigl(\frac{T}{\log\log T}\Bigr)^{\frac12}\Biggl(\frac{1}{T}\int_0^T \varphi\bigl(x(t)\bigr)dt-\int_{\mathbb{R}^{d_x}}
\varphi\bigl(x\bigr)\mu(dx)\Biggr) =\sigma.
\end{equation}
\end{assumption}

\begin{remark}
Note that in both \eqref{eq:CLT} and \eqref{eq:LIL} $\varphi(\cdot)$ is
only evaluated on (compact) $\cA$ obviating the need for any boundedness
assumptions on $\varphi(\cdot)$. In the work of Melbourne and co-workers,
Assumption \ref{asp:1} is proven to hold for a class of
differential equations, including the Lorenz '63 model at, and
in a neighbourhood of, the classical parameter values:
in \citep{holland_central_2007} the
central limit theorem is established; and in \citep{bahsoun_variance_2020}
the continuity of $\sigma$ in $\varphi$ is proven.
Whilst it is in general very difficult to prove such results
for any given chaotic dynamical system, there is strong
empirical evidence for such results in many chaotic dynamical systems
that arise in practice. This combination of theory and empirical
evidence justify studying the learning of model error under
Assumption \ref{asp:1}. \citet{tran_exact_2017} were the
first to make use of the theory of Melbourne and coworkers
to study learning of chaotic differential equations from
time-series.
\erm
\end{remark}

Given $m$ from hypothesis class $\mathcal{M}$ defined by \eqref{eq:hc2}
we define
\begin{equation}
\label{eq:ts}
\ts={\rm argmin}_{\theta \in \mathbb{R}^p} \mathcal{I}_\infty\bigl(m(\cdot\ ;\theta)\bigr) \ = {\rm argmin}_{\theta \in \mathbb{R}^p} \mathcal{L}_\mu\bigl(m(\cdot\ ;\theta)\bigr)
\end{equation}
and
\begin{equation}
\label{eq:tst}
\tst={\rm argmin}_{\theta \in \mathbb{R}^p} \mathcal{I}_T\bigl(m(\cdot\ ;\theta)\bigr).
\end{equation}
(Regularization is not needed in this setting because the data
is plentiful---a continuous-time trajectory---and the number of
parameters is finite.)
Then $\ts, \tst$ solve the linear systems
\begin{equation*}
\Ai \ts= \bi, \quad \AT \tst = \bT
\end{equation*}
where
\begin{align*}
(\Ai)_{ij}&=\int_{\R^{d_x}} \bigl\langle f_i(x), f_j(x) \bigr\rangle\, \mu(dx),
\quad
&(\bi)_{j}&=\int_{\R^{d_x}}  \bigl\langle m^\dag(x), f_j(x) \bigr\rangle\, \mu(dx),\\
(\AT)_{ij}&=\frac{1}{T}\int_0^T \bigl\langle f_i\bigl(x(t)\bigr), f_j\bigl(x(t)\bigr) \bigr\rangle\, dt,\quad
&(\bT)_{j}&=\frac{1}{T}\int_0^T \bigl\langle m^\dag\bigl(x(t)\bigr), f_j\bigl(x(t)\bigr) \bigr\rangle\, dt.
\end{align*}
These facts can be derived analogously to the derivation
in \Cref{sec:ipderivation}. Given $\ts$ and $\tst$ we also define

$$\ms_\infty=m(\cdot\ ;\ts),\,\, \ms_T=m(\cdot\ ;\tst).$$

Recall that it is assumed that $f^\dag, f_0$, and $m^\dag$ are $C^1.$
We make the following assumption regarding the vector fields defining
hypothesis class $\mathcal{M}.$

\begin{assumption}
\label{asp:2}
The functions $\{f_\ell\}_{\ell=0}^p$ appearing in definition
\eqref{eq:hc2} of the hypothesis class $\mathcal{M}$ are H\"older continuous
on $\R^{d_x}$. In addition, the matrix $\Ai$ is invertible.
\end{assumption}

\begin{theorem}
\label{t:erge}
Let Assumptions \ref{asp:1} and \ref{asp:2} hold. Then the scaled excess
risk $\sqrt{T}R_T$ in \eqref{eq:RT} (resp. scaled generalization error $\sqrt{T}|G_T|$ in \eqref{eq:GT})
is bounded above by
$\|\ER\|$ (resp. $\|\EG\|$), where random
variable $\ER \in \mathbb{R}^{p}$ (resp. $\EG \in \mathbb{R}^{p+1}$)
converges in distribution to $N(0,\Sigma_R)$ (resp. $N(0, \Sigma_G$))
w.r.t. $x(0) \sim \mu$ as $T \to \infty.$
Furthermore, there is constant
$C>0$ such that, almost surely w.r.t. $x(0) \sim \mu$,
$${\rm limsup}_{T \to \infty}\Bigl(\frac{T}{\log\log T}\Bigr)^{\frac12}
\bigl(R_T+|G_T|\bigr) \le C.$$
\end{theorem}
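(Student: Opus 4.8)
The plan is to reduce both the excess risk $R_T$ and the generalization error $G_T$ to algebraic functions of the empirical Birkhoff averages $\AT, \bT$ and their ergodic limits $\Ai, \bi$, and then to transfer the central limit theorem \eqref{eq:CLT} and law of the iterated logarithm \eqref{eq:LIL} from the scalar observables $\varphi$ to these matrix/vector-valued averages via linear-algebraic manipulations. The starting point is the explicit minimizer formulas: since $\ts = \Ai^{-1}\bi$ and $\tst = \AT^{-1}\bT$ (using Assumption \ref{asp:2} for invertibility of $\Ai$, and noting $\AT \to \Ai$ so $\AT$ is invertible for $T$ large, almost surely), both quantities of interest can be written in closed form. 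Because $\mathcal{I}_\infty\bigl(m(\cdot\,;\theta)\bigr)$ is a quadratic in $\theta$ minimized at $\ts$, its gradient vanishes there and a second-order Taylor expansion is exact:
\begin{equation*}
R_T = \mathcal{I}_\infty(\ms_T) - \mathcal{I}_\infty(\ms_\infty) = (\tst - \ts)^\top \Ai (\tst - \ts).
\end{equation*}
This already suggests that $R_T$ is quadratically small in the estimation error $\tst - \ts$, so the natural scaling that survives is $\sqrt{T}\,R_T$.

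First I would analyze $\tst - \ts$. Writing $\AT = \Ai + E_A$ and $\bT = \bi + e_b$, where the entries of $E_A$ and $e_b$ are exactly the centered Birkhoff averages appearing in \eqref{eq:CLT}–\eqref{eq:LIL} applied to the H\"older observables $\varphi_{ij} = \langle f_i, f_j\rangle$ and $\psi_j = \langle m^\dag, f_j\rangle$ (these are H\"older by Assumption \ref{asp:2} and $m^\dag \in C^1$), a first-order perturbation of the linear system gives
\begin{equation*}
\tst - \ts = \Ai^{-1}\bigl(e_b - E_A\,\ts\bigr) + O\bigl(\|E_A\|^2 + \|E_A\|\,\|e_b\|\bigr).
\end{equation*}
Substituting into the exact expression for $R_T$ yields, to leading order,
\begin{equation*}
\sqrt{T}\,R_T = \tfrac{1}{\sqrt T}\bigl\|\Ai^{-1/2}(e_b - E_A \ts)\,\sqrt T\bigr\|^2 + (\text{lower order}),
\end{equation*}
so one sets $\ER := \Ai^{-1/2}\sqrt{T}\,(e_b - E_A\ts)$, a linear combination of $\sqrt T$ times the centered averages. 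For the generalization error $G_T = \mathcal{I}_T(\ms_T) - \mathcal{I}_\infty(\ms_T)$, I would expand $\mathcal{I}_T(\theta) - \mathcal{I}_\infty(\theta)$ as a quadratic in $\theta$ whose coefficients are precisely $E_A$ and $e_b$, then evaluate at $\theta = \tst$ and use $\tst = \ts + O(T^{-1/2})$; the leading term is again a fixed linear functional of the centered averages, giving $\EG \in \R^{p+1}$ (the extra coordinate coming from the constant/$\|m^\dag\|^2$ term in the loss that does not cancel). The $\sqrt T$-scaled vectors $\sqrt T\,e_b$ and $\sqrt T\,\mathrm{vec}(E_A)$ converge jointly in distribution to a Gaussian by the multivariate CLT consequence of \eqref{eq:CLT} (via the Cram\'er–Wold device applied to arbitrary linear combinations, each of which is a scalar H\"older Birkhoff average), and continuous linear images of Gaussians are Gaussian; this produces the asserted limits $\ER \Rightarrow N(0,\Sigma_R)$ and $\EG \Rightarrow N(0,\Sigma_G)$. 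The almost-sure bound follows the same template with \eqref{eq:LIL} in place of \eqref{eq:CLT}: each centered average is $O\bigl((\log\log T/T)^{1/2}\bigr)$ almost surely, so $R_T + |G_T| = O(\log\log T/T)$ and multiplying by $(T/\log\log T)^{1/2}$ leaves a bounded $\mathrm{limsup}$, absorbing constants into $C$.

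The main obstacle I anticipate is controlling the passage from the \emph{scalar} CLT/LIL in Assumption \ref{asp:1} to the \emph{vector-valued} statements needed here, together with the handling of the nonlinear (matrix-inverse) dependence on $\AT$. For the distributional limit, the Cram\'er–Wold reduction is clean because every fixed linear functional of $(e_b, E_A)$ is itself a single scalar H\"older Birkhoff average to which \eqref{eq:CLT} applies directly; the subtlety is that joint convergence of the vector is not literally a hypothesis, so one must argue that convergence of every linear combination (with the Gaussian limit's variance depending continuously on the combining coefficients) suffices for joint weak convergence to a Gaussian, which it does. The more delicate point is justifying that the $O(\|E_A\|^2)$ remainder terms in the expansion of $\tst$ and in $R_T$ are genuinely negligible after multiplication by $\sqrt T$: since $\|E_A\| = O\bigl((\log\log T/T)^{1/2}\bigr)$ a.s. by \eqref{eq:LIL}, one has $\sqrt T\,\|E_A\|^2 = O(\log\log T/\sqrt T) \to 0$ a.s., which handles the a.s. claim and, via Slutsky's theorem, shows these remainders vanish in probability and hence do not affect the distributional limit. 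Invertibility of $\AT$ for all large $T$ (needed to define $\tst$) likewise follows almost surely from $\AT \to \Ai$ and the invertibility of $\Ai$; one should note that the CLT statement is about the distribution under $x(0)\sim\mu$, so the weak-convergence conclusions are stated in that sense, consistent with the theorem.
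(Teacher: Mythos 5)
Your proposal is correct, and its engine is the same as the paper's: both start from the normal equations, write $\AT\tst=\bT$, $\Ai\ts=\bi$, perturb to express $\tst-\ts$ in terms of the centered Birkhoff averages $\AT-\Ai$ and $\bT-\bi$ (whose entries are time averages of H\"older observables, so \eqref{eq:CLT} and \eqref{eq:LIL} apply entrywise), and then pass to vector-valued Gaussian limits via Cram\'er--Wold and Slutsky. Where you genuinely diverge is in how $R_T$ and $G_T$ are reduced to the parameter error. The paper (its Lemma \ref{lem:2}) writes $R_T=\int\langle \ms_T-\ms_\infty,\,\ms_T+\ms_\infty-2m^\dag\rangle\,d\mu$ and applies Cauchy--Schwarz, obtaining only a \emph{first-order} bound $R_T\le C\|\tst-\ts\|\bigl(\|\tst\|+\|\ts\|+1\bigr)$; the generalization error is then handled by the three-term telescoping $G_T=\bigl(\mathcal{I}_T(\ms_T)-\mathcal{I}_T(\ms_\infty)\bigr)+G_T^+-R_T$ with $G_T^+=\mathcal{I}_T(\ms_\infty)-\mathcal{I}_\infty(\ms_\infty)$ a single Birkhoff average of $\|m^\dag-\ms_\infty\|^2$. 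You instead exploit that $\mathcal{I}_\infty$ is an exact quadratic minimized at $\ts$, giving $R_T=(\tst-\ts)^\top\Ai(\tst-\ts)$, and expand $\mathcal{I}_T-\mathcal{I}_\infty$ as a quadratic in $\theta$ with coefficients $E_A,e_b$. This buys a sharper conclusion --- $R_T=O(1/T)$ up to $\log\log$ factors, so $\sqrt{T}R_T\to 0$, and an exact scalar CLT for $\sqrt{T}G_T$ (whose leading term is precisely the paper's $G_T^+$) --- at the cost of having to check that the second-order remainders are $o(T^{-1/2})$, which you do correctly via the LIL. Two small points of reconciliation with the stated theorem: your quadratic bound gives $\sqrt{T}R_T=T^{-1/2}\|\ER\|^2+o(1)$ rather than literally $\sqrt{T}R_T\le\|\ER\|$ for all $T$, so to match the statement verbatim you should either note that the inequality holds almost surely for all sufficiently large $T$ (since $\|\ER\|=O(\sqrt{\log\log T})$ a.s.), or fall back on the paper's linear bound; and your $(p+1)$-st coordinate of $\EG$ plays the same role as the paper's scalar $G_T^+$, just packaged differently.
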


The proof is provided in \Cref{sec:riskyproof}.

\begin{remark}
The convergence in distribution shows that, with high probability with respect to initial data, the excess risk and the generalization error are bounded above by terms of size $1/\sqrt{T}.$ This can be improved
to give an almost sure result, at the cost of the factor of $\sqrt{\log \log T}$.
The theorem shows that
(ignoring log factors and acknowledging the probabilistic
nature of any such statements)
trajectories of length ${\mathcal{O}(\epsilon^{-2})}$ are required
to produce  bounds on the excess risk and generalization error
of size ${\mathcal{O}(\epsilon)}$.

The bounds on excess risk and generalization error
also show that empirical risk minimization (of $\mathcal{I}_T$)
approaches the theoretically
analyzable concept of risk minimization (of $\mathcal{I}_\infty$)
over hypothesis class \eqref{eq:hc2}.
The sum of the excess risk $R_T$ and
the generalization error $G_T$ gives
\begin{equation*}
  E_T:=\mathcal{I}_T(\ms_T)-\mathcal{I}_\infty(\ms_\infty).
\end{equation*}
We note that $\mathcal{I}_T(\ms_T)$ is computable, once the
approximate solution $\ms_T$ has been identified; thus, when
combined with an estimate for $E_T$, this leads to an estimate for
the risk associated with the hypothesis class used.

If the approximating space $\mathcal{M}$ is rich enough, then
approximation theory may be combined with  \Cref{t:erge} to estimate the trajectory error
resulting from the learned dynamical system.
Such an approach is pursued in Proposition 3 of \citep{zhang_estimating_2020} for SDEs.
Furthermore, in that setting, knowledge of rate of mixing/decay of correlations
for SDEs may be used to quantify constants appearing in the error bounds.
It would be interesting to pursue such an analysis for chaotic ODEs with
known mixing rates/decay of correlations.
Such results on mixing are less well-developed, however, for chaotic
ODEs; see discussion of this point in \citep{holland_central_2007},
and the recent work \citep{bahsoun_variance_2020}.

Work by \citet{zhang_error_2021} demonstrates that error bounds on learned model error terms can be extended to bound
error on reproduction of invariant statistics for ergodic SDEs.
Moreover, \citet{e_priori_2019} provide a direction for proving similar bounds on model error learning using nonlinear function classes (e.g. two-layer neural networks).

Finally we remark on the dependence of the risk and generalization
error bounds on the size of the model error. It is intuitive that
the amount of data required to learn model error should decrease
as the size of the model error decreases. This is demonstrated numerically
in \Cref{ssec:l63} (c.f. \Cref{fig:compare_eps_pathak,fig:compare_eps_GP}).
Here we comment that Theorem \ref{t:erge} also exhibits this
feature: examination of the proof in Appendix \ref{sec:riskyproof} shows that
all upper bounds on terms appearing in the excess and generalization
error are proportional to $m^\dag$ itself or to $m_{\infty}^*$, its
approximation given an infinite amount of data; note that $m_{\infty}^*=m^\dag$
if the hypothesis class contains the truth.
\erm
\end{remark}

\subsection{Non-Markovian Modeling with Recurrent Neural Networks}
\label{ssec:nmrnn}

Recurrent Neural Networks (RNNs) are one of the \emph{de facto} tools for modeling systems with memory.
Here, we show straightforward
residual implementations of RNNs for continuous- and discrete-time, with the goal
of modeling non-Markovian model error.

\subsubsection{General Case}
Equation (\ref{eq:ml_cont_mem}b), and its coupling to (\ref{eq:ml_cont_mem}a),
constitute a very general way to account for memory-dependent model
error in the dynamics of $x$.
In fact, for $f_1, f_2$ sufficiently expressive (e.g. random feature functions, neural networks, polynomials), and $d_r \geq d_y$, solutions to \eqref{eq:ml_cont_mem} can approximate solutions to \eqref{eq:fgmdag} arbitrarily well.
We make this type of universal approximation theorem concrete
in Theorems \ref{t:rnnThmGeneral} and \ref{t:rnnThm}.
We start by proving Theorem \ref{t:rnnThmGeneral},
which rests on the following assumptions:

\begin{assumption}
\label{A1gen}
Functions $f^\dag, g^\dag, f_0, f_1, f_2$ are all globally Lipschitz.
\end{assumption}

Note that this implies that $m^\dag$ is also globally Lipschitz.

\begin{assumption}
\label{A3gen}
Fix $T>0$. There exist $\rho_0 \in \R, \rho_T \in \R$
such that, for equation \eqref{eq:fgmdag},
$\bigl(x(0), y(0)\bigr) \in B(0,\rho_0)$
implies that
 $\bigl(x(t), y(t)\bigr) \in B(0,\rho_T)$
$\forall \ t \in [0,T]$.
\end{assumption}

\begin{assumption}
\label{A4}
The hidden state in \eqref{eq:ml_cont_mem}, $r \in \R^{d_r}$, has
the same dimension as the true hidden state $y$ in \eqref{eq:fgmdag};
that is $d_r = d_y$.
\end{assumption}

\begin{assumption}
\label{Aapprox}
Let functions $f_1(\cdot \ ; \ \theta) \in C^1(\R^{d_x}\times\R^{d_y}; \ \R^{d_x})$
and $f_2(\cdot \ ; \ \theta) \in C^1(\R^{d_x}\times\R^{d_y}; \ \R^{d_y})$
be parameterized \footnote{Here we define $\mathbb{N}=\{1,2,\dots,\}$,
the strictly positive integers.}
by $n \in \mathbb{N}$
and $\theta \in \R^n$.
Then, for any $\delta > 0$, there exists $n>0$ and $\theta \in \R^n$ such that
$$\sup_{x,y \in B(0,\rho_T)}\|f^\dag(x,y) - f_1(x,y; \ \theta) \| \leq \delta$$
and
$$\sup_{x,y \in B(0,\rho_T)}\|g^\dag(x,y) - f_2(x,y; \ \theta) \| \leq \delta$$
\end{assumption}

Note that \Cref{Aapprox} can be satisfied by any parametric function class
possessing a universal approximation property for maps
between finite-dimensional Euclidean spaces, such as
neural networks, polynomials and random feature methods. The next theorem
transfers this universal approximation property for maps between
Euclidean spaces to a universal approximation property for representation
of model error with memory; this is a form of infinite dimensional
approximation since, via its own dynamics,
the memory variable $r$ maps the past history
of $x$ into the model error correction term in the dynamics for $x$.

\begin{theorem}
  \label{t:rnnThmGeneral}
Let Assumptions \ref{A1gen}-\ref{Aapprox} hold. Fix any $T>0$ and $\rho_0>0,$
let $x(\cdot),y(\cdot)$ denote the solution of \eqref{eq:fgmdag} with
$\scaleeps=1$ and let
$x_\delta(\cdot),r_\delta(\cdot)$ denote the solution of \eqref{eq:ml_cont_mem}
with parameters $\theta \in \bbR^n$.
Then, for any $\epsapprox>0$ and any $T>0$,
there is a parameter dimension $n=n_\delta \in \N$
and parameterization $\theta=\theta_\delta \in \R^{n_\delta}$
with the property that, for any initial condition
$\bigl(x(0), y(0)\bigr) \in B(0,\rho_0)$
for \eqref{eq:fgmdag},
there is initial condition
$(x_\delta(0),r_\delta(0)) \in \R^{d_x+d_y}$
for \eqref{eq:ml_cont_mem}, such that
$$\sup_{t \in [0,T]} \|x-x_\delta\| \le \epsapprox.$$
\end{theorem}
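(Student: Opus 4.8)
The plan is to convert the finite-dimensional universal approximation supplied by Assumption \ref{Aapprox} into an approximation of trajectories through a Gr\"onwall (continuous-dependence-on-vector-field) estimate on the joint state. Write the true state as $z(t) = (x(t), y(t))$ solving \eqref{eq:fgmdag} with $\scaleeps = 1$, and the model state as $w(t) = (x_\delta(t), r_\delta(t))$ solving \eqref{eq:ml_cont_mem}; since $d_r = d_y$ by Assumption \ref{A4}, both evolve in $\R^{d_x + d_y}$. The natural choice of model initial condition is $(x_\delta(0), r_\delta(0)) = (x(0), y(0))$, so that $z(0) = w(0)$ and the initial error vanishes. I would fix the parameter dimension $n$ and parameter $\theta$ once, via Assumption \ref{Aapprox} with a tolerance $\delta_0$ to be chosen, and verify at the end that every estimate is uniform over initial conditions in $B(0,\rho_0)$ --- this is possible because the radius $\rho_T$ furnished by Assumption \ref{A3gen} is itself uniform over $B(0,\rho_0)$, so a single $\theta$ serves every admissible initial condition.

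For the core estimate I would introduce the true joint vector field $F^\dag(x,y) = \bigl(f_0(x) + m^\dag(x,y),\, g^\dag(x,y)\bigr)$ and the model field $F^\theta(x,r) = \bigl(f_0(x) + f_1(r,x;\theta),\, f_2(r,x;\theta)\bigr)$. Differentiating the error and inserting $\pm F^\dag(w)$ gives
\begin{equation*}
\frac{d}{dt}\bigl(z - w\bigr) = \bigl[F^\dag(z) - F^\dag(w)\bigr] + \bigl[F^\dag(w) - F^\theta(w)\bigr].
\end{equation*}
The first bracket is controlled by the global Lipschitz constant $L$ of $F^\dag$ (Assumption \ref{A1gen}, which also makes $m^\dag$ Lipschitz), giving $\|F^\dag(z) - F^\dag(w)\| \le L\|z - w\|$; the second bracket is precisely the vector-field approximation error, and Assumption \ref{Aapprox} bounds it by a fixed multiple of $\delta_0$ --- so that $f_0 + f_1$ approximates $f^\dag = f_0 + m^\dag$ and $f_2$ approximates $g^\dag$ --- provided $w(t)$ lies in the compact ball on which that assumption is posed. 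Writing $e(t) = \|z(t) - w(t)\|$ with $e(0) = 0$, this yields $\bigl\|\tfrac{d}{dt}(z - w)\bigr\| \le L\,e(t) + C\delta_0$ and hence, by Gr\"onwall, $\sup_{t \in [0,T]} e(t) \le C\delta_0\,(e^{LT} - 1)/L =: C' \delta_0$, with $C'$ depending only on $L$ and $T$. Choosing $\delta_0$ small enough that $C'\delta_0 \le \epsapprox$ --- which fixes $n = n_\delta$ and $\theta = \theta_\delta$ through Assumption \ref{Aapprox} --- then gives $\sup_{t\in[0,T]}\|x - x_\delta\| \le \sup_t e(t) \le \epsapprox$.

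The step requiring the most care --- and the main obstacle --- is justifying that the model trajectory $w(t)$ remains inside the compact ball on which the approximation bound is valid; without this the bound on the second bracket cannot be invoked. Assumption \ref{A3gen} confines the \emph{true} trajectory $z(t)$ to $B(0,\rho_T)$, but the model trajectory has no such a priori guarantee. I would close this with a continuation (bootstrapping) argument: pose Assumption \ref{Aapprox} on a slightly enlarged ball $B(0,\rho_T + 1)$ (legitimate because the approximation property underlying it holds on any compact set), and set $t^\ast = \sup\{t \le T : w(s) \in B(0,\rho_T+1)\ \forall s \le t\}$. On $[0, t^\ast]$ the Gr\"onwall estimate applies and gives $e(t) \le C'\delta_0$; since $z(t) \in B(0,\rho_T)$ it follows that $w(t) \in B(0, \rho_T + C'\delta_0)$, and taking $\delta_0$ small enough that $C'\delta_0 < 1$ keeps $w(t)$ strictly inside $B(0,\rho_T+1)$, so by continuity $t^\ast = T$ and the estimate holds on all of $[0,T]$. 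Once containment is secured the remainder is routine: since $z$ and $w$ are $C^1$ in $t$ (both vector fields being Lipschitz, indeed $C^1$), the Gr\"onwall argument is cleanest run on the integral form $e(t) \le \int_0^t \bigl(L\,e(s) + C\delta_0\bigr)\,ds$, which sidesteps any non-smoothness of the norm at the origin.
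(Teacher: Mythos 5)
Your proposal is correct, and it reaches the same Gr\"onwall-type bound $\sup_{t\in[0,T]}\|z-w\|\le C\delta_0 T e^{LT}$ with the same choice of initial condition $(x_\delta(0),r_\delta(0))=(x(0),y(0))$, but it routes the error through a different decomposition than the paper, and this difference is exactly what forces your extra continuation argument. You insert $\pm F^\dag(w)$, so the vector-field discrepancy $F^\dag(w)-F^\theta(w)$ is evaluated along the \emph{model} trajectory, which has no a priori confinement; hence you must bootstrap to keep $w(t)$ inside the region where Assumption \ref{Aapprox} applies, and your argument for doing so (enlarge the ball, take $C'\delta_0<1$, conclude $t^\ast=T$ by continuity) is sound. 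The paper instead writes the \emph{true} system as the learned field plus a residual, i.e.\ effectively inserts $\pm F^\theta(z)$: the discrepancy term $e_w(t)=F^\dag(z(t))-F^\theta(z(t))$ is then evaluated along the true trajectory, which Assumption \ref{A3gen} confines to $B(0,\rho_T)$, while the remaining bracket $F^\theta(z)-F^\theta(w)$ is controlled by the global Lipschitzness of $f_0,f_1,f_2$ guaranteed by Assumption \ref{A1gen} --- so no containment argument is needed at all. The trade-off: the paper's route is shorter because it leans on global Lipschitzness of the learned fields; yours is slightly longer but only ever uses the approximation property and Lipschitz bounds on a fixed compact set, so it would survive weakening Assumption \ref{A1gen} to local Lipschitzness of $f_1,f_2$. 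One cosmetic point: you invoke Assumption \ref{Aapprox} on $B(0,\rho_T+1)$ while it is stated on $B(0,\rho_T)$; this is the same liberty the paper takes (its proof uses $B(0,2\rho_T)$) and is harmless since the assumption encodes universal approximation on arbitrary compact sets, but it is worth flagging explicitly as you did.
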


The proof is provided in \Cref{sec:rnnproofGen};
it is a direct consequence of the approximation power of $f_1, f_2$ and the Gronwall Lemma.

\begin{remark}
  \label{rem:drdy}
Note that this existence theorem also holds for $d_r > d_y$ by freezing
the dynamics in the excess dimensions and initializing it at, for
example, $0$.  However it is possible for augmentations with $d_r > d_y$
to introduce numerical instability when imperfectly initialized
in the excess dimensions, despite their provable correctness
when perfectly initialized  (see \Cref{ssec:irnn}).
Nevertheless, we did not encounter such issues
when training the general model class on the
examples considered in this paper
-- see \Cref{ssec:partialNoisyObs}).
\erm
\end{remark}

\subsubsection{Linear Coupling}
We now study a particular form RNN in which the coupling term
$f_1$ appearing in \eqref{eq:ml_cont_mem} is linear and depends
only on the hidden variable:
 \begin{subequations}
   \label{eq:hybridRNN_cont}
   \begin{align}
     \dot{x} &= f_0(x) + Cr \\
     \dot{r} &= \sigma(Ar + Bx + c).
   \end{align}
 \end{subequations}
Here $\sigma$ is an activation function.
The specific linear coupling form is of particular interest because of the
connection we make (see Remark \ref{rem:rc} below) to reservoir computing.
The goal is to choose $A,B,C,c$ so that output $\{x(t)\}_{t \geq 0}$
matches output of \eqref{eq:fgmdag}, without observation
of $\{y(t)\}_{t \geq 0}$ or knowledge of $m^\dag$ and $g^\dag.$
As in the general case from the preceding subsection,
inherent in choosing these matrices $A,B,C$ and vector $c$
is a choice of embedding dimension for variable $r$ which will
typically be larger than dimension of $y$ itself.  The idea is
to create a recurrent state $r$ of sufficiently
large dimension $d_r$ whose evolution equation takes $x$ as input and,
after a final linear
transformation, approximates the missing dynamics $m^\dag(x,y).$

There is existing approximation theory for discrete-time RNNs
\citep{schafer_recurrent_2007} showing that a discrete-time analog
of our linear coupling set-up can be used to approximate discrete-time
systems arbitrarily well; see also Theorem 3 of \citep{harlim_machine_2021}.
There is also a general approximation theorem using continuous-time RNNs
proved in \citep{funahashi_approximation_1993}, but it does not
apply to the linear-coupling setting. We thus extend the work in these
three papers to the context of residual-based
learning as in \eqref{eq:hybridRNN_cont}.
We state the theorem after making three assumptions upon which it rests:

\begin{assumption}
\label{A1}
Functions $f^\dag, g^\dag, f_0$ are all globally Lipschitz.
\end{assumption}

Note that this implies that $m^\dag$ is also globally Lipschitz.

\begin{assumption}
\label{Asigma}
Let $\sigma_0 \in C^1(\R; \R)$ be bounded and monotonic, with bounded
first derivative. Then $\sigma(u)$
defined by $\sigma(u)_i=\sigma_0(u_i)$
satisfies $\sigma \in C^1(\R^p; \R^p).$
\end{assumption}

\begin{assumption}
\label{A3}
Fix $T>0$. There exist $\rho_0 \in \R, \rho_T \in \R$
such that, for equation \eqref{eq:fgmdag},
$\bigl(x(0), y(0)\bigr) \in B(0,\rho_0)$
implies that
 $\bigl(x(t), y(t)\bigr) \in B(0,\rho_T)$
$\forall \ t \in [0,T]$.
\end{assumption}

\begin{theorem}
  \label{t:rnnThm}
Let Assumptions \ref{A1}-\ref{A3} hold. Fix any $T>0$ and $\rho_0>0,$
let $x(\cdot),y(\cdot)$ denote the solution of \eqref{eq:fgmdag} with
$\scaleeps=1$ and let
$x_\delta(\cdot),r_\delta(\cdot)$ denote the solution
of \eqref{eq:hybridRNN_cont} with parameters $\theta \in \bbR^n$.
Then, for any $\epsapprox>0$ and any $T>0$,
there is embedding dimension $d_r \in \N$, parameter dimension
$n=n_{\delta} \in \N$
and parameterization $\theta=\theta_\delta
= \{A_\delta,\ B_\delta,\ C_\delta, c_\delta\}$
with the property that, for any initial condition
$\bigl(x(0), y(0)\bigr) \in B(0,\rho_0)$
for \eqref{eq:fgmdag},
there is initial condition
$(x_\delta(0),r_\delta(0)) \in \R^{d_x+d_r}$
for \eqref{eq:hybridRNN_cont}, such that
$$\sup_{t \in [0,T]} \|x-x_\delta\| \le \epsapprox.$$
\end{theorem}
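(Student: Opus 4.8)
The plan is to prove Theorem~\ref{t:rnnThm} by a three-stage comparison argument, reducing everything to Gronwall estimates once the right intermediate (``target'') dynamical system has been identified. Unlike the general case of Theorem~\ref{t:rnnThmGeneral}, where $f_1,f_2$ are themselves universal approximators and one simply fits $m^\dag$ and $g^\dag$ directly, here the coupling $Cr$ is \emph{linear} and the hidden dynamics is a \emph{single} activation layer $\sigma(Ar+Bx+c)$; neither object can, on its own, represent a general nonlinear vector field. The whole difficulty is therefore to relocate the ``outer linear combination of activations'' that a one-hidden-layer network needs (cf.\ Funahashi) out of the state dynamics, where the architecture forbids it, and into the linear readout, where it is allowed.

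The mechanism I would use is that of \emph{integrator neurons}. If a block $q$ of $r$ has dynamics $\dot q=\sigma(\text{affine}(q,x))$ and we form a linear readout $\hat y=L_q q$, then $\dot{\hat y}=L_q\sigma(\text{affine}(q,x))$ is exactly a linear combination of activations evaluated at an affine function of $(q,x)$. Choosing $L_q$ and the affine map to be the output/input weights of a one-hidden-layer network approximating $g^\dag$ (legitimate on the compact set $B(0,\rho_T)$ of Assumption~\ref{A3}, since Assumption~\ref{Asigma} makes $\sigma_0$ a sigmoidal activation to which classical universal approximation applies), one gets $\dot{\hat y}\approx g^\dag(x,\hat y)$, so the readout $\hat y$ tracks the unobserved $y$. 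A second block $s$, read out as $\hat\mu=L_s s$, is constructed the same way so that $\dot{\hat\mu}$ approximates the time-derivative of the model error along trajectories; the $x$-equation $\dot x=f_0(x)+Cr$ then only needs $C$ to select $\hat\mu$, giving $Cr\approx m^\dag(x,y)$. This is what fixes the embedding dimension $d_r\gg d_y$ and the parameters $A,B,C,c$.

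To make the second block well defined I would introduce the intermediate target ODE on $(x,\hat y,\mu)$ given by $\dot x=f_0(x)+\mu$, $\dot{\hat y}=g^\dag(x,\hat y)$, and $\dot\mu=\nabla_x\tilde m\,(f_0(x)+\mu)+\nabla_y\tilde m\,g^\dag(x,\hat y)$, where $\tilde m$ is a $C^1$ mollification of $m^\dag$ with $\sup_{B(0,\rho_T)}\|\tilde m-m^\dag\|$ small. One checks that $\mu(t)=\tilde m\bigl(x(t),\hat y(t)\bigr)$ is invariant under this flow (it holds at $t=0$ and the two sides have the same derivative), so the target system collapses to the smoothed true dynamics $\dot x=f_0(x)+\tilde m(x,\hat y)$, $\dot{\hat y}=g^\dag(x,\hat y)$, whose $x$-component is $O(\delta)$-close to the true $x$ of \eqref{eq:fgmdag} by Gronwall. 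Crucially, the right-hand side $\nabla\tilde m(\cdots)$ is now \emph{continuous}, so the universal-approximation step for the $\hat\mu$ block is legitimate. I would then set $x_\delta(0)=x(0)$ and choose $r_\delta(0)$ so that $\hat y(0)=y(0)$ and $\hat\mu(0)=\tilde m(x(0),y(0))$ --- permissible because the theorem lets the RNN initial condition depend on $(x(0),y(0))$.

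The final stage is the error propagation. Global Lipschitz continuity (Assumption~\ref{A1}) and the bounded derivative of $\sigma_0$ (Assumption~\ref{Asigma}) make every vector field in play Lipschitz, so a single application of Gronwall's inequality converts the three $O(\delta)$ discrepancies --- the mollification error, and the two one-hidden-layer approximation errors for $g^\dag$ and for the $\mu$-dynamics --- into a bound $\sup_{t\in[0,T]}\|x-x_\delta\|\le C(T)\,\delta$, which is made at most $\epsapprox$ by taking $\delta$ small. I expect two places to require care. The main obstacle is the integrator-neuron construction itself: verifying that the affine arguments of $\sigma$ and the linear readouts can be wired consistently (the argument of the $\mu$-block's activation must be affine in $(q,x)$, hence depend on $\hat y$ only through $L_q q$), and that the smoothing of the merely-Lipschitz $m^\dag$ to a $C^1$ function is genuinely needed to render the $\mu$-dynamics continuous before invoking universal approximation. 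The secondary, more routine obstacle is the standard bootstrap showing the RNN trajectory $(x_\delta,r_\delta)$ remains in a slightly enlarged ball on all of $[0,T]$, so that the approximations --- valid only on $B(0,\rho_T)$ --- continue to apply; this follows by a continuation argument from the same Gronwall estimate.
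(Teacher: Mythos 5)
Your proposal is correct and follows essentially the same route as the paper: your ``integrator neuron'' blocks with linear readouts $\hat y = L_q q$ and $\hat\mu = L_s s$ are precisely the paper's change of variables $y_\delta = C_g z_\delta$, $m_\delta = C_h n_\delta$ (with $C_g, C_h$ taken full rank so the initializations can be matched), and your intermediate target ODE on $(x,\hat y,\mu)$ is the paper's augmented system \eqref{eq:fgmdag2} with its invariant manifold $m = m^\dag(x,y)$, followed by the same Gronwall propagation. The only divergence is your mollification of $m^\dag$, which is harmless but unnecessary here, since the standing assumptions of \Cref{ssec:MultiScale} already take $m^\dag \in C^1$, so the vector field $h^\dag$ in \eqref{eq:hdag} is continuous and universal approximation applies to it directly.
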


The complete proof is provided in \Cref{sec:rnnproof};
here we describe its basic structure.
Define $m(t):=m^\dag\bigl(x(t),y(t)\bigr)$
and, with the aim of finding a differential equation for $m(t)$,
recall \eqref{eq:fgmdag} with $\scaleeps=1$ and define the vector
field
\begin{equation}
\label{eq:hdag}
h^\dag(x,y) := \nabla_x m^\dag(x,y) [f_0(x) + m^\dag(x,y)] + \nabla_y m^\dag(x,y) g^\dag(x,y).
\end{equation}
Since $\dot{m}(t)$ is the time derivative of $m^\dag\big(x(t), y(t)\big)$,
when $(x,y)$ solve \eqref{eq:fgmdag} we have
$$\dot{m} = h^\dag(x,y).$$

Motivated by these observations, we now introduce a new system of
autonomous ODEs for the variables $(x,y,m)
\in \R^{d_x} \times \R^{d_y} \times \R^{d_x}$:
\begin{subequations}
 \label{eq:fgmdag2}
 \begin{align}
 \dot{x} &= f_0(x) + m \\
 \dot{y} &= g^\dag(x,y) \\
 \dot{m} & = h^\dag(x,y).
 \end{align}
\end{subequations}
To avoid a proliferation of symbols we use the same letters
for $(x,y)$ solving equation \eqref{eq:fgmdag2} as for
$(x,y)$ solving equation \eqref{eq:fgmdag}.
We now show $m=m^\dag(x,y)$ is an invariant manifold
for \eqref{eq:fgmdag2}; clearly, on this manifold, the dynamics of
$(x,y)$ governed by \eqref{eq:fgmdag2}
reduces to the dynamics of $(x,y)$ governed by \eqref{eq:fgmdag}.
Thus $m(t)$ must be initialized at
$m^\dag\bigl(x(0),y(0)\bigr)$ to ensure
equivalence between the solution of \eqref{eq:fgmdag2} and
\eqref{eq:fgmdag}.

The desired invariance of manifold $m=m^\dag(x,y)$
under the dynamics \eqref{eq:fgmdag2} follows from the identity
\begin{equation}
\label{eq:identity}
\frac{d}{dt}\Bigl(m-m^\dag(x,y)\Bigr)=-\nabla_x m^\dag(x,y)\bigl(
m-m^\dag(x,y)\bigr).
\end{equation}
The identity is derived by noting that, recalling \eqref{eq:hdag}
for the definition of $h^\dag$, and
then using \eqref{eq:fgmdag2},
\begin{align*}
\frac{d}{dt}m&=h^\dag(x,y)\\
&=\nabla_x m^\dag(x,y) [f_0(x) + m^\dag(x,y)] + \nabla_y m^\dag(x,y) g^\dag(x,y)\\
&=\nabla_x m^\dag(x,y) [f_0(x) + m)] + \nabla_y m^\dag(x,y) g^\dag(x,y)\\
&\quad\quad\quad -\nabla_x m^\dag(x,y)\bigl(
m-m^\dag(x,y)\bigr)\\
&=\frac{d}{dt}m^\dag(x,y)-\nabla_x m^\dag(x,y)\bigl(
m-m^\dag(x,y)\bigr).
\end{align*}
We emphasize this calculation is performed under the dynamics
defined by \eqref{eq:fgmdag2}.

The proof of the RNN approximation
property proceeds by approximating vector fields
$g^\dag(x,y), h^\dag(x,y)$ by neural networks and
introducing linear transformations of $y$ and $m$
to rewrite the approximate version of system \eqref{eq:fgmdag2}
in the form \eqref{eq:hybridRNN_cont}.
The effect of the approximation of the vector fields on
the true solution is then propagated through the system
and its effect controlled via a straightforward Gronwall argument.

\begin{remark}
\label{rem:harlim}
The details of training continuous-time RNNs to ensure accuracy
and long-time stability are a subject of current research
\citep{chang_antisymmetricrnn_2019,erichson_lipschitz_2020,ouala_learning_2020,chenAutodifferentiableEnsembleKalman2021}
and in this paper we confine the training of RNNs to an example
in the general setting, and not the case of linear coupling.
Discrete-time RNN training, on the other hand, is much more mature, and has produced satisfactory accuracy and stability for settings with uniform sample rates that are consistent across train and testing scenarios \citep{harlim_machine_2021}.
The form with linear coupling is widely studied in
discrete time models.  Furthermore, sophisticated variants on RNNs,
such as Long-Short Term Memory (LSTM)
RNNs \citep{hochreiter_long_1997} and Gated Recurrent Units (GRU) \citep{choPropertiesNeuralMachine2014}, are often more effective, although similar
in nature RNNs. However, the potential formulation, implementation and
advantages of these variants in the continuous-time setting
\citep{niu_recurrent_2019} is not yet understood.
We refer readers to \citep{goodfellow_deep_2016} for background on discrete
RNN implementations and
backpropagation through time (BPTT).
For implementations of continuous-time RNNs, it is common to leverage the success of the automatic BPTT code written in PyTorch and Tensorflow by discretizing \eqref{eq:hybridRNN_cont} with an ODE solver that
is compatible with these autodifferentiation tools (e.g. \texttt{torchdiffeq} by \citep{rubanova_latent_2019},
\texttt{NbedDyn} by \citep{ouala_learning_2020}, and \texttt{AD-ENKF} by \citep{chenAutodifferentiableEnsembleKalman2021}).
This compatibility can also be achieved by use of explicit Runge-Kutta schemes \citep{queiruga_continuous--depth_2020}.
Note that the discretization of \eqref{eq:hybridRNN_cont} can (and perhaps should) be much finer than the data sampling rate $\Delta t$,
but that this requires reliable estimation
of $x(t), \dot{x}(t)$ from discrete data.
\erm
\end{remark}

\begin{remark}
\label{rem:da}
The need for data assimilation
\citep{asch_data_2016,law_data_2015,reich_probabilistic_2015}
to learn the initialization
of recurrent neural networks
may be understood as follows.
Since $m^\dag$ is not known and $y$ is not observed
(and in particular $y(0)$ is not known)
the desired initialization for \eqref{eq:fgmdag2}, and thus also
for approximations of this equation in which $g^\dag$ and $h^\dag$ are replaced
by neural networks, is not known. Hence, if an RNN is trained
on a particular trajectory, the initial condition that is required
for accurate approximation of \eqref{eq:fgmdag} from an unseen initial
condition is not known. Furthermore the invariant manifold $m=m^\dag(x,y)$
may be unstable under numerical approximation.
However if some observations of the
trajectory starting at the new initial condition are used,
then data assimilation techniques can
potentially learn the initialization for the RNN
and also stabilize the invariant manifold.
Ad hoc initialization methods are common practice \citep{haykin_modeling_2007,cho_learning_2014,bahdanau_neural_2016,pathak_model-free_2018},
and rely on forcing the learned RNN with a short sequence of observed data to synchronize the hidden state.
The success of these approaches likely rely on RNNs' abilities to emulate data assimilators \citep{harter_data_2012};
however, a more careful treatment of the initialization problem may enable substantial advances.
\erm
\end{remark}

\begin{remark}
\label{rem:rc}
Reservoir computing (RC) is a variant on RNNs which has the advantage of
leading to a quadratic optimization problem \citep{jaeger__2001,lukosevicius_reservoir_2009,grigoryeva_echo_2018}.
Within the context of the continuous-time RNN \eqref{eq:hybridRNN_cont} they correspond
to randomizing $(A,B,c)$ in (\ref{eq:hybridRNN_cont}b) and then
choosing only parameter $C$ to fit the data.
To be concrete, this leads to
$$r(t)=\cg_t \bigl(\{x(s)\}_{s=0}^t; \ r(0),A,B,c\bigr);$$
here $\cg_t$ may be viewed as a random function of the
path-history of $x$ upto time $t$ and of the initial condition for $r.$
Then $C$ is determined by minimizing the quadratic function
\begin{equation*}
  \mathcal{J}_T(C) = \frac{1}{2T} \int_0^T \left\|\dot{x}(t) - f_0(x(t)) - Cr(t) \right\|^2dt + \frac{\lambda}{2} \|C\|^2.
\end{equation*}
This may be viewed as a random feature approach on the Banach space
$\mathsf{X}_T$; the use of random features for learning of mappings
between Banach spaces is studied by \citet{nelsen_random_2020},
and connections between random features and reservoir computing were introduced by \citet{dong_reservoir_2020}.
In the specific setting described here, care will be needed in choosing
probability measure on $(A,B,c)$ to ensure a well-behaved map
$\cg_t$; furthermore data assimilation ideas
\citep{asch_data_2016,law_data_2015,reich_probabilistic_2015} will be needed to
learn an appropriate $r(0)$ in the prediction phase, as discussed in
Remark \ref{rem:da} for RNNs.
\erm
\end{remark}

\section{Numerical Experiments}
\label{sec:experiments}
In this section, we present numerical experiments intended to test different hypotheses
about the utility of hybrid mechanistic and data-driven modeling.
We summarize our findings in Section \ref{ssec:find}.
We define the overarching experimental setup in \Cref{ssec:setup},
then introduce our criteria for evaluating model performance in \Cref{ssec:eval}.
In the Lorenz '63 (L63) experiments (\Cref{ssec:l63}), we investigate how a simple Markovian random features model error term can be recovered using discrete
and continuous-time methods, and how those methods scale with the magnitude of error, data sampling rate,
availability of training data, and number of learned parameters.
In the Lorenz '96 Multiscale (L96MS) experiments (\Cref{ssec:l96}), we take this a step further by learning a Markovian random features closure term
for a scale-separated system, as well as systems with less scale-separation.
As expected, we find that the Markovian closure approach is highly accurate for a scale-separated regime.
We also see that the Markovian closure has merit even in cases with reduced scale-separation.
However, this situation would clearly benefit from learning a closure term with memory, a topic we turn to in
\Cref{ssec:partialNoisyObs}, where we demonstrate that
non-Markovian closure models can be learnt from noisy, partially observed data;
for low-dimensional cases (e.g. L63), our method of training converges to
return a good model with high short-term accuracy and long-term statistical
validity. For higher-dimensional cases (e.g. L96MS), we find the method to hold promise, but further research is required in this general area.
In \Cref{ssec:irnn}, we demonstrate why non-Markovian closures must be carefully initialized and/or controlled (e.g. via data assimilation) in order to ensure their long-term stability and short-term accuracy.

\subsection{Summary of Findings from Numerical Experiments}
\label{ssec:find}
\begin{enumerate}

 \item We find that hybrid modeling has better predictive performance than purely data-driven methods in a wide range of settings (see \Cref{fig:compare_eps_pathak,fig:compare_eps_GP} of \Cref{ssec:l63}): this includes scenarios where $f_0$ is highly accurate (but imperfect) and scenarios where $f_0$ is highly inaccurate (but nevertheless faithfully encodes much of the true structure for $f^\dag$).

  \item We find that hybrid modeling is more data-efficient than purely data-driven approaches (\Cref{fig:compare_T} of \Cref{ssec:l63}).

  \item We find that hybrid modeling is more parameter-efficient than purely data-driven approaches (\Cref{fig:compare_rfDim} of \Cref{ssec:l63}).

  \item Purely data-driven discrete-time modeling
can suffer from instabilities in the small timestep limit
$\Delta t \ll 1$;
  hybrid discrete-time approaches can alleviate this issue
when they are built from an integrator
  $\Psi_0$, as this will necessarily encode
the correct parametric dependence on $\Delta t \ll 1$ (\Cref{fig:compare_dt} of \Cref{ssec:l63}).

  \item In order to leverage standard supervised regression techniques, continuous-time methods require good estimates of derivatives $\dot{x}(t)$ from the data. \Cref{fig:compare_dt} of \Cref{ssec:l63}
quantifies this estimation as a function of data sample rate.

  \item Non-Markovian model error can be captured by Markovian terms in scale-separated cases.
  \Cref{ssec:l96} demonstrates this quantitatively in \Cref{fig:l96bignew},
  and qualitatively in \Cref{fig:l96residuals}.
  Beyond the scale-separation limit, Markovian terms will fail for trajectory forecasting.
  However, Markovian terms may still reproduce invariant statistics in dissipative systems
  \agree{(for example, in cases with short memory-length)}.
  \Cref{ssec:l96} demonstrates this quantitatively in \Cref{fig:l96bignew};
  \Cref{fig:l96residuals} offers intuition for these findings.

\item
Non-Markovian description of model error is needed to accurately represent problems
where the hidden dynamics is not scale-separated from the observed dynamics.
\Cref{ssec:partialNoisyObs} shows how partial and noisy observations can be exploited by augmented ODE models of form \eqref{eq:ml_cont_mem}
when the noise and hidden dynamics are learnt implicitly by auto-differentiable data assimilation.
We observe high-quality reconstruction of the L63 system along its first-component when choosing a correct (\Cref{fig:l63dy2}) or overly enlarged (\Cref{fig:l63dy10}) hidden dimension.
We also observe promising reconstruction of the L96MS system in its slow components (\Cref{fig:l96dy72}); however, long-time solutions to the learnt model exhibited instabilities inconsistent with the true system.

\item Non-Markovian models must be carefully initialized, and indeed
data assimilation is needed, in order to ensure accuracy
(\Cref{ssec:irnn}) of invariant statistics (\Cref{fig:da_l63_ay_inv}), long-term stability (\Cref{fig:da_l63_ay2_traj_long}), and accurate short-term predictions (\Cref{fig:da_l63_ay2_traj_short}).
We explain observed phenomena in terms of the properties of
the desired lower-dimensional
invariant manifold which is embedded within the higher dimensional
system used as the RNN's basis of approximation.

\end{enumerate}

\subsection{Learning Markovian Model Errors from Noise-Free Data}

\subsubsection{Experimental Set-Up}
\label{ssec:setup}
In the Markovian error modeling experiments described in \Cref{ssec:l63,ssec:l96}, whether using continuous- or discrete-time
models,  we train a random features model on noise-free trajectories from the true system (an ODE). The problems we study
provably have a compact global attractor and are provably
(L63) or empirically (L96MS) ergodic; the invariant
distribution is supported on the global attractor and captures
the statistics  of long-time trajectories which, by ergodicity,
are independent of initial condition.
The data trajectories are generated using scipy's implementation of Runge-Kutta 5(4) (via $\texttt{solve\_ivp}$) with absolute and relative tolerances both $10^{-9}$ and maximum step size $10^{-4}$ \citep{dormand_family_1980,virtanen_scipy_2020}.
In order to obtain statistical results, we create 5 training trajectories from the true system of interest with initial conditions sampled independently from its attractor.
Note that each training trajectory is long enough to explore the
attractor, and each is used to train a separate model; the purpose is to observe the variance in learnt models with respect to randomly sampled paths through the attractor.
We use the same sampling procedure to generate short independent validation and testing trajectories---we use 7 validation trajectories and 10 testing trajectories (these are short because we only use them to evaluate a model's short term forecast performance; when assessing long-term statistics of a learnt model, we compare to very long simulations from the true system).
All plots use error bars to represent empirical estimates of the mean and standard deviation of the presented performance metric, as computed by
ensembling the performance of the 5 models (one per training trajectory) over the 10 testing trajectories for a total of 70 random performance evaluations.

Each training procedure also involves an independent draw of the
random feature functions as defined in \eqref{eq:rfd}.
A validation step is subsequently performed to optimize the hyperparameters $\omega, \beta$, as well as the regularization parameter $\lambda$.
We automate this validation using Bayesian Optimization \citep{mockus_bayesian_1989,nogueira_bayesian_2014}, and find that it typically identifies good hyperparameters within 30 iterations. The entire process of entraining a model
to a single, long training trajectory (including hyperparameter validation)
typically takes approximately $30$ minutes
on a single core of a 2.1GHz Skylake CPU with an allocated 1GB RAM.
Given a realization of random features and an optimal $\lambda$, we obtain the minimizer $C^\ast$ using the Moore-Penrose Pseudoinverse implemented in scipy ($\texttt{pinv2}$).
This learned $C^\ast$, paired with its random feature realization, is then used to predict 10 unseen testing trajectories (it is given the true initial condition for each of these testing trajectories).

When implementing in continuous-time, given high frequency but
discrete-time data, two computational issues must be addressed:
(i) extrapolation of the data to continuous-time; (ii) discretization
of the resulting integrals. The approach we adopt avoids ``inverse
crimes'' in which favourable behaviour is observed because
of agreement between the data generation mechanism (with a specific
integrator) and the approximation of the objective functions
\citep{colton_inverse_2013,kaipio_statistical_2005,wirgin_inverse_2004};
see \citet{queiruga_continuous--depth_2020} for further illustration
of this issue and \citet{keller_discovery_2021,du_discovery_2021} for a rigorous analysis of
this inversion process in the context of linear multistep integration methods for deep learning.
We interpolate the data with a spline, to obtain
continuous-time trajectories, and then discretize the integrals
using a simple Riemann sum; this strikes a desirable
balance between robustness and efficiency and avoids inverse crimes.
The discrete-time approaches, however,
are able to learn not only model-discrepancy, but also integrator-based discrepancies;
hence, the discrete-time methods may artificially appear to outperform continuous-time approaches,
when, in fact, their performances might simply be considered to be
comparable.

\subsubsection{Evaluation Criteria}
\label{ssec:eval}
Models are evaluated against the test set for their ability to predict individual trajectories, as well as invariant statistics (the invariant measure and the auto-correlation function).

\textbf{Trajectory Validity Time:} Given threshold $\gamma>0$, we
 find the first time $t_{\gamma}$ at which
the norm of discrepancy between true and approximate
solutions reaches $\gamma$:
$$t_{\gamma}= {\rm argmin}_{t \in [0,T]} \bigg \{ t \colon \ \| x(t) - x_m(t) \|  \geq \gamma \overline{\| x(t)\|} \bigg\},$$
where $x(t)$ is the true solution to \eqref{eq:fmdag}, $x_{m}(t)$ is the learned approximation,
and the normed time average $\overline{\| x(t)\|}$ is approximated from training data.
If the threshold is not violated on $[0,T]$, we define $t_\gamma := T$; this is rare in practice.
We take $\gamma = 0.05$ (i.e. $5\%$ relative divergence).

\textbf{Invariant Distribution:} To quantify errors in our reconstruction of the invariant measure,
we consider the Kullback-Leibler (KL) divergence \citep{kullback_information_1951} between the true invariant measure $\mu$
and the invariant measure produced by our learned model $\mu_m$.
We approximate the divergence
$$d_\text{KL}(\mu, \mu_m) := \int_\R \log\bigg(\frac{d\mu}{d\mu_m}\bigg)d\mu$$
by integrating kernel density estimates
with respect to the Lebesgue measure.

\textbf{Autocorrelation:}
We compare the autocorrelation function (ACF) with respect to the invariant distribution of the true and learned models.
We approximate the ACF using a fast-fourier-transform for convolutions \citet{seabold_statsmodels_2010},
and compare them via a normalized $L_2$ norm of their difference.


\subsubsection{Lorenz '63 (L63)}
\label{ssec:l63}

\paragraph{\textbf{Setting}}
The L63 system \citep{lorenz_deterministic_1963} is described by the following ODE
\begin{equation}
  \begin{aligned}
    \label{eq:l63}
    \dot{u}_x &= a(u_y-u_x) \\
    \dot{u}_y &= bu_x - u_y - u_xu_z \\
    \dot{u}_z &= -cu_z + u_xu_y
  \end{aligned}
\end{equation}
whose solutions are known to exhibit chaotic behavior for parameters $a=10,\ b=28,\ c=\frac{8}{3}$.
We align these equations with our framework, starting from
equation \eqref{eq:fdag}, by letting
$x = (u_x, u_y, u_z)^T$ and
defining $f^\dag(x)$ to be the vector field appearing on
the right-hand-side in \eqref{eq:l63}.
We define a discrete solution operator $\Psi^\dag$ by numerical integration of $f^\dag$ over a fixed time window $\Delta t$ corresponding to a uniform data sampling rate, so that the true system is given by
\eqref{eq:fdag} in continuous-time and \eqref{eq:Psidag} in discrete-time.

To simulate scenarios in which our available physics are good, but imperfect,
we assume there exists additive unknown model error of form
\begin{equation}
  \label{eq:mEps}
  m^\dag(x) = \epsm \ m_1(x)
\end{equation}
with function $m_1$ determining the structure of model error,
and scalar coefficient $\epsm$ determining its magnitude.
Recall that $f^\dag=f_0+m^\dag$ and we assume $f_0$ is known to us.
Our task is then to learn $f^\dag$ by learning $m^\dag$ and adding
it to $f_0$.
The discrete solution operator $\Psi_0$ is obtained as in \eqref{eq:Psi0} by
numerical integration of $f_0$ over a fixed time window $\Delta t$.

To simplify exposition, we explicitly define $m^\dag$,
then let $f_0 := f^\dag - m^\dag$.
We first consider the setting where
\begin{equation}
  \label{eq:mPathak}
  m_1(x) := \ \begin{bmatrix} 0 \\ b u_x \\ 0\end{bmatrix}
\end{equation}
(as in \citep{pathak_hybrid_2018})
and modulate $\epsm$ in \eqref{eq:mEps} to control the magnitude of the error term.
In this case, $f_0$ can be viewed as the L63 equations
with perturbed parameter $\tilde{b} = b(1-\epsm)$, where $b$ is
artificially decreased by $100\epsm\%$.

Then, we consider a more general case of heterogeneous, multi-dimensional residual error by
drawing $m_1$ from a zero-mean Gaussian Process (GP) with a radial basis kernel (lengthscale 10).
We form a map from $\R^3$ into itself by constructing
three independent draws from a scalar-valued GP on $\R^3.$
The resulting function is visualized in two-dimensional
projections in \Cref{fig:gp_contour}.

\begin{figure}[!htbp]
\centering
  \includegraphics[width=1\textwidth]{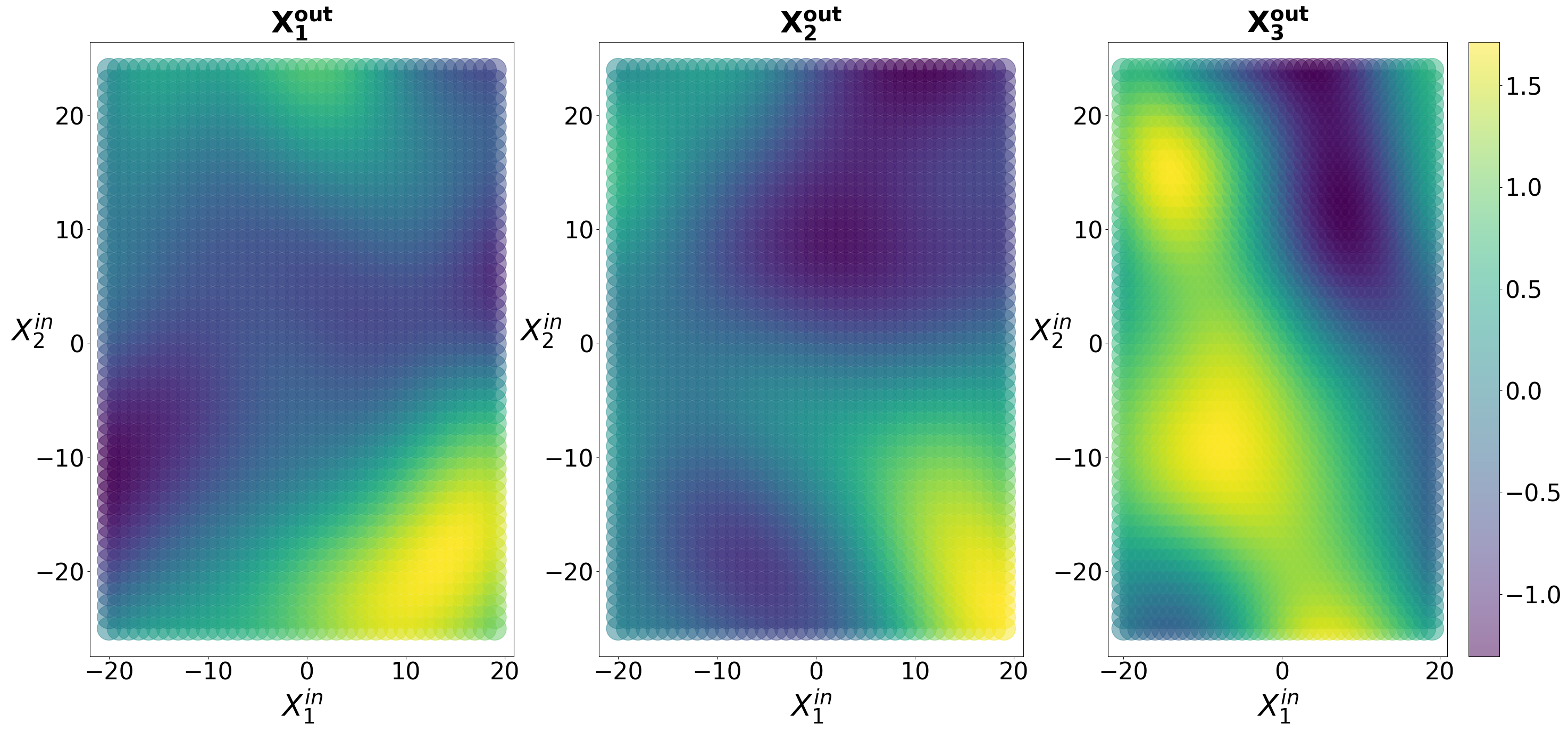}
 \caption{
 Here we visualize an example of the function $m_1$ in \eqref{eq:mEps}, which is obtained as a single random draw from a zero-mean Gaussian Process mapping $\R^3 \to \R^3$.
 We have plotted its output surface as three scalar functions (left to right) of the first two inputs (the plot axes) with the third input component fixed at 0.
 }
\label{fig:gp_contour}
\end{figure}

Observe that in the continuous-time framing, changes to $\epsm$ do not impact the complexity of the learned error term; however, it does grow the magnitude of the error term.
In the discrete-time framing, larger values of $\epsm$ can magnify the complexity of the discrepancy $\Psi_0(x) - \Psi^\dag(x)$.

\paragraph{\textbf{Results}}

We perform a series of experiments with the L63 system in order to illustrate key points about using data to learn model errors in dynamical systems.
First, we demonstrate that hybrid modeling tends to outperform data-only and physics-only methods in terms of prediction quality.
We first consider model error as in \eqref{eq:mPathak}; see \Cref{fig:compare_eps_pathak}
in which we study performance (validity time) versus
model error amplitude ($\epsm$),
using random feature maps with $D=200$, and a
single trajectory of length $T=100$ sampled at
timestep $\Delta t=0.001$.  Unless otherwise specified, this is also
the configuration used in subsequent experiments.

We see identical trends in \Cref{fig:compare_eps_GP} for a more general case with the non-parametric model error term constructed
from Gaussian processes.  Interestingly, we see that for small
and moderate amounts of model error $\epsm$,
the hybrid methods substantially outperform data-only and physics-only methods.
Eventually, for large enough model discrepancy,
the hybrid-methods and data-only methods have similar
performance; indeed the hybrid-method may be outperformed
by the data-only method at large discrepancies.
For the simple model error this appears to occur when the
discrepancy term is larger in magnitude
than $f_0$ (e.g. for $b=28$ and $\epsm=2$, the model error term $\epsm b u_x$ can take on values larger than $f^\dag$ itself).

\Cref{fig:compare_eps_GP} also shows that a continuous-time approach is favored over discrete-time when using data-only methods, but suggests the converse in the hybrid modeling context. We suspect this is an artifact of the different integration schemes used in data generation, training, and testing phases; the data are generated with a higher-fidelity integrator than the one available in training and testing.
For the continuous-time method, this presents a fundamental limitation to the forecast quality (we chose this to avoid having artificially high forecast validity times).
However, the discrete-time method can overcome this by not only learning the mechanistic model discrepancy, but also the discrepancy term associated with a mis-matched integrator.
This typically happens when a closure is perfectly learnable and deterministic (i.e. our Lorenz '63 example);
in this case, the combination of physics-based and integrator-sourced closures can be learned nearly perfectly.
In later experiments with a multiscale system, the closures are considered approximate (they model the mean of a noisy underlying process)
and the discrete- and continuous-time methods perform more similarly, because the inevitable imperfections of the learned closure term dominate the error rather than the mis-specified integrator.
Note that approximate closures driven by scale-separation are much more realistic;
thus we should not expect the hybrid discrete-time method to dramatically outperform hybrid continuous-time methods unless other limitations are present (e.g. slow sampling rate).

Importantly, the parameter regime
for which hybrid methods sustain advantage over the imperfect
physics-only method is substantial; the latter has trajectory
predictive performance which drops off rapidly for very small $\epsm$.
This suggests that an apparently uninformative model can be efficiently
modified, by machine learning techniques,
to obtain a useful model that outperforms
a \emph{de novo} learning approach.

\begin{figure}
\centering
\begin{subfigure}{0.47\textwidth}
  \includegraphics[width=\textwidth]{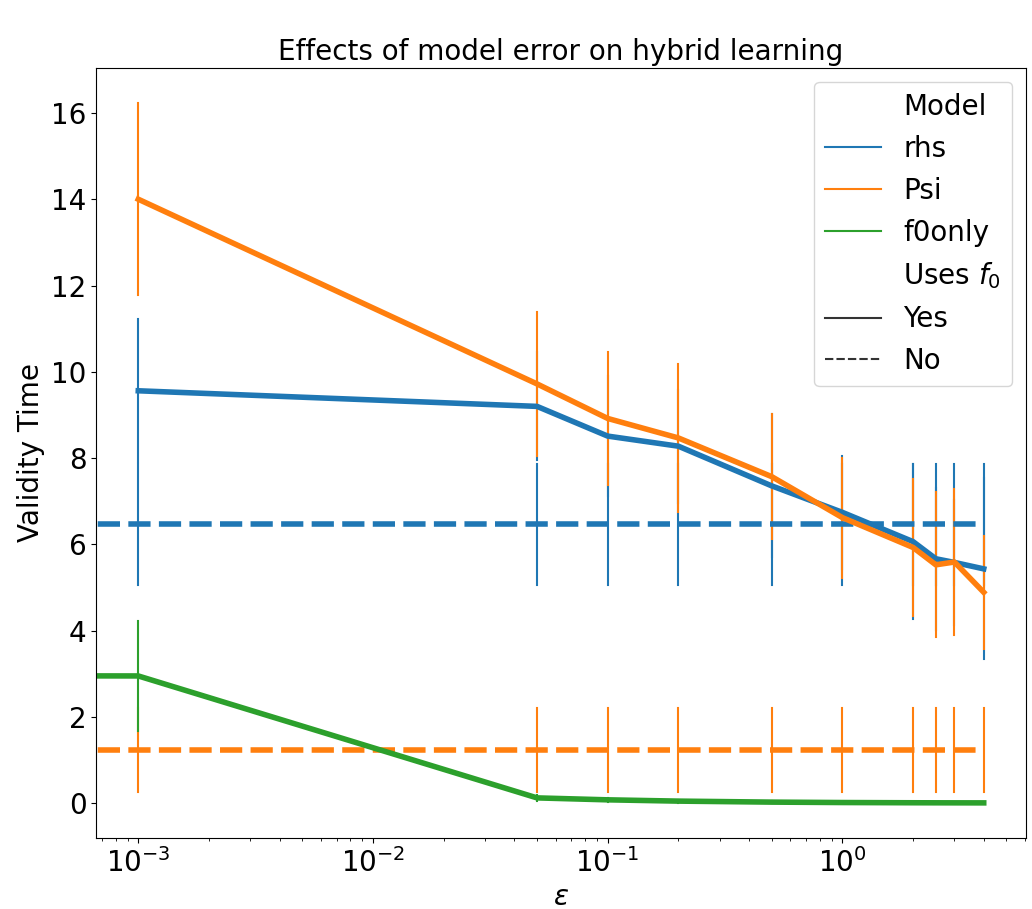}
  \caption{}
  \label{fig:compare_eps_pathak}
\end{subfigure} \hfill 
\begin{subfigure}{0.47\textwidth}
  \includegraphics[width=\textwidth]{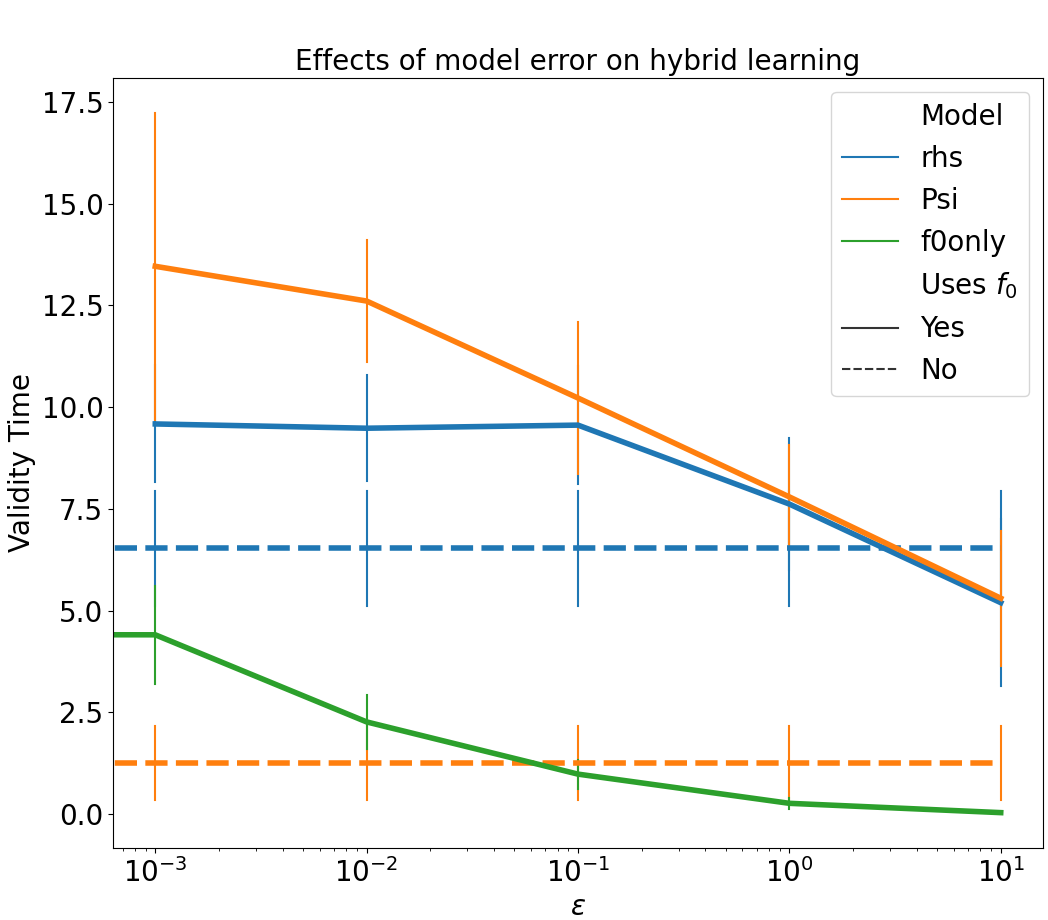}
  \caption{}
  \label{fig:compare_eps_GP}
\end{subfigure}

 \caption{These plots shows the temporal length of the forecast validity of our learnt models of L63 \eqref{eq:l63}, each as a function of model error, as parameterized by $\epsm$ \eqref{eq:mEps}
 (with $D=200$, $T=100$, and $\Delta t=0.001$).
Continuous-time methods are shown in blue, discrete-time approaches in orange.
Dotted lines indicate purely data-driven methods to learn the
entire vector field defining the dynamics; solid lines indicate methods
that learn perturbations to the imperfect mechanistic models $f_0$ or $\Psi_0$.
Integration using the imperfect mechanistic model, without recourse
to data, is shown in green.
In \Cref{fig:compare_eps_pathak}, we employ the linear form of model error $m_1$ defined in \eqref{eq:mPathak}.
In \Cref{fig:compare_eps_GP}, we let $m_1$ be a single draw from a Gaussian Process, whose structure is shown in
\Cref{fig:gp_contour}. Here, we plot means, with error bars as 1 standard deviation.
 }
 \label{fig:compare_eps}
\end{figure}

Next, we show that hybrid methods simplify the machine learning task in terms of complexity of the learned function and, consequently, the amount of data needed for the learning.
\Cref{fig:compare_T} examines prediction performance (validity time)
as a function of training data quantity using random feature maps with $D=2000$ and a fixed parametric model error ($\epsm = 0.2$  in \eqref{eq:mEps})  and sampling rate $\Delta t = 0.01$.
We see that the hybrid methods substantially outperform the data-only approaches in regimes with limited training data.
For the continuous-time example, we see an expected trend, where
the data-only methods are able to catch up to the hybrid methods with the acquisition of more data.
The discrete-time models do not exhibit this behavior, but we expect the data-only discrete-time model to eventually catch up, albeit with additional training data and number of parameters.
Note that greater expressivity is also required from data-only methods---our choice of a large $D=2000$ aims to give all methods ample expressivity, and thus test convergence with respect to training data quantity alone.
These results demonstrate that the advantage of hybrid modeling is magnified when training data are limited and cannot fully inform \emph{de novo} learning.
\Cref{fig:compare_rfDim} further studies the impact of expressivity by again fixing a parametric model error ($\epsm=0.05$ in \eqref{eq:mEps}), training length $T=100$, and sampling rate $\Delta t= 0.001$.
We see that all methods improve with a larger number of random features,
but that relative superiority of hybrid methods is maintained even for $D=10000$.

\begin{figure}[!htbp]
\centering
  \includegraphics[width=0.7\textwidth]{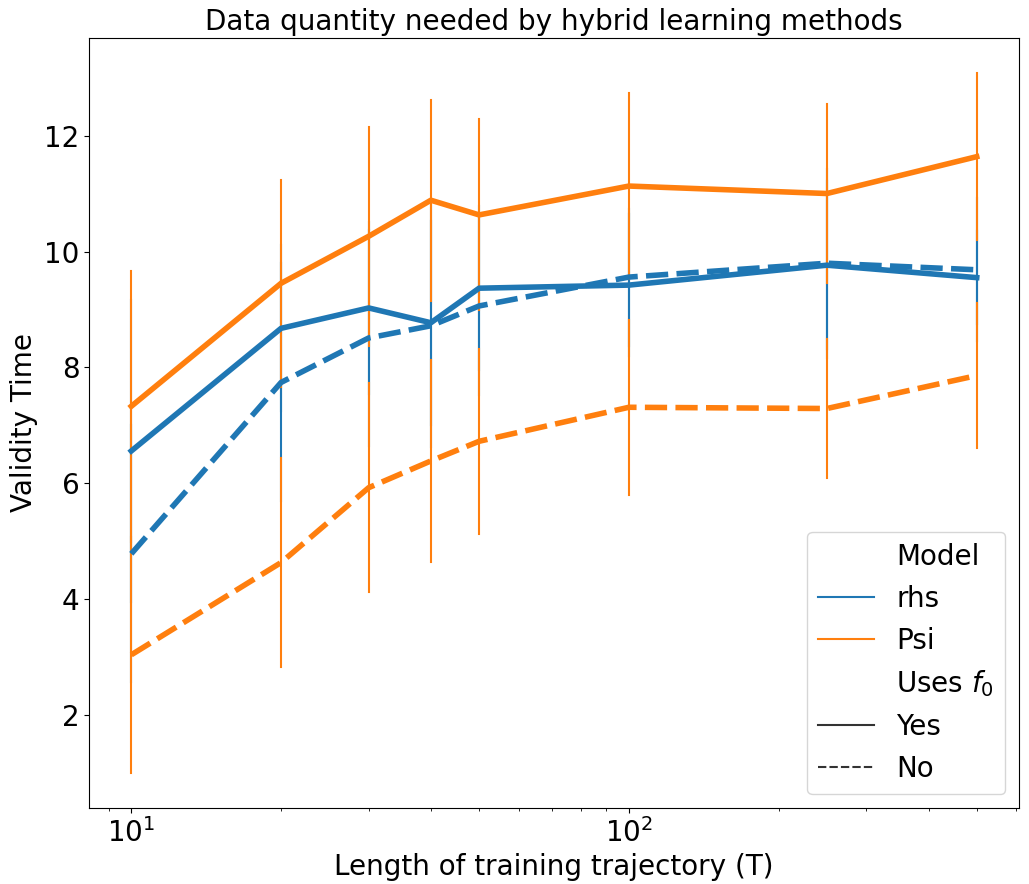}
 \caption{Here we examine the performance of the proposed methods as a function of the length of the interval over which the training data is provided, where $\Delta t=0.01$, ($\epsm=0.2$ in \eqref{eq:mEps}), and $D=2000$ are held constant for the L63 example \eqref{eq:l63}.
 See description of \Cref{fig:compare_eps} for explanation of legend.
We observe that all methods improve with increasing training lengths.
We see that, in continuous-time, the primary benefit in
hybrid modeling is when the training data are limited.}
\label{fig:compare_T}
\end{figure}

\begin{figure}[!htbp]
\centering
  \includegraphics[trim=1.5cm 1cm 3cm 2cm, clip, width=0.7\textwidth]{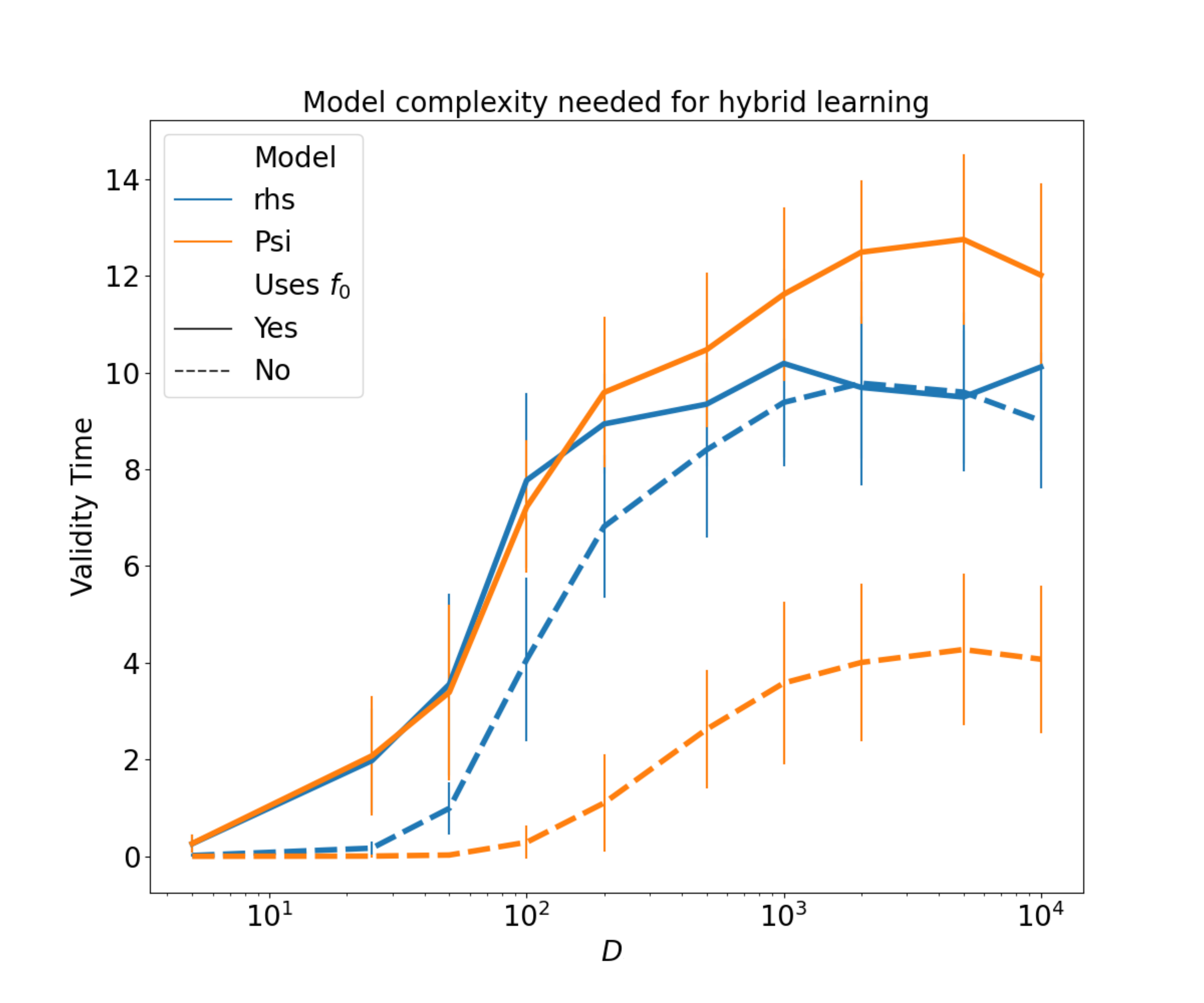}
  \caption{Here we examine the performance of the proposed methods as a function of model complexity, where $\Delta t=0.001$, $\epsm=0.05$, and $T=100$ are held constant for the L63 example \eqref{eq:l63}.
  See description of \Cref{fig:compare_eps} for explanation of legend.
  We observe that all methods improve with increasing number of parameters,
  and that hybrid methods are especially beneficial when available complexity is limited.}
\label{fig:compare_rfDim}
\end{figure}

Finally, we study trade-offs between learning in discrete- versus continuous-time for the L63 example \eqref{eq:l63}.
\Cref{fig:compare_dt} examines prediction performance (validity time)
as a function of data sampling rate $\Delta t$ using random feature maps with $D=200$ with a fixed parametric model error ($\epsm = 0.05$ in \eqref{eq:mEps}) and an abundance of training data $T=1000$.
We observe that for fast sampling rates ($\Delta t < 0.01$),
the continuous-time and discrete-time hybrid methods have similar performance.
For $\Delta t > 0.01$, derivatives become difficult to estimate from the data and the performance of the continuous-time methods rapidly decline.
However, the discrete-time methods sustain their predictive performance for slower sampling rates ($\Delta t \in (0.01, 0.1)$). At some point, the discrete-time methods deteriorate as well, as the discrete map
becomes complex to learn at longer terms because of the
sensitivity to initial conditions that is a hallmark of chaotic
systems. Here, the discrete-time methods begin to fail around $\Delta t=0.2$; note that they can be extended to longer time intervals by increasing $D$ and amount of training data, but returns diminish quickly.

 \begin{figure}[!htbp]
 \centering
   \includegraphics[width=0.7\textwidth]{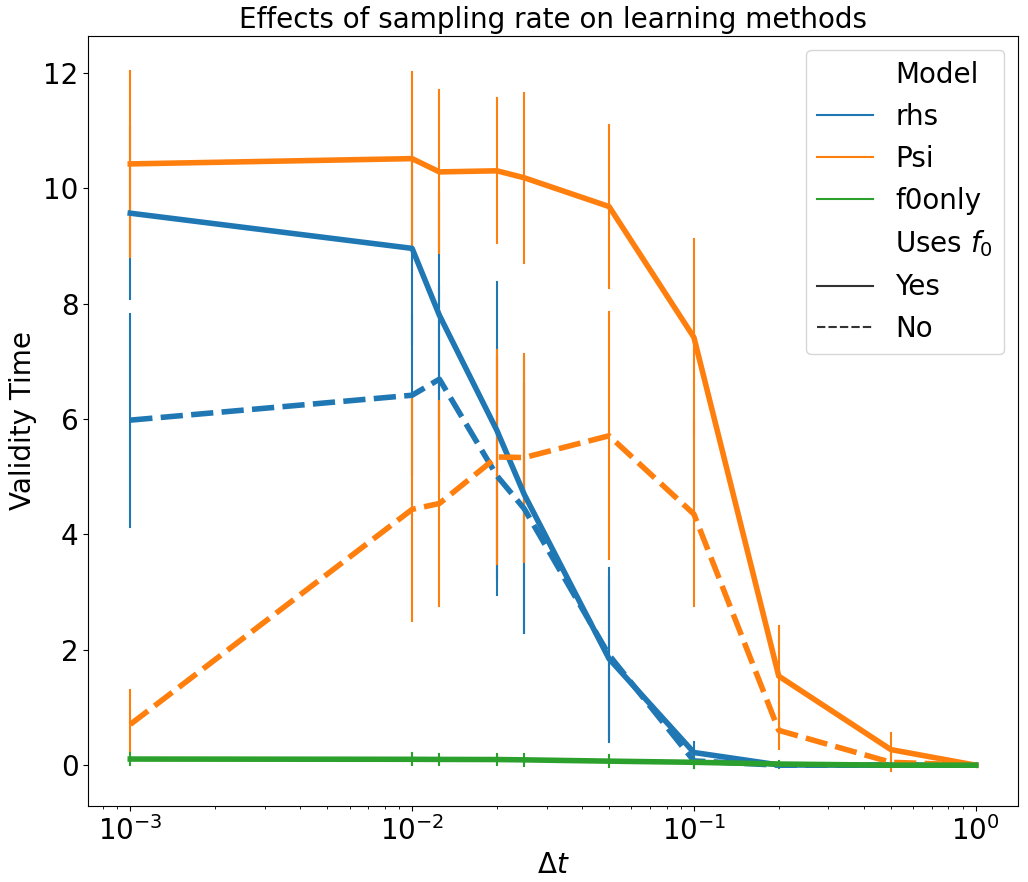}
  \caption{This shows temporal forecast validity as a function of the step size of training data for the tested methods in the L63 example \eqref{eq:l63}.
  We hold fixed $D=200$, $\epsm=0.05$, and $T=1000$.
  See description of \Cref{fig:compare_eps} for explanation of legend.
  We see that while purely data-driven discrete-time methods struggle at short time steps, the hybrid version thrives in this scenario. All approaches, of course, eventually decay as large time steps create more complex forward maps, due to sensitivity to initial
conditions. We also see continuous-time methods work well for small time steps, then deteriorate in tandem with quality of estimated derivatives.
  }
 \label{fig:compare_dt}
 \end{figure}

\subsubsection{Lorenz '96 Multiscale (L96MS) System}
\label{ssec:l96}
\paragraph{\textbf{Setting}}
Here, we consider the multiscale system \citep{lorenz_predictability-problem_1996}
of form \eqref{eq:fgdag}, where each variable $X_k \in \R$ is coupled to a subgroup of fast variables $Y_{k} \in \R^J$. We have $X \in \R^K$ and $Y \in \R^{K \times J}$. For $k=1 \dots K$ and $j=1 \dots J$, we write
\begin{subequations}
  \label{eq:l96ms}
\begin{align}
\dot{X}_k &= f_k(X) + h_x \overline{Y}_k \\
\dot{Y}_{k,j} &= \frac{1}{\scaleeps} r_{j}(X_k, Y_k) \\
\overline{Y}_k &= \frac{1}{J} \sum_{j=1}^J Y_{k,j} \\
f_k(X) &= -X_{k-1}( X_{k-2} - X_{k+1}) - X_{k} + F \\
r_{j}(X_k, Y_k) &= -Y_{k,j+1}(Y_{k,j+2} - Y_{k,j-1}) - Y_{k,j}  +  h_y X_{k} \\
X_{k + K} &= X_k, \quad Y_{k+K,j} = Y_{k,j}, \quad Y_{k,j+J} = Y_{k+1,j}
\end{align}
\end{subequations}
where $\scaleeps > 0$ is a scale-separation parameter, $h_x, h_y \in \R$ govern the couplings between the fast and slow systems, and $F > 0$ provides a constant forcing.
We set $K = 9, \ J = 8,\  h_x = -0.8,\ h_y = 1,\ F = 10$; this leads to
chaotic dynamics for $\scaleeps$ small.
When studying scale-separation, we consider $\scaleeps \in \{ 2^{-7}, \ 2^{-5},\ 2^{-3},\ 2^{-1} \}$.

We consider the setting in which we learn Markovian {random features
models in variable $X$ alone, from $X$ data generated by the
coupled $(X,Y)$ system.
Large scale-separation between the observed ($X$) and
unobserved ($Y$) spaces can simplify the problem of accounting
for the unobserved components;
in particular, for sufficient scale-separation, we expect
 a Markovian term to recover a large majority of the residual errors.
In fact, we further simplify this problem by learning a scalar-valued model error $M$
that is applied to each $X_k$ identically in the slow system:
$$\dot{X}_k \approx f_k(X) + M(X_k).$$
This choice stems from observations about statistical interchangeability
amongst the slow variables of the system; these properties of
the L96MS model in the scale-separated regime are
discussed in \citep{fatkullin_computational_2004}.
We can directly align our reduction of \eqref{eq:l96ms} with the Markovian hybrid learning framework in \eqref{eq:fmdag}
as follows:
\begin{align*}
  \dot{X} &\approx f_0(X) + m(X) \\
  f_0(X) &:= [f_1(X), \ \cdots \ , f_K(X) ]^T \\
  m(X) &:= [M(X_1), \ \cdots \ , M(X_K) ]^T.
\end{align*}

\paragraph{\textbf{Results}}

We plot the performance gains of our hybrid learning approaches in \Cref{fig:l96bignew} by considering
validity times of trajectory forecasts, estimation of the invariant measure,
and ACF estimation.
In all three metrics (and for all scale-separations $\scaleeps$), \emph{de novo} learning in discrete ($\Psi^\dag \approx m$) and continuous-time ($f^\dag \approx m$)
is inferior to using the nominal mechanistic model $f_0$.
We found that the amount of data used in these experiments is insufficient to learn the full system from scratch.
On the other hand, hybrid models in discrete ($\Psi^\dag \approx \Psi_0 + m$) and continuous-time ($f^\dag \approx f_0 + m$)
noticeably outperformed the nominal physics.

Surprisingly, \Cref{fig:l96bignew} shows that the Markovian closure methods still qualitatively reproduce the invariant statistics even for large $\scaleeps$ settings where we would expect substantial memory effects.
\Cref{fig:l96bignew} also demonstrates this quantitatively using KL-divergence between invariant measures and mean-squared-error between ACFs.
It seems that for this dissipative system, memory effects tend to average out in the invariant statistics.
However, the improvements in validity time for trajectory-based forecasting deteriorate for $\scaleeps = 2^{-1}$.

To visualize this non-Markovian structure, and how it might be exploited, we examine the residuals from $f_0$ in \Cref{fig:l96residuals}
and observe that there are discernible trajectories walking around the Markovian closure term.
For small $\scaleeps$, these trajectories oscillate rapidly around the closure term.
For large $\scaleeps$ (e.g. $2^{-1}$), however, we observe a slow-moving residual trajectory around the Markovian closure term. This indicates the presence of a stronger memory component, and thus would benefit from
a non-Markovian approach to closure modeling.

\citet{jiang_modeling_2020} show that the memory component in this setting with $\scaleeps = 2^{-1}$
can be described using a closure term with a simple delay embedding of the previous state at lag $0.01$.
They learn the closure using a kernel method cast in an RKHS framework, for which random feature methods provide an approximation.

\begin{figure}[!htbp]
\centering
  \includegraphics[trim=3cm 0 5cm 4cm, clip, width=1\textwidth]{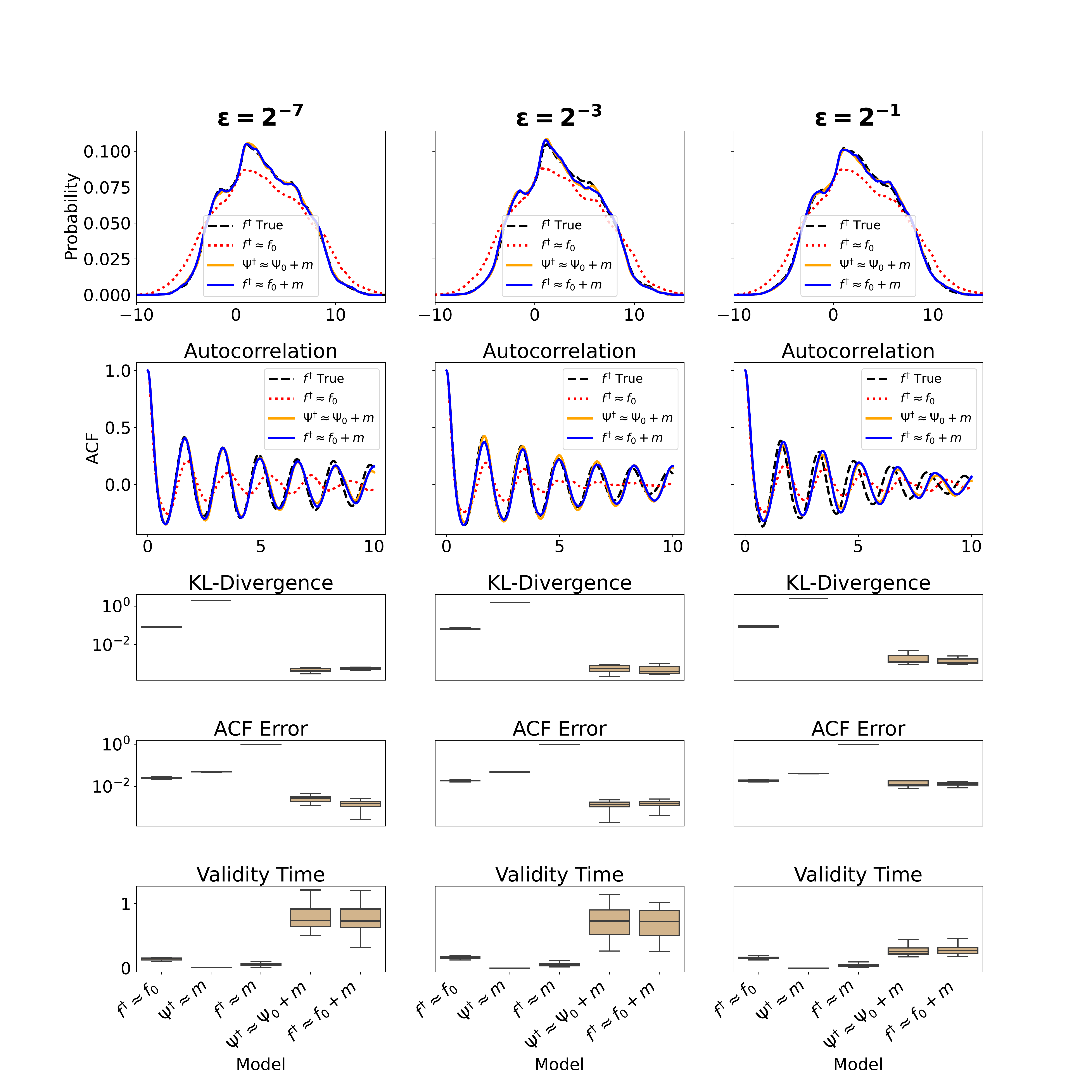}
 \caption{This figure shows the performance of different approaches to modeling the L96MS slow subsystem \eqref{eq:l96ms}.
In $f^\dag \approx f_0$, we only use the nominal physics $f_0$.
In $\Psi^\dag \approx m$ and $f^\dag \approx m$, we try to learn the entire right-hand-side using only data (in discrete- and continuous-time settings, respectively).
In $\Psi^\dag \approx \Psi_0 + m$ and $f^\dag \approx f_0 + m$, we focus on learning Markovian residuals
for the known physics (in discrete- and continuous-time settings, respectively).
\agree{The residual-based correctors substantially outperform the nominal physics and purely data-driven methods according to all presented metrics: invariant measure (shown qualitatively in the first row and quantitatively in the third row),
ACF (shown qualitatively in the second row and quantitatively in the fourth row),
and trajectory forecasts (shown in the final row).
The boxplots show the distributions of quantitative metrics (e.g. KL-divergence, squared errors, validity time), which come from different models, each trained on a different trajectory, and generated using an independent random feature set.
Notably, the Markovian residual-based methods' performance deteriorates for small scale-separation ($\scaleeps=2^{-1}$), where the Markovian assumption breaks down.}
}
\label{fig:l96bignew}
\end{figure}

\begin{figure}[!htbp]
\centering
  \includegraphics[width=\textwidth]{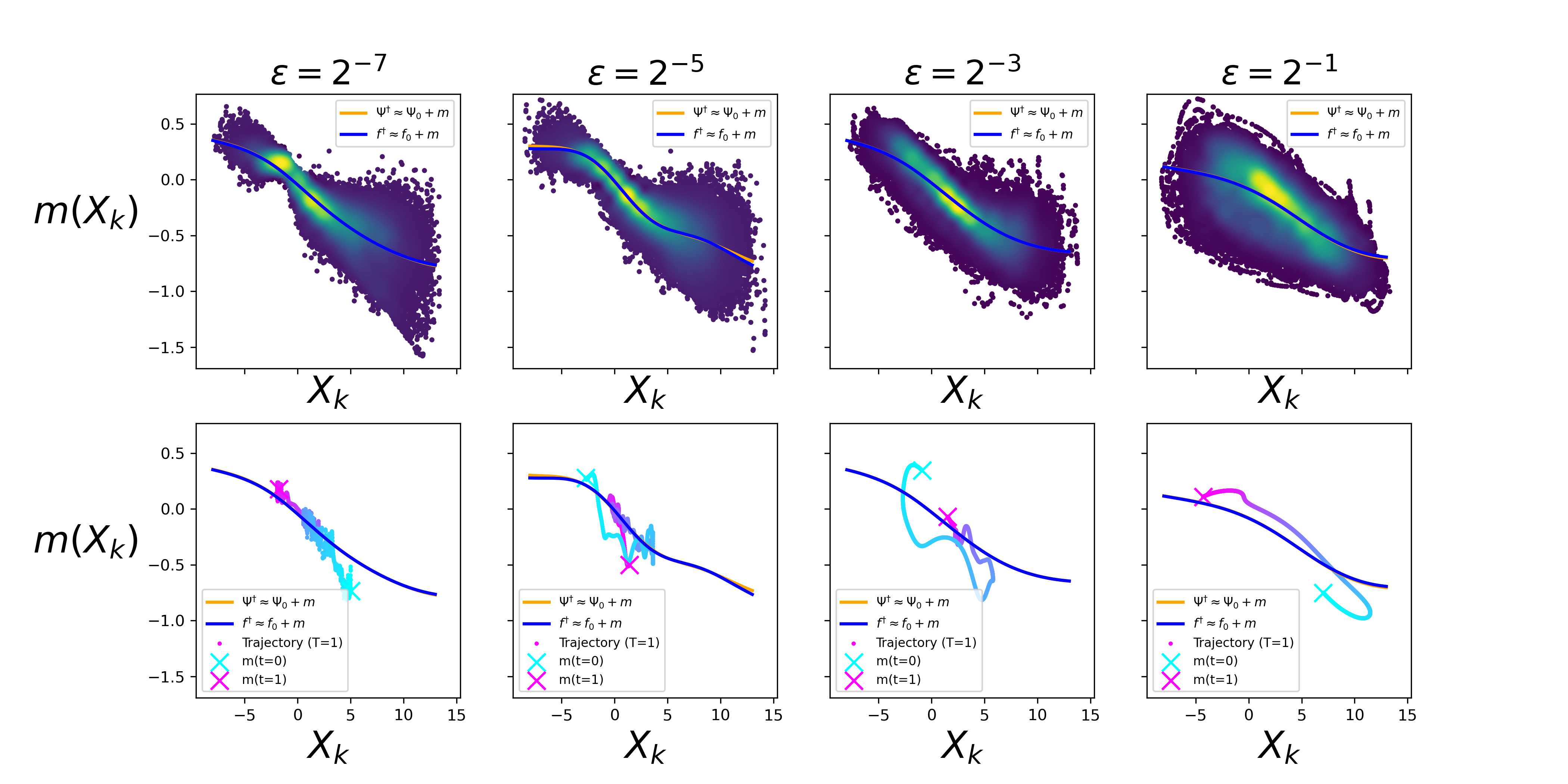}
 \caption{This figure shows the observed and estimated residuals of the nominal physics model $f_0$
 for the L96MS slow subsystem \eqref{eq:l96ms} at different scale-separation factors.
 The first row shows the density of these residuals (yellow is high density, blue is low), as well as the fit of our closure terms in continuous- (blue) and discrete- (orange) time
 (the discrete model was normalized by dividing by $\Delta t$).
The second row shows temporal structure in the errors of our residual fit by superimposing a short (T=1) one-dimensional trajectory (this represents $\sim0.1\%$ of training data).
 }
\label{fig:l96residuals}
\end{figure}

\subsection{Learning From Partial, Noisy Observations}
\label{ssec:partialNoisyObs}

In this section, we focus on the non-Markovian setting outlined in \Cref{ssec:MultiScale},
and attempt to model the dynamics of the observable using \eqref{eq:ml_cont_mem}, with $f_1$, $f_2$
given by two-layer, fully connected neural networks with GeLU activations \citep{hendrycksGELU},
and perform the learning by minimizing \eqref{eq:JcmemMatt} from \Cref{ssec:mlda},
using 3DVAR for the data assimilation \cite{law_data_2015,law_analysis_2013}, with the ADAM optimizer \citep{adamOpt}.
The learning rate was initialized at $0.01$ and tuned automatically using a scheduler that halved the learning rate if the training error had not decreased over 10 (mini-batched) epochs. Data were sampled at $\Delta t=0.01$ in all cases, and normalized to have mean zero and unit variance.
Numerical integration was performed with the \texttt{torchdiffeq} implementation of the Dormand-Prince adaptive fifth-order Runge-Kutta method: for the L63 example, simple backpropagated autodifferentiation was performed through this solver; for the L96MS example, we used the adjoint method provided by \citep{rubanova_latent_2019}.

\subsubsection{Lorenz '63}
We first consider modeling the dynamics of the first-component of the L63 system in \eqref{eq:l63}, where we noisily observe the first-component -- that is, we observe a noisy trajectory of $u_x$ (i.i.d. additive zero-mean, variance-one Gaussian), but do not observe the remaining components $u_y, u_z$.
We jointly trained on 100 trajectories, each of length $T=10$ and randomly initialized from a box around the attractor; we chose this approach to ensure that
we had data coverage both on and off the attractor
although we note that similar success is obtained with a single trajectory
of length $T=1000$.). The neural network had width $50.$
We chose an assimilation time of $\tau_1=3$ and a forecast time of $\tau_2=0.1$.
The optimization ran for approximately 200 epochs, and took roughly 24hrs on a
single GPU.
Adequate results were obtained using a fixed 3DVAR gain matrix
$K = [0.5, 0, 0]^T$. However, we present results using the algorithm
in which $K=\theta_{\textrm{DA}}$
is jointly learned along with parameters $\theta_{\textrm{DYN}}$,
as described in \Cref{ssec:mlda}; this demonstrates that the gain need not be
known \emph{a priori}.

First, we present results using knowledge that the correct hidden dimension
$d_r=2$:  in \Cref{fig:A}, we show an example of the trained model being assimilated (using 3DVAR with learnt $K$) during the first 3 time units,
then predicting for another 7 time units; recall that training was performed
using only a $\tau_2=0.1$ forecasting horizon, but we evaluate on a longer
time horizon to provide a robust test metric. Observe that the learnt hidden dynamics in gray are synchronized with the data, then used to perform the forecast.
In \Cref{fig:B,fig:C}, we show that by solving the system for long time (here, $T=10^4$), we are able to accurately reproduce invariant statistics (invariant measure and autocorrelation, resp.) for the true system.
In \Cref{fig:D}, we show the evolution of the learnt $K$.

Next, we let $d_r=10$, exceeding the true dimension of the hidden states;
thus we are able to explore issues caused by learning an overly expressive
(in terms of dimension of hidden states) dynamical model.
\Cref{fig:l63dy10} shows dynamics for a learnt model in this setting;
we found its reproduction of invariant statistics to be similar to the cases in \Cref{fig:B,fig:C}, but omit the plots for brevity.
This success aligns with the approximation theory, as
discussed in Remark \ref{rem:drdy},
and provides empirical reassurance that the methodology can behave well
in situations where the dimension of the hidden variable is unknown
and dimension $d_r$ used in learning exceeds its true dimension.
Nevertheless, we construct an example in \Cref{ssec:irnn} in which a specific
embedding of the true dynamics in a system of higher dimension
can lead to poor approximation; this is caused by an instability in the
model which allows departure from the invariant manifold on which the
true dynamics is accurately captured. However, we emphasize that this
phenomenon is not observed empirically in the experiment reported
here with $d_r=10.$
Nonetheless we also note expected decreases in efficiency caused by over-estimating
the dimension of the hidden variable, during both model training and testing;
thus determining the smallest choice for $d_r$, compatible with good
approximation, is important.
Recent research has addressed this challenge in the discrete-time setting
by applying manifold learning to a delay-embedding space, then using the learnt
manifold to inform initialization and dimensionality of LSTM hidden states \citep{kemethInitializingLSTMInternal2021}.

Note that our early attempts at achieving these numerical results,
using the optimization ideas in \Cref{sssec:H,sssec:W},  yielded unstable models
that exhibited blow-up on shorter time scales (e.g. $T<1000$);
however, by incorporating data assimilation
as in \cite{chenAutodifferentiableEnsembleKalman2021},
and further tuning the optimization to achieve lower training errors, we were able to obtain a model that, empirically, did not exhibit blow-up, even when solved for very long time (e.g. $T=10^5$).
We also note that we were unable to achieve such high-fidelity results using the methods of \citep{ouala_learning_2020} on neural networks with non-linear activation functions; this may be explained by noting that
\citet{ouala_learning_2020} achieved their results using linear,
identity-based activations, resulting in inference of polynomial models containing
the true L63 model.

\begin{figure}
\centering

\begin{subfigure}{0.8\textwidth}
  \includegraphics[width=\textwidth]{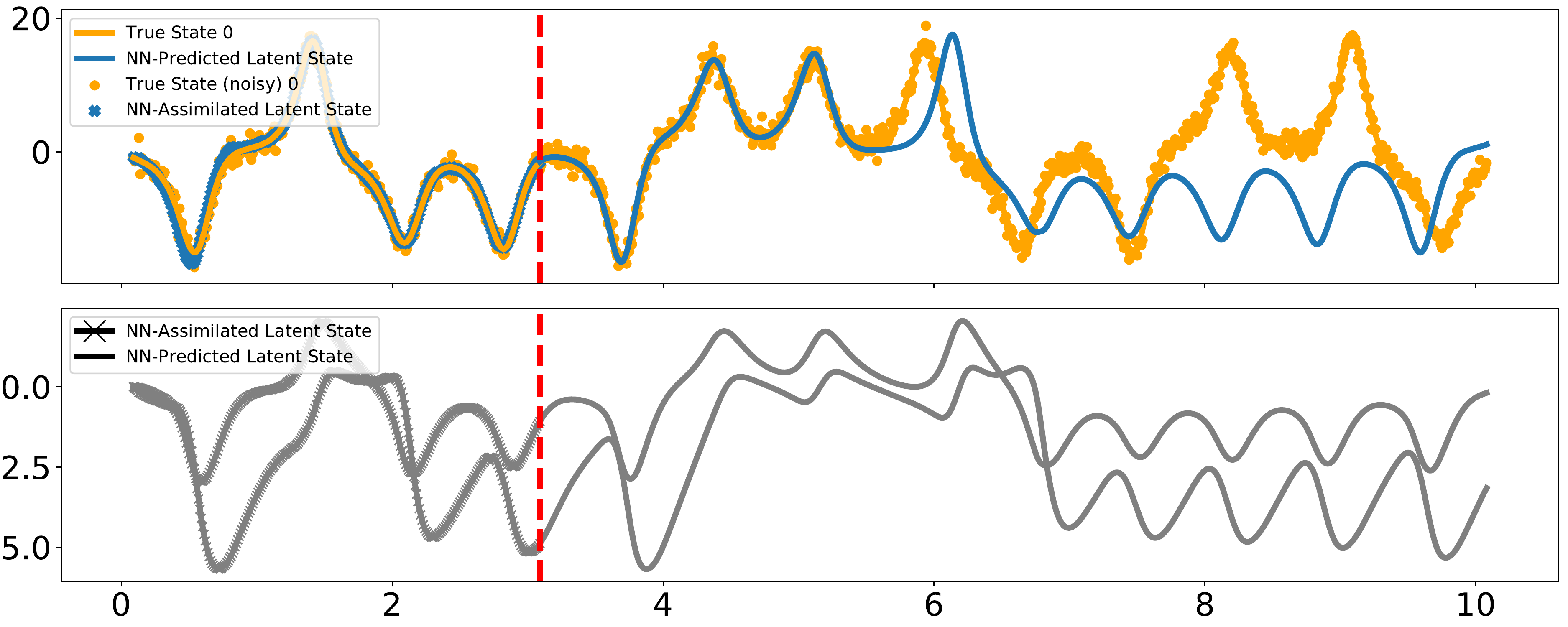}
  \caption{}
  \label{fig:A}
\end{subfigure}\\

\begin{subfigure}{0.31\textwidth}
  \includegraphics[width=\textwidth]{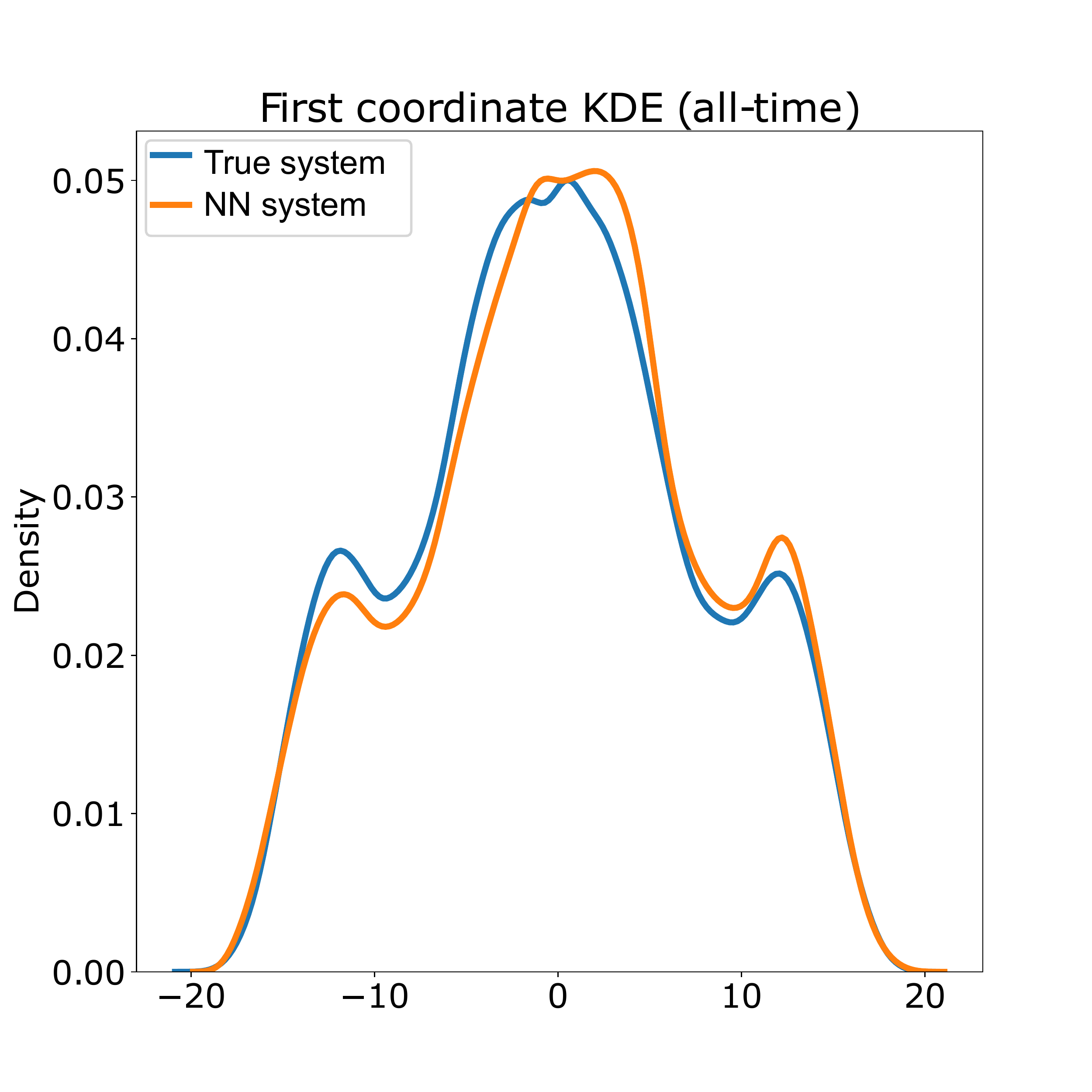}
  \caption{}
  \label{fig:B}
\end{subfigure} \hfill 
\begin{subfigure}{0.31\textwidth}
  \includegraphics[width=\textwidth]{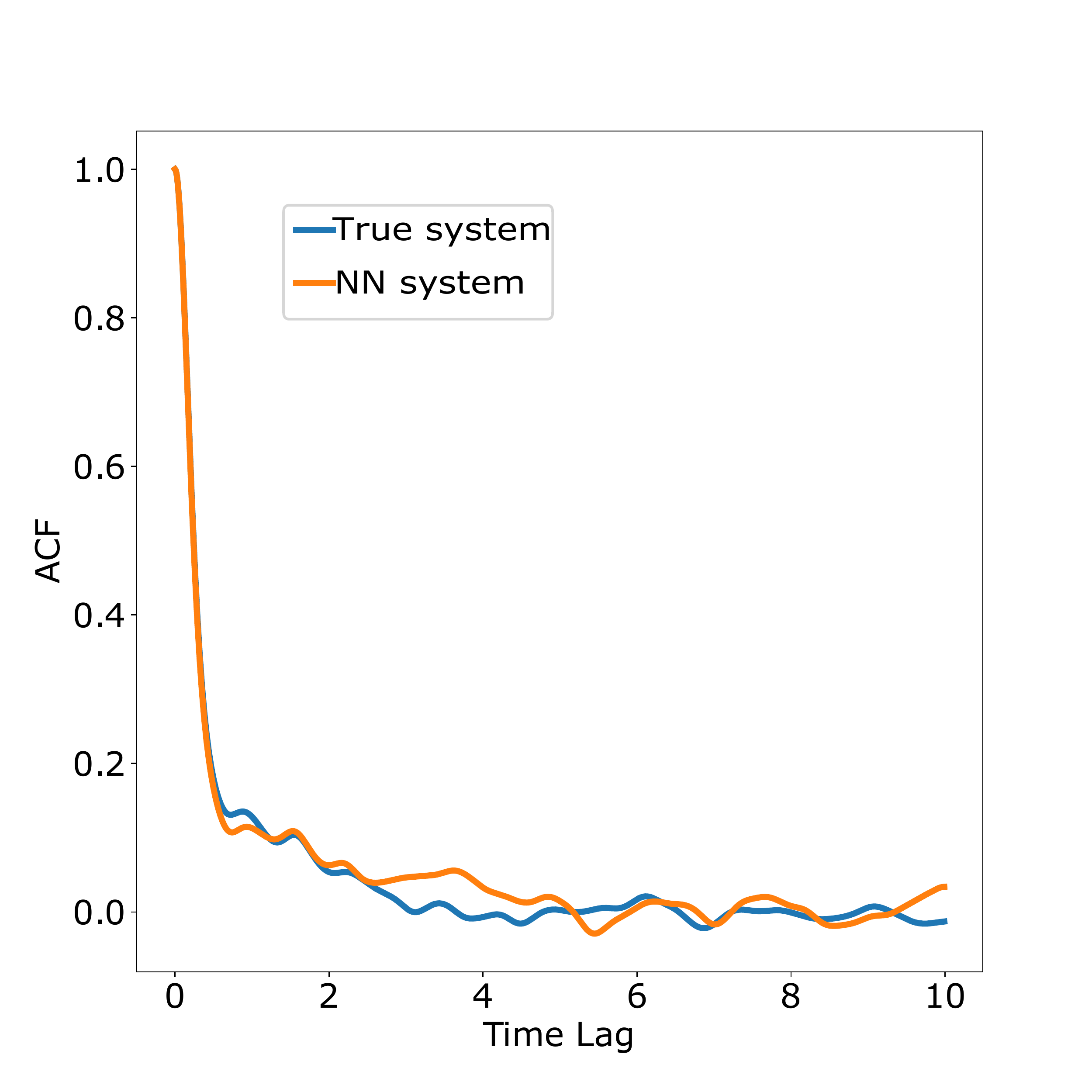}
  \caption{}
  \label{fig:C}
\end{subfigure} \hfill
\begin{subfigure}{0.31\textwidth}
  \includegraphics[width=\textwidth]{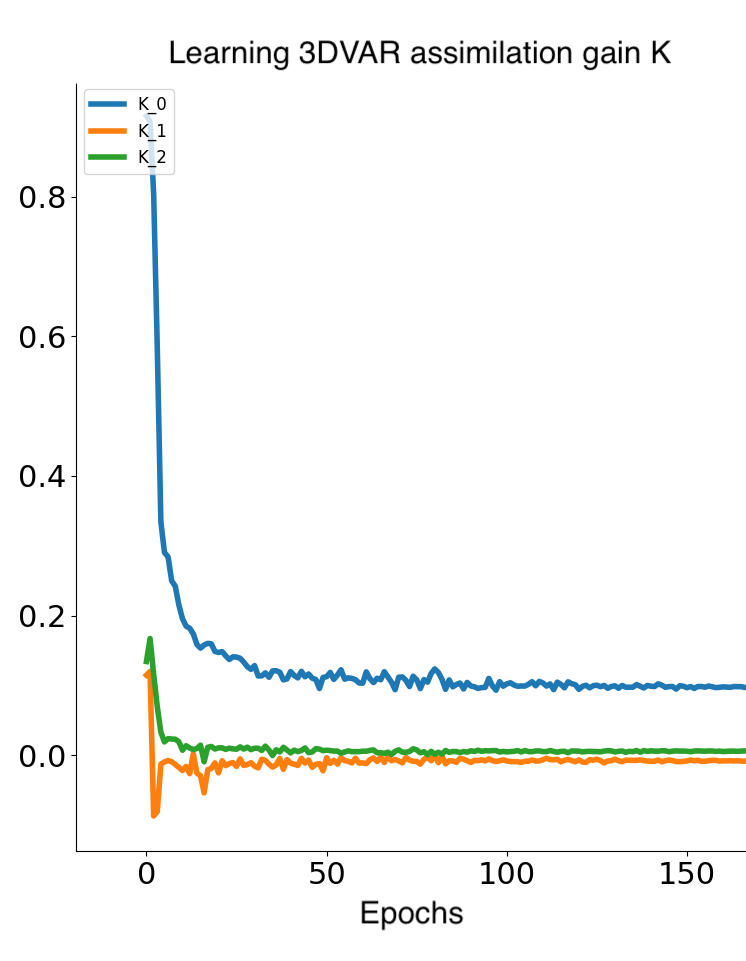}
  \caption{}
  \label{fig:D}
\end{subfigure}

\caption{
This figure concerns learning of a continuous-time RNN to model the L63 system \eqref{eq:l63}, based on noisy observation of only the first component; it uses an augmented state space $d_r = 2$.
\Cref{fig:A} shows how the trained model can be used for forecasting---by first synchronizing to data using 3DVAR, then forecasting into the future.
The top-half depicts dynamics of the observed component (model-solutions in blue; observations in yellow);
the bottom-half depicts the augmented state space (both hidden components are shown in gray).
We observed a validity time of roughly 3 model time units.
\Cref{fig:B,fig:C} shows that long-time solutions of the learnt model accurately mirror invariant statistics (invariant measure and autocorrelation, resp.) for the true system.
\Cref{fig:D} shows the learning process for estimating a 3DVAR gain $K$.}
\label{fig:l63dy2}
\end{figure}

\begin{figure}[!htbp]
\centering
  \includegraphics[width=0.8\textwidth]{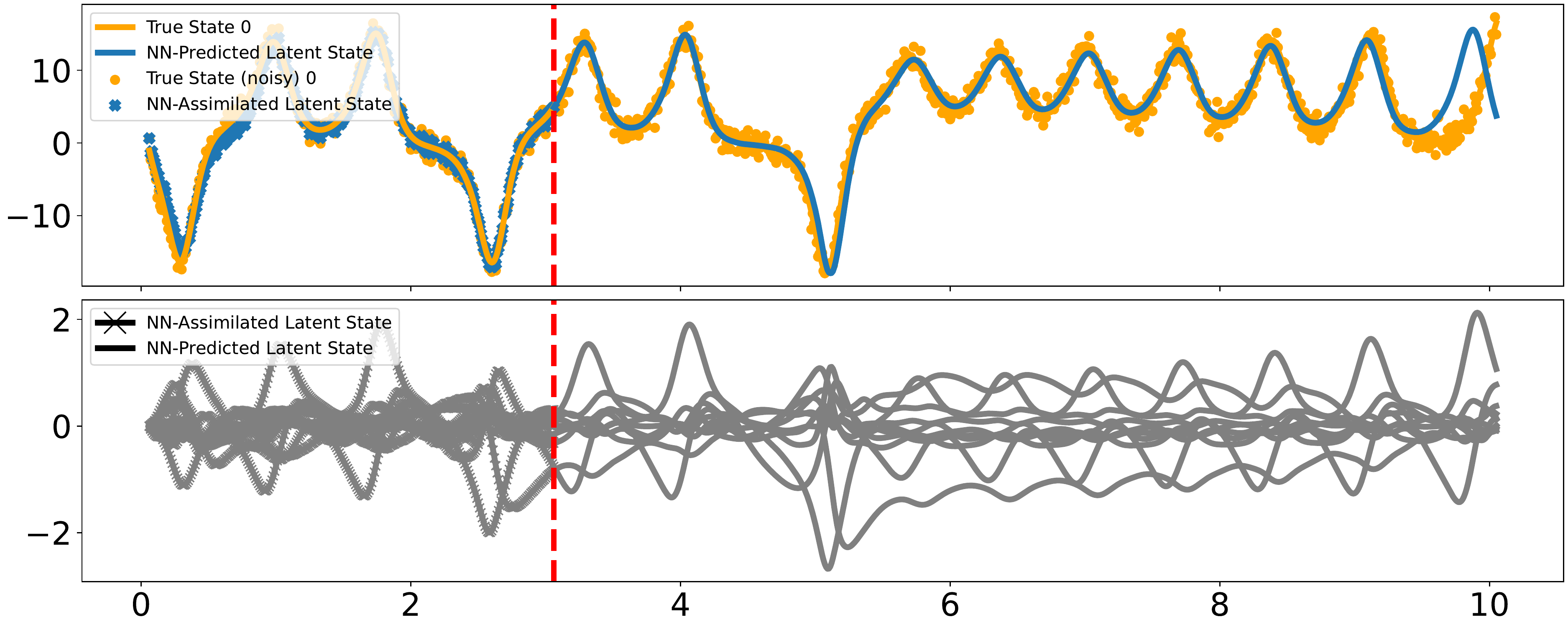}
  \caption{
  This figure concerns learning of a continuous-time RNN to model the L63 system \eqref{eq:l63}, based on noisy observation of only the first component; it uses an augmented state space $d_r = 10$.
  The top-half depicts dynamics of the observed component (model-solutions in blue; observations in yellow);
  the bottom-half depicts the augmented state space (all 10 hidden components are shown in gray).
  In the first 3 time units, the model is assimilated to a sequence of observed data using 3DVAR, then in the subsequent 7 time units, a forecast is issued. We found this model to have similar short-term and long-term fidelity when compared to the model presented in \Cref{fig:A,fig:B,fig:C,fig:D}, which used the correct hidden dimension $d_r=2$.}
\label{fig:l63dy10}
\end{figure}

\subsubsection{Lorenz '96 Multiscale ($\scaleeps=2^{-1}$)}
Recall that Markovian closures fail to capture autocorrelation statistics for the slow components of this model in the case of $\scaleeps=2^{-1}$ (see top right panel of \Cref{fig:l96acf}).
As evidenced by the slow-moving trajectory around the
Markovian closure in \Cref{fig:l96residuals},
this is a case ripe for non-Markovian modeling.
We investigate the applicability of our continuous-time ODE formulation in \eqref{eq:ml_cont_mem}, using a neural network of width of $1000$.
We applied the above described  methodology for minimizing \eqref{eq:JcmemMatt},
under the data setting described in \Cref{ssec:l96}, to learn hidden dynamics.
Similarly to the previous section, we jointly trained on 100 trajectories, each of length $T=20$ and randomly initialized from a box around the attractor.
We chose an assimilation time of $\tau_1=2$ and a forecast time of $\tau_2=1$; note that longer times can become quite costly, especially for high-dimensional systems; nevertheless, the assimilation time $\tau_1$ appears intrinsically tied to the amount of memory present in the system.

In \Cref{fig:B96,fig:C96}, we plot comparisons of the true and learnt (via \eqref{eq:ml_cont_mem}) ACF and invariant measure,
and observe substantial improvement over the Markovian closure.
However, this learnt model exhibited instabilities when solved for longer than $T=500$.
We expect that this can be remedied via further training (as was found for the L63 example);
however, the incorporation of stability constraints into the model,
as in \cite{schneider2021learning}, would be valuable.
In order to train this larger model for longer time, further studies of efficient optimization must also be performed in this setting (\citep{chenAutodifferentiableEnsembleKalman2021} has begun highly relevant investigation in this direction).

In \Cref{fig:K96}, we visualize the learnt 3DVAR gain (which encodes the learnt model's covariance structure), in which each row corresponds to the gain for a given component of the learnt model as a function of observed components (indexed in the columns); trends are elucidated via hierarchical clustering and a row-based normalization of the learnt matrix $K$.
It clearly learns a consistent diagonal covariance structure for the observables.
More impressively, it illustrates cross-covariances between observed and hidden components that mirror the compartmentalized structure of the model in \eqref{eq:l96ms};
note that each observed component has a distinct grouping of hidden variables which have high correlation (white) primarily with that component and low correlation (black) with other observables.
This type of analysis may provide greater interpretability of learnt models of hidden dynamics.

\begin{figure}
\centering


\begin{subfigure}{0.45\textwidth}
  \includegraphics[width=\textwidth]{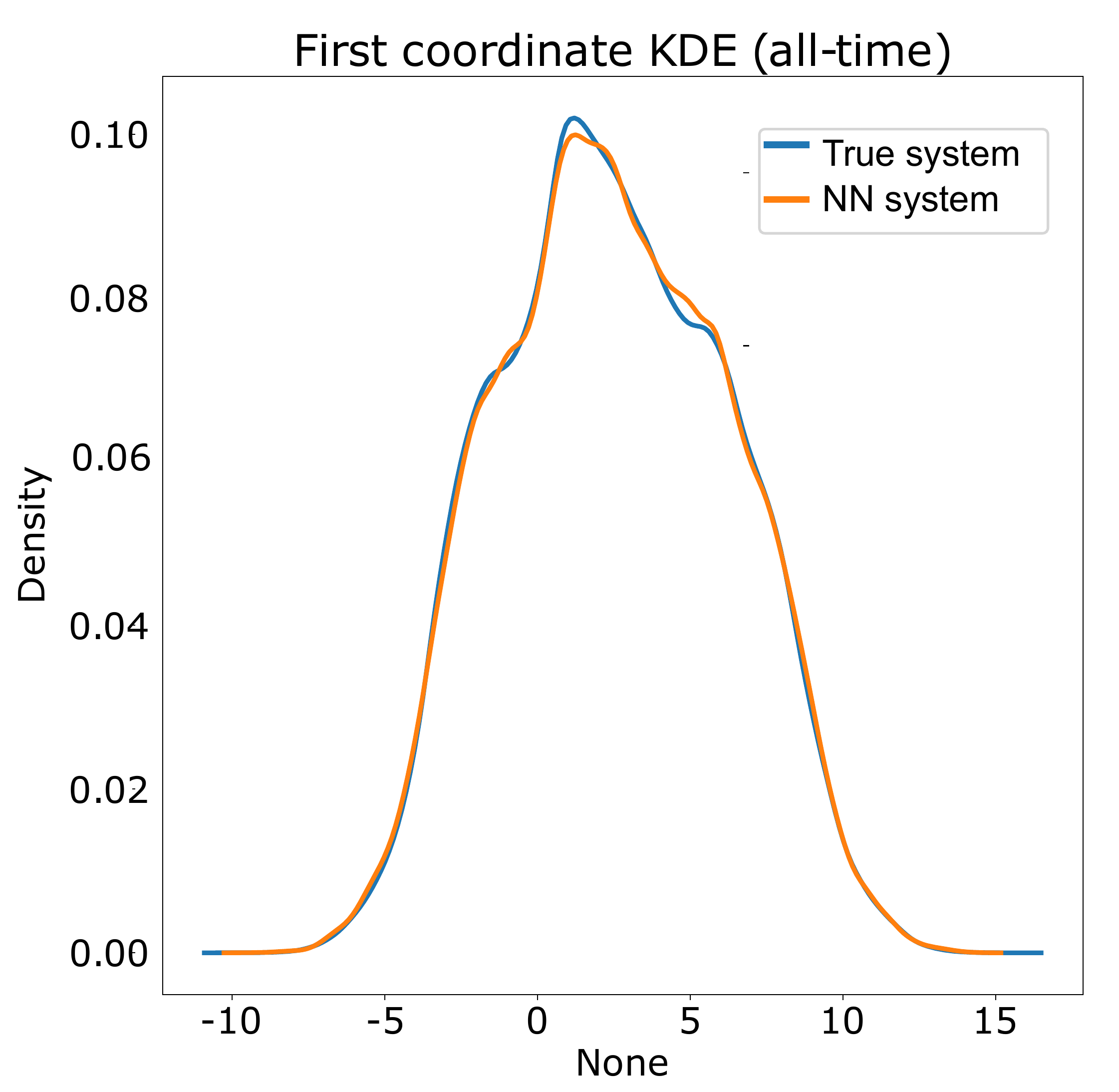}
  \caption{}
  \label{fig:B96}
\end{subfigure} \hfill 
\begin{subfigure}{0.45\textwidth}
  \includegraphics[width=\textwidth]{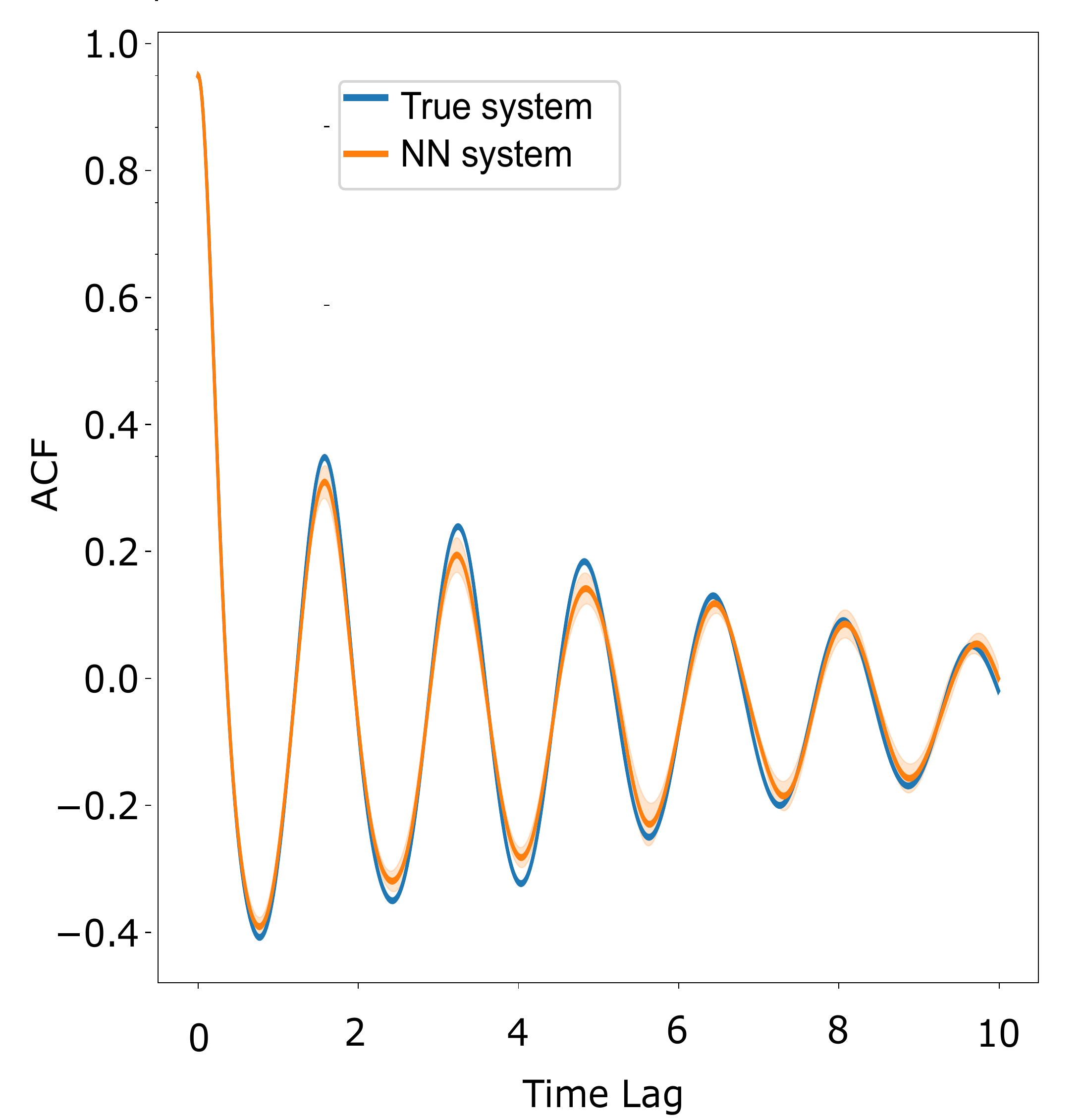}
  \caption{}
  \label{fig:C96}
\end{subfigure}

\caption{
This figure concerns learning of a continuous-time RNN to model the first 9 (slow) components of the L96MS system ($\scaleeps=0.5$ in \eqref{eq:l96ms}), based on noisy observations of these slow components; it uses an augmented state space $d_r = 72$.
We trained using noised observations (standard deviation $0.01$) of only the first 9 components of the true $81-$dimensional system.
These plots show that this model can accurately reproduce both the invariant measure (\Cref{fig:B96}) and ACF (\Cref{fig:C96}) for these observed states.
These statistics were calculated by running the learnt model for $T=500$ model time units;
longer runs encountered instabilities that caused trajectories to leave the attractor and blow-up.}
\label{fig:l96dy72}
\end{figure}

\begin{figure}
\centering

  \includegraphics[width=0.8\textwidth]{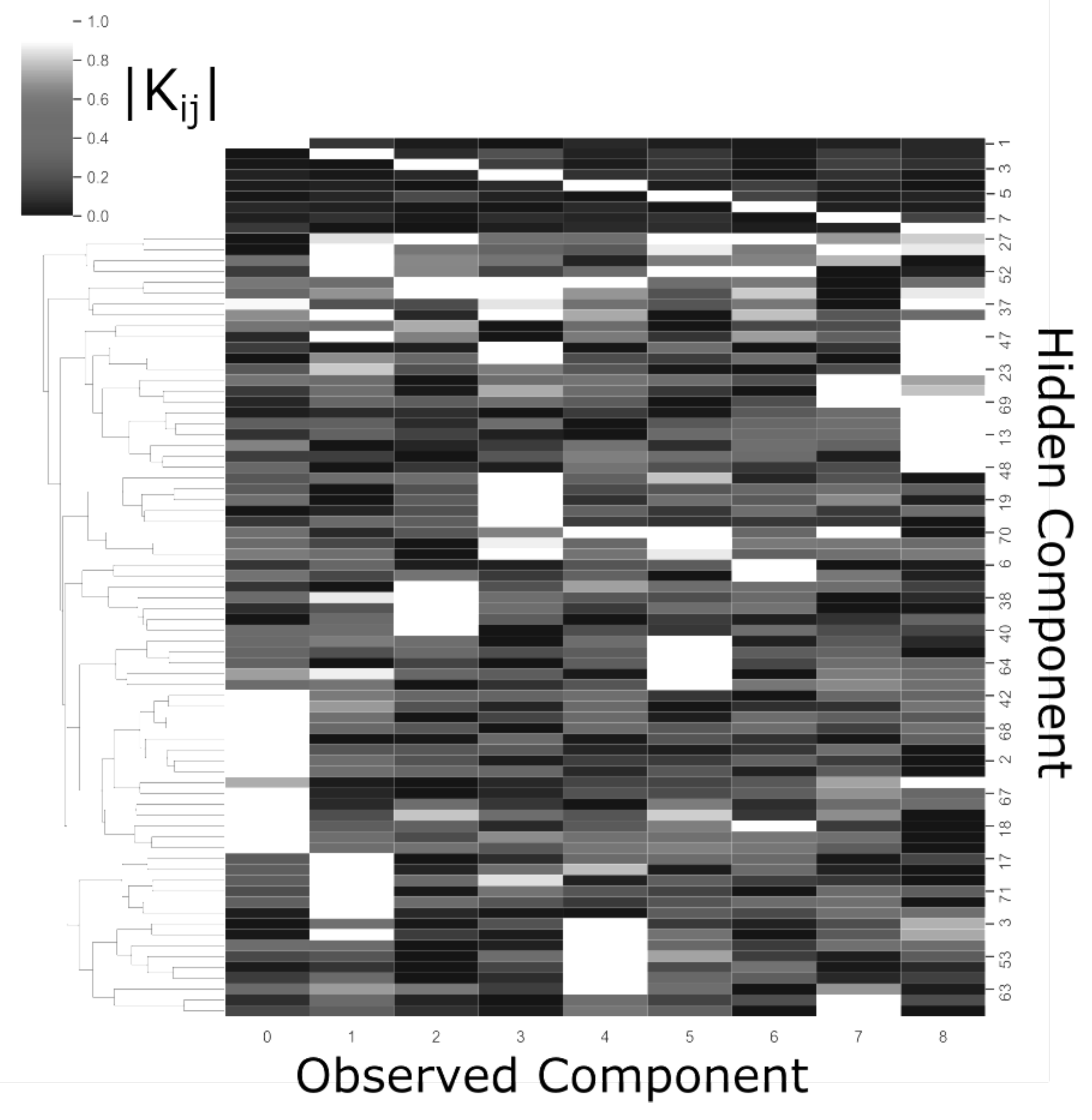}

\caption{Here we visualize the learnt 3DVAR gain matrix $K$ (81 x 9) ($\theta_\textrm{DA}$ in \Cref{ssec:mlda}) associated with the non-Markovian learning of L96MS \eqref{eq:l96ms}.
We first compute entry-wise absolute values, then apply a row-normalization;
white indicates highest correlation, and black indicates lowest correlation.
The top 9 rows shown directly correspond to the first 9 rows of $K$.
The bottom 72 rows are re-ordered (via hierarchical clustering) to illustrate associations between the 9 observed components and the 72 hidden variables.}
\label{fig:K96}
\end{figure}

\subsection{Initializing and Stabilizing the RNN}
\label{ssec:irnn}

As mentioned in Remark \ref{rem:da}
the RNN approximates an enlarged
system which contains solutions of the original system as
trajectories confined to the invariant manifold $m=m^\dag(x,y)$;
see identity \eqref{eq:identity}.
However, this invariant manifold may be unstable, either
as a manifold within the continuous-time model \eqref{eq:fgmdag2},
or as a result of numerical instability.
We now demonstrate this with numerical experiments.
This instability points to the need
for data assimilation to be used with RNNs if prediction of
the original system is desired, not only to initialize the system
but also to stabilize the dynamics to remain near to
the desired invariant manifold.

To illustrate these challenges, we consider the problem of modeling evolution of
a single component of the L63 system \eqref{eq:l63}. Consider
this as variable $x$ in \eqref{eq:fgdag}.
As exhibited in \eqref{eq:fgmdag2}, model error may be
addressed in this setting by learning
a representation that contains the hidden states $y$
in \eqref{eq:fgdag} (i.e. the other two unobserved components of \eqref{eq:l63}), but since the dimension of the hidden states is typically not
known {\em a priori} the dimensions of the latent variables in the RNN
(and the system it approximates)
may be greater than those of $y$; in the specific construction we use
to prove the existence of an approximating RNN we introduce a
vector field for evolution of the error $m$ as well as $y$. We now
discuss the implications of embedding the true dynamics in a higher dimensional
system in the specific context of the embedded
system \eqref{eq:fgmdag2}. However the observations apply to any
embedding of the desired dynamics \eqref{eq:fgdag}
(with $\epsilon=1$) within any higher dimensional system.

We choose examples for which \eqref{eq:identity} implies that $m-m^\dag$ is
constant in time. Then, under \eqref{eq:fgmdag2},
$$\bigl(m-m^\dag(x,y)\bigr)(t)={\rm constant};$$
that is, it is constant in time. The
desired invariant manifold (where the constant is $0$) is thus stable.
However this stability only holds in a neutral sense:
linearization about the manifold exhibits a zero eigenvalue
related to translation of $m-m^\dag$ by a constant.
We now illustrate that this embedded invariant manifold can be
unstable; in this case the instability is caused by numerical
integration, which breaks the conservation of $m-m^\dag$
in time.

\emph{Example 1:}
Consider equation \eqref{eq:l63} which we write in form \eqref{eq:fgmdag}
by setting $x=u_x$ and $y=(u_y, u_z).$
Then we let $f_0(u_x) := -a u_x$
yielding $m^\dag(u_y) = a u_y$. Thus $f^\dag=f_0+m^\dag$ is defined by
the first component of the right-hand side of \eqref{eq:l63}. The function
$g^\dag(u_y, u_z)$ is then given by the second and third components of
the right-hand side of \eqref{eq:l63}.
Applying the methodology leading to \eqref{eq:fgmdag2} to \eqref{eq:l63} results in the
following four dimensional system:
\begin{subequations}
  \label{eq:l63_4da}
\begin{align}
\dot{u}_x & = f_0(u_x) + m,  &u_x(0) &= x_0, \\
\dot{u}_y &= bu_x - u_y - u_xu_z,  &u_y(0) &= y_0, \\
\dot{u}_z &= -cu_z + u_xu_y,  &u_z(0) &= z_0, \\
\dot{m} &= a \bigl(bu_x - u_y - u_xu_z\bigr),  &m(0)&= m^\dag(y_0).
\end{align}
\end{subequations}

Here we have omitted the $u_y$-dependence from the equation \eqref{eq:l63} for $u_x$,
and aim to learn this error term;
we introduce the variable $m$ in order to do so.
This system, when projected into
$u_x, u_y, u_z$, behaves identically to \eqref{eq:l63}
when $m(0) = m^\dag(y_0)$.
Thus the $4-$dimensional system in \eqref{eq:l63_4da} has an embedded invariant manifold
on which the dynamics is coincident with that of the $3-$dimensional
L63 system.

We numerically integrate the $4-$dimensional system in \eqref{eq:l63_4da} for 10000 model time units (initialized at $x_0 = 1, y_0=3, z_0=1, m_0=a y_0=30$),
and show in \Cref{fig:da_l63_ay_inv} that the resulting measure for $u_x$ (dashed red) is nearly identical
to its invariant measure in the traditional $3-$dimensional L63 system in \eqref{eq:l63} (solid black).
However, we re-run the simulation for a perturbed $m(0) = m^\dag(y_0) + 1$, and see in \Cref{fig:da_l63_ay_inv} (dotted blue)
that this yields a different invariant measure for $u_x$.
This result emphasizes the importance of correctly initializing an RNN not only for efficient trajectory forecasting, but also for accurate
statistical representation of long-time behavior.

\begin{figure}[!htbp]
\centering
  \includegraphics[width=0.4\textwidth]{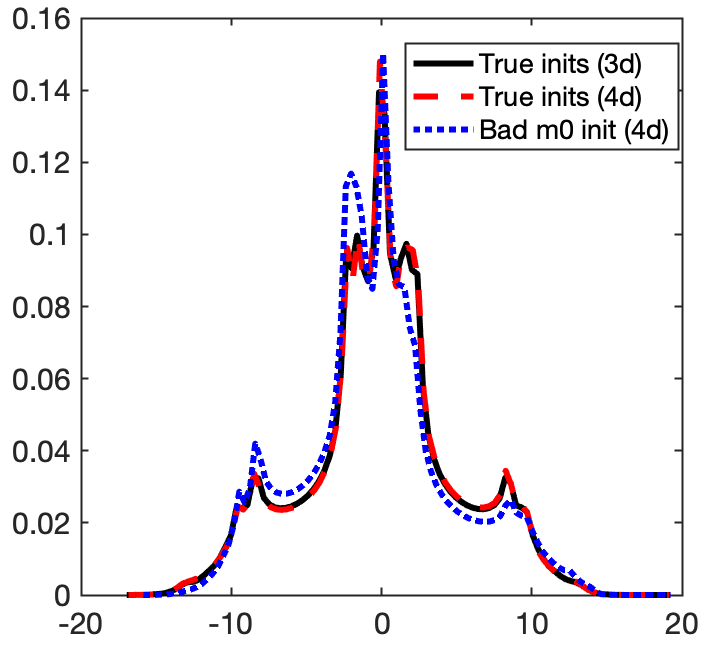}
 \caption{Here, we show that the invariant density for the first component of L63 (black)
can be reproduced by a correctly initialized augmented $4-$d system (dashed red) in \eqref{eq:l63_4da}.
However, incorrect initialization of $m(0)$ in \eqref{eq:l63_4da} (dotted blue)
yields a different invariant density.
 }
\label{fig:da_l63_ay_inv}
\end{figure}

\emph{Example 2:}
Now we consider \eqref{eq:l63} which we write in form \eqref{eq:fgmdag}
by setting $x=u_z$ and $y=(u_x, u_y).$
We let $f_0(u_z):= -c u_z$ and $m^\dag(u_x, u_y):= u_x u_y$, so that $f^\dag = f_0 + m^\dag$
corresponds to the third component of the right-hand side of \eqref{eq:l63}.
Function $g^\dag(u_x,u_y)$ is defined by the first two components
of the right-hand side of \eqref{eq:l63}.
We again form a $4-$dimensional system corresponding to \eqref{eq:l63}
using the methodology that leads to \eqref{eq:fgmdag2}:
\begin{subequations}
  \label{eq:l63_4db}
\begin{align}
\dot{u}_x & = a(u_y - u_x),  &u_x(0) &= x_0, \\
\dot{u}_y &= bu_x - u_y - u_xu_z,  &u_y(0) &= y_0, \\
\dot{u}_z &= f_0(u_z) + m,  &u_z(0) &= z_0, \\ 
\dot{m} &= u_x \dot{u}_y + u_y \dot{u}_x,  &m(0)&= m^\dag(x_0, y_0).
\end{align}
\end{subequations}
We integrate \eqref{eq:l63_4db} for 3000 model time units (initialized at $x_0 = 1, y_0=3, z_0=1, m_0=x_0 y_0 =3$),
and show in \Cref{fig:da_l63_ay2_traj_long} that the $3-$dimensional
Lorenz attractor is unstable with respect to perturbations in the
numerical integration of the $4-$dimensional system.
The solutions for $u_x, u_y, u_z$ eventually collapse to a fixed point after the growing discrepancy between $m(t)$ and $m^\dag$
becomes too large. The time at which collapse occurs may be
delayed by using smaller tolerances within the numerical integrator
(we employ {\sc Matlab rk45}) demonstrating that the instability is
caused by the numerical integrator.
This collapse is very undesirable if prediction of long-time
statistics is a desirable goal. On the other hand,
\Cref{fig:da_l63_ay2_traj_short} shows short-term accuracy of the $4-$dimensional system in \eqref{eq:l63_4db} up to 12 model time
units when correctly initialized ($m_0 = m^\dag(x_0, y_0)$, dashed red), and accuracy up to 8 model time units when initialization of $m_0$ is perturbed ($m_0 = m^\dag(x_0, y_0) + 1$, dotted blue).
This result demonstrates the fundamental challenges of representing chaotic attractors in enlarged dimensions and may help explain
observations of RNNs yielding good short-term accuracy, but inaccurate long-term statistical behavior.
While empirical stability has been observed in some discrete-time LSTMs \citep{vlachas_backpropagation_2020,harlim_machine_2021},
the general problem illustrated above is likely to manifest in any problems
where the dimension of the learned model exceeds that of the true model;
the issue of how to address initialization of such models, and its interaction
with data assimilation, therefore merits further study.

\begin{figure}[!htbp]
\centering
  \includegraphics[width=0.7\textwidth]{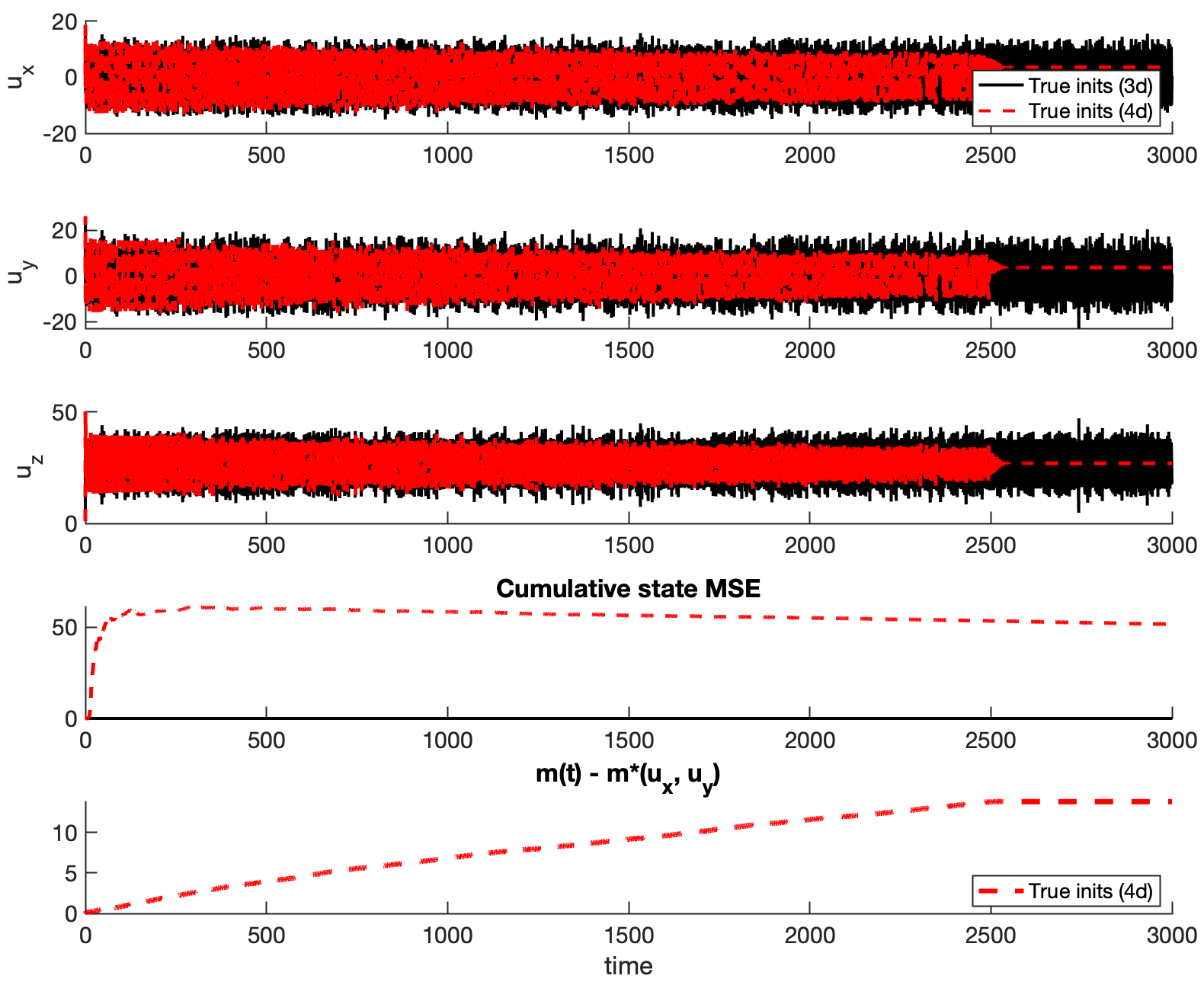}
  \caption{Here we show that the embedded $3-$dimensional manifold
of L63, within the $4-$dimensional system given by \eqref{eq:l63_4db}, is unstable. Indeed
the correctly initialized $4-$dimensional system (dashed red) has solution which decays to a fixed point. The bottom figure shows divergence of the numerically integrated model error term $m(t)$ and the state-dependent term $m^\dag$; this growing discrepancy is likely
responsible for the eventual collapse of the $4-$dimensional system.}
\label{fig:da_l63_ay2_traj_long}
\end{figure}

\begin{figure}[!htbp]
\centering
  \includegraphics[width=0.8\textwidth]{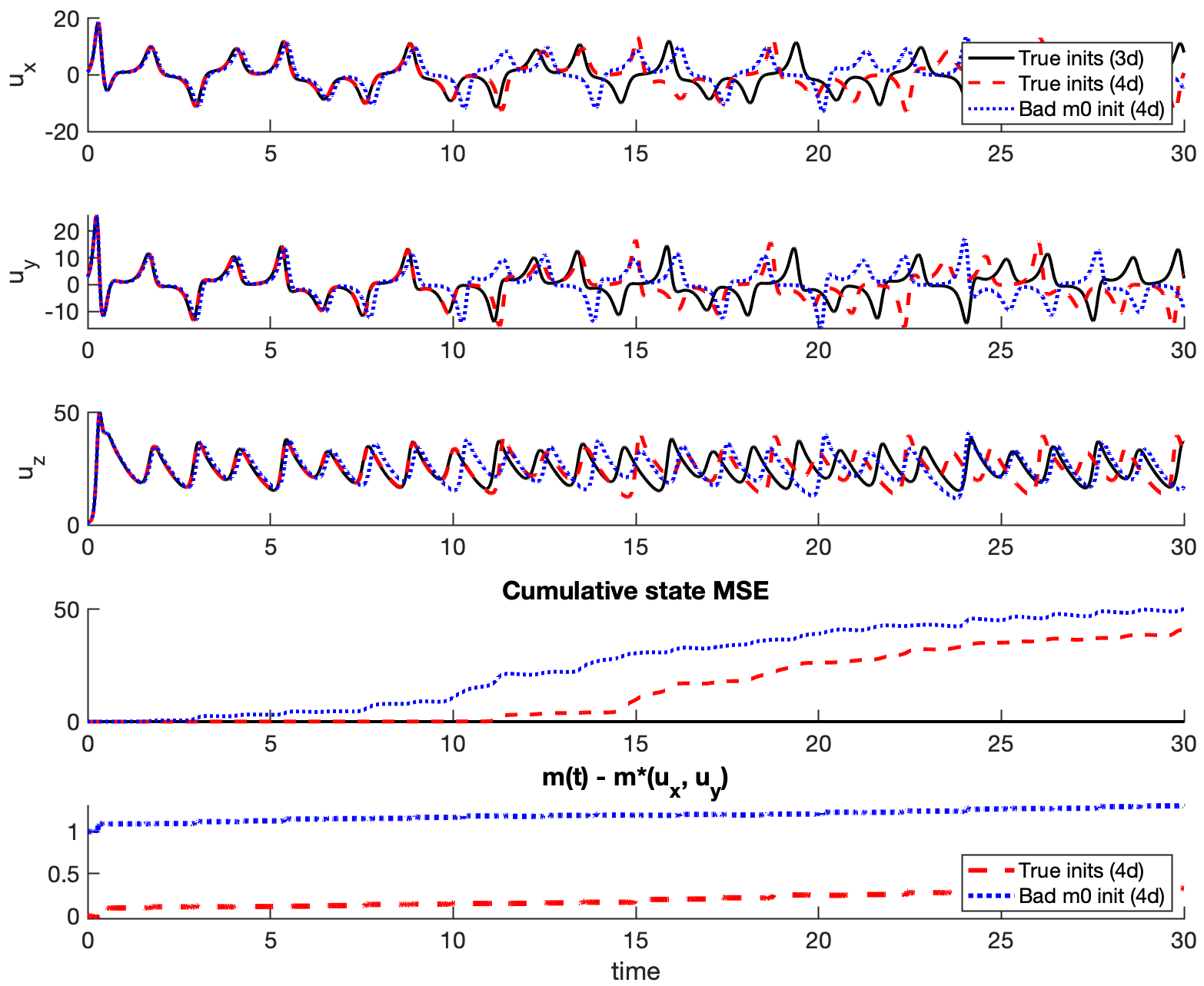}
  \caption{Here, we show short-term accuracy for the $4-$dimensional
system in \eqref{eq:l63_4db}.
Predictions using the correct initialization of $m_0$ (dashed red) remain accurate for nearly twice as long
as predictions that use a perturbed initialization ($m_0 = m^\dag(u_x, u_y) + 1$).
The bottom figure shows that $m(t)$ diverges from the state-dependent
$m^\dag$ more quickly for the poorly initialized model,
but in both cases errors accumulate over time.}
\label{fig:da_l63_ay2_traj_short}
\end{figure}

\section{Conclusions}
\label{sec:conclusions}

In this work we evaluate the utility of blending mechanistic models
of dynamical systems with data-driven methods,
demonstrating the power of hybrid approaches.
We provide a mathematical framework that
is consistent across parametric and non-parametric models,
encompasses both continuous- and discrete-time, and allows for
Markovian and memory-dependent model error. We also provide
basic theoretical results that underpin the adopted approaches.
The unified framework elucidates commonalities between seemingly
disparate approaches across various applied and theoretical disciplines.
It would be desirable if the growing recognition of the need
for hybrid modeling were to motivate flexible incorporation of
mechanistic models into open-source software for continuous-time
Markovian and non-Markovian modeling of error  \citep{ouala_learning_2020,rubanova_latent_2019,chang_antisymmetricrnn_2019,erichson_lipschitz_2020,anantharaman_accelerating_2020,gupta_neural_2021}.

Our work is focused on immutable mechanistic models
($f_0$ and $\Psi_0$),
but these models themselves often have tunable parameters.
In principle one can jointly learn parameters for the mechanistic model and closure term.
  However, the lack of identifiability between modifying the closure and modifying the physics
  brings up an interesting question in explainability.
  Future work might focus on decoupling the learning of parameters and closure terms
  so that maximal expressivity is first squeezed out of the mechanistic model \citep{plumlee_orthogonal_2017,plumlee_bayesian_2017}.

Our numerical results demonstrate
the superiority of hybrid modeling over learning an entire system from scratch, even when the available mechanistic model has large infidelities.
Hybrid modeling also showed surprisingly large performance gains over using mechanistic models with only small infidelities.
We quantify these improvements in terms of data hunger, demands for model complexity, and overall predictive performance, and find that all three are significantly improved by hybrid methods in our experiments.

We establish bounds on the excess risk and generalization error that decay as $1 / \sqrt{T}$
when learning model discrepancy from a trajectory of length $T$ in an ergodic continuous-time Markovian setting.
We make minimal assumptions about the nominal physics (i.e. $f_0 \in C^1$); thus, our result equivalently holds for learning the entire vector field $f^\dag$ (i.e $f_0 \equiv 0$).
However the upper bounds on excess risk and generalization error
scale with the size of the function being learned, hence
going some way towards explaining the superiority of hybrid modeling
observed in the numerical experiments.
Future theoretical work aimed at quantifying the
benefits of hybrid learning versus purely
data-driven learning is of interest.
We also note that the ergodic assumption underlying our theory
will not be satisfied by many dynamical models, and alternate
statistical learning theories need to be developed in such settings.

We illustrate trade-offs between discrete-time and continuous-time modeling approaches by studying their performance as a function of training data sample rate.
We find that hybrid discrete-time approaches can alleviate instabilities seen in purely data-driven discrete-time models at small timesteps; this is likely due to structure in the integrator $\Psi_0$, which has the correct parametric dependence on timestep.
In the continuous-time setting, we find that performance is best when derivatives can accurately be reconstructed from the data, and deteriorates
in tandem with differentiation inaccuracies (caused by large timesteps); continuous-time hybrid methods appear to offer additional robustness to inaccurate differentiation when compared to purely data-driven methods.
In cases of large timesteps and poorly resolved derivatives, ensemble-based data assimilation methods may still allow for accurate learning of  residuals to the flow field for continuous-time modeling \citep{gottwald_supervised_2021}.

Finally, we study non-Markovian memory-dependent model error, through
numerical experiments and theory, using RNNs.
We prove universal approximation for continuous-time hybrid RNNs and
demonstrate successful deployment of the methodology.
Future work focusing on the effective training of these models, for more
complex problems, would be of great value; ideas from data assimilation
are likely to play a central role \cite{chenAutodifferentiableEnsembleKalman2021}.
Further work on theoretical properties of reservoir computing (RC) variants on RNNs
would also be of value: they
benefit from convex optimization, and may be viewed
as random feature methods between Banach spaces.
These RNN and RC methods will
benefit from constraining the learning to ensure stability of the latent
dynamical model. These issues are illustrated via numerical experiments
that relate RNNs to the question of stability of
invariant manifolds representing embedded desired dynamics within
a higher dimensional system.

\section{Appendix}
\label{sec:appendix}

\subsection{Proof of Excess Risk/Generalization Error Theorem}
\label{sec:riskyproof}

Note that in both \eqref{eq:CLT} and \eqref{eq:LIL} $\varphi(\cdot)$ is
only evaluated on (compact) $\cA$ obviating the need for any boundedness
assumptions on the functions $\{f_\ell\}_{\ell=0}^p$ and $m^\dag$ in what follows.

\begin{lemma}
\label{lem:1}
Let Assumptions \ref{asp:1} and \ref{asp:2} hold. Then
there is $\Sigma$ positive semi-definite symmetric in
$\mathbb{R}^{p \times p}$
such that $\tst \to \ts$ almost surely, and $\sqrt{T}(\tst-\ts)
\Rightarrow N(0,\Sigma)$ with respect to $x(0) \sim \mu.$
Furthermore, there is constant
$C \in (0,\infty)$ such that, almost surely w.r.t. $x(0) \sim \mu$,
$${\rm limsup}_{T \to \infty}\Bigl(\frac{T}{\log\log T}\Bigr)^{\frac12}
\|\tst-\ts\| \le C.$$
\end{lemma}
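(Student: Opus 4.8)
The plan is to reduce the statement to the ergodic-average behaviour of finitely many H\"older-continuous observables, to which Assumption~\ref{asp:1} applies directly. First I would observe that, because the $\{f_\ell\}$ are H\"older continuous (Assumption~\ref{asp:2}) and $m^\dag$ is $C^1$ (hence H\"older on the compact attractor $\cA$), each entry of $\AT$ and $\bT$ is a time average of the form $\frac{1}{T}\int_0^T \varphi(x(t))\,dt$ with $\varphi$ H\"older, whose corresponding entry of $\Ai$, $\bi$ is $\int \varphi\,d\mu$. Ergodicity therefore gives $\AT \to \Ai$ and $\bT \to \bi$ almost surely with respect to $x(0)\sim\mu$. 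Since $\Ai$ is invertible, $\AT$ is invertible for all sufficiently large $T$ (a.s.) and $\AT^{-1}\to\Ai^{-1}$ by continuity of matrix inversion; combined with the linear systems $\AT\tst=\bT$ and $\Ai\ts=\bi$ this yields $\tst \to \ts$ almost surely.

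Next I would isolate the fluctuation. Subtracting the two linear systems and rearranging gives the identity
\begin{equation*}
\tst - \ts = \AT^{-1}\Bigl[(\bT-\bi) - (\AT-\Ai)\ts\Bigr],
\end{equation*}
and hence
\begin{equation*}
\sqrt{T}\,(\tst-\ts) = \AT^{-1}\Bigl[\sqrt{T}(\bT-\bi) - \sqrt{T}(\AT-\Ai)\ts\Bigr].
\end{equation*}
Everything now hinges on the joint asymptotics of the centred, $\sqrt{T}$-scaled entries of $(\AT,\bT)$, since $\ts$ is a deterministic constant and $\AT^{-1}$ converges almost surely.

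The main obstacle is upgrading the scalar central limit theorem of Assumption~\ref{asp:1} to a joint (multivariate) statement for the whole collection of entries. I would handle this with the Cram\'er--Wold device: any fixed linear combination of the entries of $\sqrt{T}(\AT-\Ai)$ and $\sqrt{T}(\bT-\bi)$ equals $\sqrt{T}$ times the centred time average of a single observable that is itself a linear combination of the H\"older functions $\langle f_i,f_j\rangle$ and $\langle m^\dag,f_j\rangle$, hence H\"older. The scalar CLT \eqref{eq:CLT} applies to this observable, so every one-dimensional projection of the vector of centred entries converges to a Gaussian, and Cram\'er--Wold then gives joint convergence to a centred multivariate Gaussian. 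The bracketed quantity above is a fixed linear image of this vector, so it converges in distribution to some $N(0,\Sigma_0)$. Because $\AT^{-1}\to\Ai^{-1}$ almost surely, Slutsky's theorem yields $\sqrt{T}(\tst-\ts)\Rightarrow N(0,\Sigma)$ with $\Sigma=\Ai^{-1}\Sigma_0\Ai^{-1}$ (using that $\Ai$, and hence $\Ai^{-1}$, is symmetric); this $\Sigma$ is symmetric positive semi-definite as the covariance of a Gaussian.

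Finally, for the law of the iterated logarithm I would apply \eqref{eq:LIL} to each of the finitely many H\"older observables defining the entries of $\AT$ and $\bT$ (using $\pm\varphi$ to control both signs), obtaining that each entry of $\AT-\Ai$ and $\bT-\bi$ is $O\bigl((\log\log T/T)^{1/2}\bigr)$ almost surely. Taking the maximum over this finite set and inserting these bounds into the identity $\tst-\ts=\AT^{-1}\bigl[(\bT-\bi)-(\AT-\Ai)\ts\bigr]$, together with the a.s. boundedness of $\|\AT^{-1}\|$ for large $T$, gives $\|\tst-\ts\|\le C(\log\log T/T)^{1/2}$ almost surely, which is the claimed $\limsup$ bound. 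The only care needed here is that a finite collection of almost-sure statements may be intersected, which is immediate.
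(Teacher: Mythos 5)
Your proposal is correct and follows essentially the same route as the paper's proof: the same identity $\tst-\ts=\AT^{-1}\bigl[(\bT-\bi)-(\AT-\Ai)\ts\bigr]$, the pointwise ergodic theorem applied to the H\"older entries of $\AT,\bT$ for almost-sure convergence, the Cram\'er--Wold device plus Slutsky for the joint CLT, and entrywise application of \eqref{eq:LIL} for the iterated-logarithm bound. The only (harmless) addition is your explicit formula $\Sigma=\Ai^{-1}\Sigma_0\Ai^{-1}$, where the paper merely notes that $\Sigma$ cannot be written in terms of the marginal variances alone because of correlations between $\AT$ and $\bT$.
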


\begin{proof}
By rearranging the equation for $\ts$ we see that
\begin{align*}
\AT \tst & = \bT,\\
\AT \ts &= \bi +(\AT-\Ai)\ts.
\end{align*}
Thus, subtracting,
\begin{equation}
\label{eq:errorth}
(\tst-\ts)=\AT^{-1}(\bT-\bi) - \AT^{-1}(\AT-\Ai)\ts.
\end{equation}

Because $\{f_\ell(\cdot)\}$ and $m^{\dag}(\cdot)$ are H\"older
(Assumption \ref{asp:2}, and discussion immediately preceding it), so are
$\bigl\langle f_i(\cdot), f_j(\cdot) \bigr\rangle$ and
$\bigl\langle m^{\dag}(\cdot), f_j(\cdot) \bigr\rangle.$
Thus each entry of matrix $\AT$ (resp. vector $\bT$) converges
almost surely to its corresponding entry in $\Ai$ (resp. $\bi$),
by the ergodicity implied by Assumption \ref{asp:1}, and the
pointwise ergodic theorem.
The almost sure convergence of $\tst$ to $\ts$ follows, after noting
that $\Ai$ is invertible.
Furthermore, also by Assumption \ref{asp:1},
there are constants $\{\sigma_{ij}\}, \{\sigma_{j}\}$ such that
\begin{align*}
&\sqrt{T}\Bigl((\AT)_{ij}-(\Ai)_{ij}\Bigr) \Rightarrow N(0,\sigma_{ij}^2),\\
&\sqrt{T}\Bigl((\bT)_{j}-(\bi)_{j}\Bigr) \Rightarrow N(0,\sigma_{j}^2).
\end{align*}
Since arbitrary linear combinations of
the $\{(\AT)_{ij}\}, \{(\bT)_{j}\}$ are time-averages of
H\"older functions, it follows
that $\sqrt{T}\{\AT-\Ai,\bT-\bi\}$ converges in distribution to a Gaussian,
by the Cram\'er-Wold Theorem \citep{grimmett_probability_2020}.
Weak convergence of $\sqrt{T}(\tst-\ts)$ to a Gaussian
follows from \eqref{eq:errorth} by use of
the Slutsky Lemma \citep{grimmett_probability_2020},
since $A_T$ converges almost surely to invertible $A_{\infty}.$ Matrix $\Sigma$
cannot be identified explicitly in terms of only the $\{\sigma_{ij}\}, \{\sigma_{i}\}$
because of correlations between $A_T$ and $b_T$.
The almost sure bound on $\|\tst-\ts\|$ follows from \eqref{eq:errorth}
after multiplying by $(T/\log\log T)^{\frac12}$, noting
that $\AT \to \Ai$ almost surely, and the almost sure bounds
on $(T/\log\log T)^{\frac12}\{\|\AT-\Ai\|,\|\bT-\bi\|\}$,
using Assumption \ref{asp:1}.
\end{proof}

In what follows it is helpful to define
\begin{align*}
R_T^+&=(\tst-\ts)\bigl(\|\tst\|+\|\ts\|+1\bigr),\\
G_T^+&=\mathcal{I}_T(\ms_\infty)-\mathcal{I}_\infty(\ms_\infty).
\end{align*}

\begin{lemma}
\label{lem:2}
Let Assumption \ref{asp:2} hold. Then, assuming $x(0) \sim \mu$,
there is constant $C>0$
such that the excess risk $R_T$ satisfies
$$R_T \le C\|R_T^+\|.$$
Furthermore the generalization error satisfies
$$|G_T| \le 2C\|R_T^+\|+|G_T^+|.$$
\end{lemma}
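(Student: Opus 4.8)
The plan is to exploit the fact that, because $m(\cdot\,;\theta)=\sum_\ell \theta_\ell f_\ell$ is linear in $\theta$, both maps $\theta \mapsto \mathcal{I}_\infty\bigl(m(\cdot\,;\theta)\bigr)$ and $\theta \mapsto \mathcal{I}_T\bigl(m(\cdot\,;\theta)\bigr)$ are quadratic, with (up to a factor of two) Hessians $\Ai$ and $\AT$ and minimizers $\ts,\tst$ satisfying the stationarity relations $\Ai\ts=\bi$ and $\AT\tst=\bT$. The single tool I would use throughout is the exact identity, valid for any quadratic $Q$ with Hessian $2A$ and any stationary point $\theta_\star$ (so $A\theta_\star=b$), namely $Q(\theta)-Q(\theta_\star)=(\theta-\theta_\star)^\top A(\theta-\theta_\star)$; note this needs only stationarity, not invertibility of $A$.

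First I would handle the excess risk. Applying the identity to $\mathcal{I}_\infty$ at $\ts$ yields
\begin{equation*}
R_T=\mathcal{I}_\infty(\ms_T)-\mathcal{I}_\infty(\ms_\infty)=(\tst-\ts)^\top \Ai (\tst-\ts)\le \|\Ai\|\,\|\tst-\ts\|^2 .
\end{equation*}
Since $\|\tst-\ts\|\le\|\tst\|+\|\ts\|\le\|\tst\|+\|\ts\|+1$, one gets $\|\tst-\ts\|^2\le\|\tst-\ts\|\bigl(\|\tst\|+\|\ts\|+1\bigr)=\|R_T^+\|$, so $R_T\le\|\Ai\|\,\|R_T^+\|$. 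This motivates taking $C$ to be a common upper bound for $\|\Ai\|$ and $\sup_{T>0}\|\AT\|$. Such a deterministic $C$ exists because, with $x(0)\sim\mu$ supported on the compact attractor $\cA$, the whole trajectory lies in $\cA$, and the H\"older (hence continuous) functions $\{f_\ell\}$ are bounded there; this gives entrywise bounds on $\AT$ and $\Ai$ uniform in $T$.

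Next I would treat the generalization error by writing $G_T=G_T^+ + (G_T-G_T^+)$ and expanding
\begin{equation*}
G_T-G_T^+=\bigl[\mathcal{I}_T(\ms_T)-\mathcal{I}_T(\ms_\infty)\bigr]-\bigl[\mathcal{I}_\infty(\ms_T)-\mathcal{I}_\infty(\ms_\infty)\bigr].
\end{equation*}
The second bracket is exactly $R_T=(\tst-\ts)^\top\Ai(\tst-\ts)$, while the identity applied to $\mathcal{I}_T$ at its minimizer $\tst$ shows the first bracket equals $-(\tst-\ts)^\top\AT(\tst-\ts)$. Hence $G_T-G_T^+=-(\tst-\ts)^\top(\AT+\Ai)(\tst-\ts)$, and using the same quadratic bound together with the uniform norm control,
\begin{equation*}
|G_T-G_T^+|\le\|\AT+\Ai\|\,\|\tst-\ts\|^2\le\bigl(\|\AT\|+\|\Ai\|\bigr)\|R_T^+\|\le 2C\|R_T^+\|.
\end{equation*}
The triangle inequality then delivers $|G_T|\le 2C\|R_T^+\|+|G_T^+|$.

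The quadratic identities and the triangle-inequality steps are routine. The one point needing care—and the main obstacle—is arranging a \emph{single} constant $C$ to serve both bounds: the excess-risk calculation naturally produces $\|\Ai\|$, whereas the generalization-error decomposition produces the random, $T$-dependent operator norm $\|\AT\|$. I would close this gap by the uniform-in-$T$ bound on $\|\AT\|$ coming from compactness of $\cA$ and continuity of the $\{f_\ell\}$; the almost-sure convergence $\AT\to\Ai$ supplied by Lemma~\ref{lem:1} offers an equivalent asymptotic route to the same conclusion.
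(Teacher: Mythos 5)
Your proof is correct, and its skeleton matches the paper's: your splitting $G_T = G_T^+ + (G_T-G_T^+)$ is exactly the paper's three-term decomposition $G_T=\bigl(\mathcal{I}_T(\ms_T)-\mathcal{I}_T(\ms_\infty)\bigr)+G_T^+-R_T$ regrouped, and both arguments reduce everything to $\|\tst-\ts\|$ times a bounded factor. The execution of the key estimates differs, though. The paper works in $L^2(\mu)$ (resp.\ $L^2$ of the empirical measure along the trajectory): it factors each difference of risks as $\int\langle \ms_T-\ms_\infty,\ \ms_T+\ms_\infty-2m^\dag\rangle$, applies Cauchy--Schwarz, and bounds the two factors by constant multiples of $\|\tst-\ts\|$ and of $\|\tst\|+\|\ts\|+\sup_{\cA}\|m^\dag\|$, never invoking the normal equations. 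You instead exploit stationarity ($\Ai\ts=\bi$, $\AT\tst=\bT$) to obtain the exact identities $R_T=(\tst-\ts)^\top\Ai(\tst-\ts)$ and $G_T-G_T^+=-(\tst-\ts)^\top(\AT+\Ai)(\tst-\ts)$, then relax $\|\tst-\ts\|^2\le\|R_T^+\|$. Your route buys equalities and transparent constants ($\|\Ai\|$ and a uniform-in-$T$ bound on $\|\AT\|$, which your compactness argument correctly supplies and which is indeed the one point needing care); the paper's route keeps the dependence on $m^\dag$ and $\ms_\infty$ explicit in the bounding factors, a feature it relies on in the remark after Theorem \ref{t:erge} when arguing that the error bounds scale with the size of the model error. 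One shared caveat: bounding the features along the trajectory uses confinement to the compact attractor, i.e.\ Assumption \ref{asp:1}, which the lemma's hypotheses do not list explicitly --- but the paper's own proof uses this too (via ``confined to the attractor $\mathcal{A}$''), so you are on equal footing there.
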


\begin{proof}
For the bound on the excess risk we note that
\begin{align*}
R_T &= \mathcal{L}_\mu(\ms_T,m^\dag)-\mathcal{L}_\mu(\ms_\infty,m^\dag)\\
&=\int_{\R^{d_x}} \Bigl\langle (\ms_T-\ms_\infty)(x),(\ms_T+\ms_\infty-2m^\dag)(x)\Bigr\rangle\,\mu(dx)\\
&\le \Bigl( \int_{\R^{d_x}}\bigl\|(\ms_T-\ms_\infty)(x)\bigr\|^2\,\mu(dx) \Bigr)^{\frac12} \Bigl( \int_{\R^{d_x}}\bigl\|(\ms_T+\ms_\infty-2m^\dag)(x)\bigr\|^2\,\mu(dx) \Bigr)^{\frac12}.
\end{align*}
The first follows from the boundedness of the $\{f_\ell\}_{\ell=1}^{p}$ and $m^\dag$,
since the first term in the product above is bounded by
a constant multiple of $\|\tst-\ts\|$ and the second term
by a constant multiple of $\|\tst\|+\|\ts\|+\sup_{\mathcal{A}} \|m^\dag\|.$

For the bound on the generalization error we note that
\begin{align*}
G_T&=\mathcal{I}_T(\ms_T)-\mathcal{I}_\infty(\ms_T)\\
&=\mathcal{I}_T(\ms_T)-\mathcal{I}_T(\ms_\infty)\\
&\quad\quad\quad\quad\quad +\mathcal{I}_T(\ms_\infty)-\mathcal{I}_\infty(\ms_\infty)\\
&\quad\quad\quad\quad\quad\quad\quad\quad\quad \ \ \ +\mathcal{I}_\infty(\ms_\infty)-\mathcal{I}_\infty(\ms_T)\\
&=\bigl(\mathcal{I}_T(\ms_T)-\mathcal{I}_T(\ms_\infty)\bigr) + G_T^+ - R_T.
\end{align*}
The third term in the final identity is the excess risk that we
have just bounded; the first term may be bounded
in the same manner that we bounded the excess risk, noting that
integration with respect to $\mu$ is simply replaced by integration with
respect to the empirical measure generated by the trajectory data which,
by assumption, is confined to the attractor $\mathcal{A}$;
the second term is simply $G_T^+.$ Thus the result follows.
\end{proof}

\begin{proof}[Proof of Theorem \ref{t:erge}]
By Assumption \ref{asp:1},
with choice of $\varphi(x) = \|m^\dag(x) - \ms_\infty(x)\|^2,$
 $\sqrt{T}G_T^+$ converges in distribution to a
scalar-valued centred Gaussian.
By Lemma \ref{lem:1} and the Slutsky
Lemma \citep{grimmett_probability_2020}, $\sqrt{T}R_T^+$ converges in distribution to a
centred Gaussian in $\mathbb{R}^p.$ By the Cramer-Wold Theorem \citep{grimmett_probability_2020}
$\sqrt{T}(R_T^+,G_T^+)$ converges in distribution to a
centred Gaussian in $\mathbb{R}^{p+1}.$

The convergence in distribution results for excess risk $R_T$ and generalization error $|G_T|$ then follow from Lemma \ref{lem:2}, under
Assumption \ref{asp:1}. Furthermore, by Lemma \ref{lem:1},
there is constant $C_1>0$ such that
$${\rm limsup}_{T \to \infty}\Bigl(\frac{T}{\log\log T}\Bigr)^{\frac12}
\|R_T^+\| \le C_1;$$
similarly, possibly by enlarging $C_1$, Assumption \ref{asp:1} gives
$${\rm limsup}_{T \to \infty}\Bigl(\frac{T}{\log\log T}\Bigr)^{\frac12}
|G_T^+| \le C_1.$$
The desired almost sure bound on $R_T +|G_T|$ follows from Lemma \ref{lem:2}.

\end{proof}

\subsection{Proof of Continuous-Time ODE Approximation Theorem (General Case)}
\label{sec:rnnproofGen}

\begin{proof}
Recall equations \eqref{eq:fgmdag2}.
By \ref{Aapprox}, for any $\delta_o > 0$
there exist dimensions $N_g$ and $N_m$ and parameterizations $\theta_g \in \R^{N_g}, \theta_m \in \R^{N_m}$
such that for any $\bigl(x,y\bigr) \in B(0, 2\rho_T)$,
and in the maximum norm,
\begin{equation*}
 \label{eq:ghapproxGen}
 \begin{aligned}
\| g^\dag(x,y) - f_2(x,y; \ \theta_g) \| &\leq \delta_o \\
\| m^\dag(x,y) - f_1(x,y; \ \theta_m) \| &\leq \delta_o.
\end{aligned}
\end{equation*}

By using these,
we can rewrite \eqref{eq:fgmdag2} as
\begin{equation}
 \begin{aligned}
 \label{eq:truewitherrorGen}
 \dot{x} &= f_0(x) + f_1(x,y; \ \theta_m) + e_x(t) \\
 \dot{y} &= f_2(x,y; \ \theta_g) + e_y(t) \\
 \end{aligned}
 \end{equation}
 where, uniformly for $\bigl(x(0),y(0)\bigr) \in B(0, \rho_0),$
\begin{equation*}
 \begin{aligned}
\sup_{t \in [0,T]} \| e_y(t) \| \leq \delta_o \\
\sup_{t \in [0,T]} \| e_x(t) \| \leq \delta_o.
\end{aligned}
\end{equation*}
By removing the bounded error terms, we obtain the approximate system:
\begin{equation}
 \begin{aligned}
 \label{eq:fullapproxGen}
 \dot{x}_\delta &= f_0(x_\delta) + f_1(x_\delta,y_\delta; \ \theta_m) \\
 \dot{y}_\delta &= f_2(x_\delta, y_\delta; \ \theta_g) \\
 \end{aligned}
 \end{equation}
Next, we obtain a stability bound on the discrepancy between the approximate system \eqref{eq:fullapproxGen} and the true system (originally written as \eqref{eq:fgmdag} and re-formulated as \eqref{eq:truewitherrorGen}).
First, let $w = (x, y)$, $w_\delta = (x_\delta, y_\delta)$ and define $F$ to be the concatenated right-hand-side of \eqref{eq:fullapproxGen}. Note that
$F$ is $L-$Lipschitz in the maximum norm on $B(0, 2\rho_T)$, for some $L$ related to the Lipschitz continuity of $f_0$, $f_1$, and $f_2$.
Then we can write the true and approximate systems, respectively, as
(using the maximum norm)
\begin{subequations}
\label{eq:ddaGen}
 \begin{align}
\dot{w} &= F(w)+e_w(t)\\
\dot{w}_\delta &= F(w_\delta),
 \end{align}
 \end{subequations}
where
$$\sup_{t \in [0,T]} \| e_w(t) \| \le \sup_{t \in [0,T]} \| e_y(t) \|
+\sup_{t \in [0,T]}\| e_x(t) \| \le 2\delta_o.$$
Let $Pw=(x,y)$.
Then, for any $t \in [0,T]$, and for all $Pw(0), Pw_{\delta}(0) \in
B(0,\rho_0)$

\begin{align*}
\| w(t) - w_\delta(t)  \|
&\leq \big \| w(0) - w_\delta(0)\big \|  + \int_0^t \big \| e_w(s)\big \| ds + \int_0^t \big \| F\big(w(s)\big) - F(w_\delta(s)\big)\big \| ds.
\end{align*}
This follows by writing \eqref{eq:ddaGen} in integrated form, subtracting
and taking norms.
Using the facts that $\big \| e_w(s) \big\| \leq 2 \delta_o$ and
$F$ is $L-$Lipschitz we obtain, for $t \in [0,T]$,
\begin{align*}
\big \| w(t) - w_\delta(t) \big \|  &\leq \big \| w(0) - w_\delta(0) \big \|  + 2 \delta_o T + L \int_0^t \big \| w(s) - w_\delta(s) \big \| ds.\\
\intertext{By the integral form of the Gronwall Lemma, it follows that
for all $t \in [0,T]$:}
\big \| w(t) - w_\delta(t) \big \|  &\leq \bigl[\| w(0) - w_\delta(0)\|  + 2 \delta_o T\bigr] \exp(Lt).
\end{align*}
Thus,
\begin{equation*}
 \begin{aligned}
 \label{eq:stabilitysupGen}
\sup_{t \in [0,T]} \big \| w(t) - w_\delta(t) \big \| \leq \bigl[ \| w(0) - w_\delta(0) \| + 2 \delta_o T \bigr] \exp (LT).
 \end{aligned}
\end{equation*}
By choice of initial conditions and $\delta_o$ sufficiently small we can
achieve a $\epsapprox>0$ approximation.
Finally, we note that the approximate system \eqref{eq:fullapproxGen}
is a function of parameter $\theta_\delta = [\theta_m, \theta_g] \in \R^{N_\delta}$ with $n_\delta = N_g + N_m.$
\end{proof}

\subsection{Proof of Continuous-Time RNN Approximation Theorem (Linear in Observation)}
\label{sec:rnnproof}

\begin{proof}
Recall equations \eqref{eq:fgmdag2}.
By approximation theory by means of
two-layer feed-forward neural networks \citep{cybenko_approximation_1989},
for any $\delta_o > 0$
there exist embedding dimensions $N_g$ and $N_h$ and parameterizations
\begin{equation*}
    \begin{aligned}
    \theta_g &= \{C_g \in \R^{d_y \times N_g},\ B_g\in \R^{N_g \times d_x},\ A_g\in \R^{N_g \times d_y},\ c_g \in \R^{N_g} \}, \\
		\theta_h &= \{C_h \in \R^{d_x \times N_h},\ B_h \in \R^{N_h \times d_x},\ A_h\in \R^{N_h \times d_y},\ c_h \in \R^{N_h} \}
  \end{aligned}
\end{equation*}
such that for any $\bigl(x,y\bigr) \in B(0, 2\rho_T)$, and in the maximum
norm,
\begin{equation*}
 \label{eq:ghapprox}
 \begin{aligned}
\| g^\dag(x,y) - C_g\sigma(B_gx + A_gy + c_g) \| &\leq \delta_o \\
\| h^\dag(x,y) - C_h\sigma(B_hx + A_hy + c_h) \| &\leq \delta_o.
\end{aligned}
\end{equation*}
Without loss of generality we may assume that $C_g$ and $C_h$ have
full rank since, if they do not, arbitrarily small changes can be
made which restore full rank.
By using these parameterizations and embedding dimensions,
we can rewrite \eqref{eq:fgmdag2} as
\begin{equation}
 \begin{aligned}
 \label{eq:truewitherror}
 \dot{x} &= f_0(x) + m \\
 \dot{y} &= C_g\sigma(B_gx + A_gy + c_g) + e_y(t) \\
 \dot{m} &= C_h\sigma(B_hx + A_hy + c_h) + e_{m^\dag}(t)
 \end{aligned}
 \end{equation}
where, uniformly for $\bigl(x(0),y(0)\bigr) \in B(0, \rho_0),$
\begin{equation*}
 \begin{aligned}
\sup_{t \in [0,T]} \| e_y(t) \| \leq \delta_o \\
\sup_{t \in [0,T]} \| e_{m^\dag}(t) \| \leq \delta_o.
\end{aligned}
\end{equation*}
By removing the bounded error terms, we obtain the approximate system:
\begin{equation}
 \begin{aligned}
 \label{eq:fullapprox}
 \dot{x}_\delta &= f_0(x_\delta) + m_\delta \\
 \dot{y}_\delta &= C_g\sigma(B_gx_\delta + A_gy_\delta + c_g) \\
 \dot{m}_\delta &=  C_h\sigma(B_hx_\delta + A_hy_\delta + c_h)
 \end{aligned}
 \end{equation}
Here $m_\delta(t)$ is initialized at $m^\dag(x(0),y(0)).$
Next, we obtain a stability bound on the discrepancy between the approximate system \eqref{eq:fullapprox} and the true system (originally written as \eqref{eq:fgmdag} and re-formulated as \eqref{eq:truewitherror}).
First, let $w = (x, y, m)$, $w_\delta = (x_\delta, y_\delta, m_\delta)$ and define $F$ to be the concatenated right-hand-side of \eqref{eq:fullapprox}. Note that
$F$ is $L-$Lipschitz in the maximum norm, for some $L$ related to the Lipschitz continuity of $f_0$, approximation parameterization $\theta_\delta$, and regularity of nonlinear activation function $\sigma$.
Then we can write the true and approximate systems, respectively, as
\begin{subequations}
\label{eq:dda}
 \begin{align}
\dot{w} &= F(w)+e_w(t)\\
\dot{w}_\delta &= F(w_\delta),
 \end{align}
 \end{subequations}

where
$$\sup_{t \in [0,T]} \| e_w(t) \| \le \sup_{t \in [0,T]} \| e_y(t) \|
+\sup_{t \in [0,T]}\| e_{m^\dag}(t) \| \le 2\delta_o.$$

Let $Pw=(x,y)$ and $P^\perp w=m;$ recall that $P^\perp w(0)$
is defined in terms of $Pw(0).$
Then, for any $t \in [0,T]$, and for all $Pw(0), Pw_{\delta}(0) \in
B(0,\rho_0)$

\begin{align*}
\| w(t) - w_\delta(t)  \|
&\leq \big \| w(0) - w_\delta(0)\big \|  + \int_0^t \big \| e_w(s)\big \| ds + \int_0^t \big \| F\big(w(s)\big) - F(w_\delta(s)\big)\big \| ds.
\end{align*}
By following the logic in \Cref{sec:rnnproofGen}, we have
%
\begin{equation*}
 \begin{aligned}
 \label{eq:stabilitysup}
\sup_{t \in [0,T]} \big \| w(t) - w_\delta(t) \big \| \leq \bigl[ \| w(0) - w_\delta(0) \| + 2 \delta_o T \bigr] \exp (LT).
 \end{aligned}
\end{equation*}
By choice of initial conditions and $\delta_o$ sufficiently small we can
achieve a $\epsapprox>0$ approximation.

Finally, we note that the approximate system \eqref{eq:fullapprox}
may be written as a recurrent neural network of form \eqref{eq:hybridRNN_cont}
as follows. Consider the equations
\begin{equation}
 \begin{aligned}
 \label{eq:fullapproxsub1}
 \dot{x}_\delta &= f_0(x_\delta) + C_hn_\delta \\
 \dot{z}_\delta &= \sigma(B_gx_\delta + A_gC_gz_\delta + c_g) \\
 \dot{n}_\delta &=  \sigma(B_hx_\delta + A_hC_gz_\delta + c_h)
 \end{aligned}
 \end{equation}
where we have defined $(z_\delta,n_\delta)$ in terms of
$(y_\delta,m_\delta)$ by $y_\delta = C_g z_\delta$
and $m_\delta = C_h n_\delta$.
Now note that \eqref{eq:fullapproxsub1} is equivalent to
\eqref{eq:hybridRNN_cont}, with recurrent state $r_{\delta}$ and
parameters $\theta_\delta$ given by:

\begin{itemize}
  \item $r_\delta = \begin{bmatrix} z_\delta \\ n_\delta \end{bmatrix}$
  \item $C_\delta = \begin{bmatrix} 0 & C_h \end{bmatrix}$
  \item $B_\delta = \begin{bmatrix} B_g \\ B_h \end{bmatrix}$
  \item $A_\delta = \begin{bmatrix} A_g C_g & 0 \\ A_h C_g & 0 \end{bmatrix}$
  \item $c_\delta = \begin{bmatrix} c_g \\ c_h \end{bmatrix}$
\end{itemize}
Any initial condition on $(y_\delta(0),m_\delta(0))$ may be achieved
by choice of initializations for $(z_\delta(0),n_\delta(0))$, since
$C_g,C_h$ are of full rank.

\end{proof}

\subsection{Random Feature Approximation}
\label{sec:rfAppendix}

Random feature methods lead to function approximation for mappings between
Hilbert spaces $X \to Y$. They operate by constructing a probability
space $(\Theta, \nu, \mathcal{F})$ with $\Theta \subseteq \R^p$ and
feature map $\varphi \colon X \times \Theta \to Y$ such that
$k(x,x') := \Expect^{\vartheta} [ \varphi(x ; \ \vartheta) \otimes \varphi(x' ; \ \vartheta)] \in \cL(Y,Y)$
forms a reproducing kernel in an associated reproducing kernel Hilbert space (RKHS) $K$.
Solutions are sought within
$\textrm{span}\{\varphi( \cdot \ ; \ \vartheta_l)\}_{l=1}^m$
where the $\{\vartheta_l\}$ are picked i.i.d. at random.
Theory supporting the approach was established in finite dimensions
by \citet{rahimi_random_2008}; the method  was recently applied
in the infinite dimensional setting in \citep{nelsen_random_2020}.

We now explain the precise random features setting adopted in
\Cref{sec:imp}, and hypothesis classes given by \eqref{eq:hc} and
\eqref{eq:hc2}.
We start with random feature functions $\varphi(\cdot \ ; \ \vartheta) \colon \R^{d_x} \to \R$,
with $\vartheta = [w, b]$,
\begin{equation}
\label{eq:rfd}
  \begin{aligned}
    w &\in \R^{d_x} \sim \mathcal{U}(-\omega, \omega)\\
    b &\in \R \sim \mathcal{U}(-\beta, \beta) \\
    \varphi(x; \ w, b) &:= \tanh(w^Tx + b),
\end{aligned}
\end{equation}

and $\omega, \beta > 0$.
We choose $D$ i.i.d. draws of $w, b$, and stack the resulting random feature functions to form the map
$\phi(x) \colon \R^{d_x} \to \R^{D}$ given by

$$\phi(x) := \begin{bmatrix} \varphi(x; \ w_1, b_w) & \dots & \varphi(x; \ w_D, b_D) \end{bmatrix}^T.$$
We define hypothesis class \eqref{eq:hc} by introducing
matrix $C \colon \R^D \to \R^{d_x}$
and seeking approximation to model error in the form
$m(x)= C \phi(x)$
by optimizing a least squares function over matrix $C$.
This does not quite correspond to the random features model with
$X=Y=\R^{d_x}$ because, when written as a linear span of vector
fields mapping $\R^{d_x}$ into itself, the vector fields are
not independent. Nonetheless we found this approach convenient
in practice and employ it in our numerics.

To align with the random features model with $X=Y=\R^{d_x}$,
we choose $D=d_x$ and draw $p$ functions $\phi(\cdot)$,
labelled as $\{f_\ell(\cdot)\}$
i.i.d. at random from the preceding construction, leading to
hypothesis class \eqref{eq:hc2}: we then seek
approximation to model error in the form
$m(x)=\sum_{\ell=1}^p \theta_\ell f_\ell(x).$ We
find this form of random features model most convenient
to explain the learning theory perspective on model error.

\subsection{Derivation of Tikhonov-Regularized Linear Inverse Problem}
\label{sec:ipderivation}
Here, we show that optimization of \eqref{eq:J_rfrc}

  $$\mathcal{J}_T(C) = \frac{1}{2T} \int_0^T \left\|\dot{x}(t) - f_0(x) - C\phi \big(x(t)\big) \right\|^2dt + \frac{\lambda}{2} \|C\|^2$$

reduces to a Tikhonov-regularized linear inverse problem.
Since \eqref{eq:Jd_rfrc} is quadratic in $C$, there exists a unique global minimizer for $C^\ast$ such that $\frac{\partial \mathcal{J}_T}{\partial C}(C^\ast) = 0$.
The minimizer $C^\ast$ satisfies:

\begin{equation*}
  \label{eq:lipd2}
      (Z + \lambda I) C^T = Y
\end{equation*}

where
\begin{equation*}
  \label{eq:ZYagain}
  \begin{aligned}
  Z &= \overline{[\phi \otimes \phi]}_T \\
  Y &= \overline{[\phi \otimes (\dot{x} - f_0)]}_T.
  \end{aligned}
\end{equation*}

and
$$[A \otimes B]_t := A(t) B^T(t)$$
$$\overline{A_T} := \frac{1}{T} \int_0^T A(t) dt$$
for $A(t) \in \R^{m \times n}, \ B(t) \in \R^{m \times l}$.

To see this, observe that

\begin{equation*}
  \begin{aligned}
    \mathcal{J}_T(C) &= \frac{1}{2T} \int_0^T \| \dot{x}(t) - f_0\big(x(t)\big) - C\phi(x(t))\|^2 dt + \frac{\lambda}{2} \|C\|^2 \\
                     &= \frac{1}{2T} \int_0^T \| \dot{x}(t) - f_0\big(x(t)\big)\|^2,\\
                      &\quad\quad\quad+ \big \langle C\phi \big(x(t)\big), C\phi \big(x(t)\big) \big \rangle
                      -2 \big \langle \dot{x}(t) - f_0\big(x(t)\big) , C \phi \big(x(t)\big) \big \rangle
                      dt
                       + \frac{\lambda}{2} \langle C , C \rangle
  \end{aligned}
\end{equation*}

and

\begin{equation*}
  \begin{aligned}
    \frac{\partial \mathcal{J}_T(C)}{\partial C}
                     &= \frac{1}{2T} \int_0^T  2 C \big[\phi \big(x(t)\big) \otimes \phi\big(x(t)\big) \big]
                      -2 \big[(\dot{x}(t) - f_0\big(x(t)\big)) \otimes \phi\big(x(t)\big)\big]
                      dt
                       + \lambda C.
  \end{aligned}
\end{equation*}

By setting the gradient to zero, we see that

\begin{equation*}
C \bigg[\frac{1}{T} \int_0^T \big[\phi\big(x(t)\big) \otimes \phi\big(x(t)\big)\big]dt + \lambda I\bigg] = \frac{1}{T} \int_0^T \big[\big(\dot{x}(t) - f_0\big(x(t)\big)\big) \otimes \phi\big(x(t)\big) \big]dt.
\end{equation*}

Finally, we can take the transpose of both sides, apply our definitions of $Y, Z$, and use symmetry of $Z$ to get

\begin{equation*}
  [Z + \lambda I] C^T = Y.
\end{equation*}

\bibliography{L96_paper}

\end{document}